\theoremstyle{definition}
\newtheorem{defn}{Definition}[subsection]
\newtheorem{ex}{Example}[section]
\newtheorem{thm}{Theorem}[subsection]
\newtheorem{prop}{Proposition}[subsection]
\newtheorem{cor}{Corollary}[subsection]
\newtheorem{lemma}{Lemma}[subsection]
\newtheorem{rmk}{Remark}[subsection]
\newcommand{\qq}{\mathbb{Q}}
\newcommand{\zz}{\mathbb{Z}}
\newcommand{\nn}{\mathbb{N}}
\newcommand\stacklongleftrightarrow[1]{%
    \mathrel{{\stackon[4pt]{$\longleftrightarrow$}{$\scriptscriptstyle#1$}}}}
\title{Mixed Dimer Configuration Model in Type $D$ Cluster Algebras}
\author{Gregg Musiker and Kayla Wright}
\date{January 23, 2023}
\begin{document}

\maketitle

\begin{abstract}
We define a combinatorial model for $F$-polynomials and $g$-vectors for type $D_n$ cluster algebras where the associated quiver is acyclic.  Our model utilizes a combination of dimer configurations and double dimer configurations which we refer to as mixed dimer configurations. In particular, we give a graph theoretic recipe that describes which monomials appear in such $F$-polynomials, as well as a graph theoretic way to determine the coefficients of each of these monomials. In addition, we prove that a weighting on our mixed dimer configuration model yields the associated $g$-vector. To prove this formula, we use a combinatorial formula due to Thao Tran \cite{tran} and  provide explicit bijections between her combinatorial model and our own.
\end{abstract}

\begin{section}{Introduction}

Positivity of the coefficients in the Laurent expansions of cluster variables was a long-running open question in the theory of cluster algebras, until it was proven around 2014 \cite{LeeSchiffler, ghkk}.  Even with positivity verified, explicit combinatorial interpretations for the Laurent expansions as generating functions has still been a question of active research for many different cluster algebras.  In the case of cluster algebras of classical type, i.e. those defined by a (valued) quiver mutation-equivalent to a Dynkin Diagram of type $A_n$, $B_n$, $C_n$, or $D_n$, there are various such interpretations in the literature.  For example, see \cite{musiker-classical, fomin2003systems, tran, yz} depending on whether the initial quiver is bipartite, acyclic, or more general.  The Caldero-Chapoton formula, as first appearing in \cite{caldero2006cluster} for ADE types, also yields positive formulas, modulo verification that the Euler characteristics of the relevant quiver Grassmannians are positive.
\allowbreak  \vspace{1em}

Furthermore, when a cluster algebra comes from a surface, there is a combinatorial interpretation of cluster variables via the language of snake graphs \cite{mswpos}.  The only cluster algebras that are both of classical type and from a surface are those of type $A_n$ or type $D_n$.  Such cluster algebras are associated to triangulations of $(n+3)$-gons or once-punctured $n$-gons, respectively.\allowbreak  \vspace{1em}

Going beyond cluster algebras of classical type or cluster algebras from surfaces, there are families of cluster algebras associated to coordinate rings of the Grassmannians \cite{scott2006grassmannians}, or more generally positroid varieties \cite{postnikov2006total, ssbw}.  For such cluster algebras, a finite subset of cluster variables (namely those parametrized by Pl\"ucker coordinates), admit combinatorial interpretations using (almost) perfect matchings of plabic graphs.  Such interpretations are often packaged together into what Postnikov called the boundary measurement map.  See \cite{lam2015dimers, marsh2016twists, muller2017twist}.  However, beginning with examples such as cluster algebras associated to the Grassmannian of $3$-planes in $6$-space, additional cluster variables lack such an interpretation.\allowbreak  \vspace{1em}

Another rich source of cluster algebras, which contain (toric) cluster variables with combinatorial interpretations also given via dimers on bipartite graphs, are those associated to the octahedron recurrence \cite{speyeroct}, Gale-Robinson sequences \cite{speyeroct,BMPW}, T-systems \cite{DiF14}, or brane tilings \cite{EF, FHK06, HV}.  The first author has in particular studied such cases in \cite{jmz, LMNT, LaiMus1, LaiMus2}.  We also note the work of \cite{Glick-Pentagram} which focused on related cluster algebras, but explicitly focused on $Y$-system and $F$-polynomial interpretations rather than expansions of cluster variables.\allowbreak  \vspace{1em}

More recent work \cite{kenpem}, also see \cite{LaiMus2}, has developed (and proven or conjectured) a double dimer configuration combinatorial interpretation for cluster variables whose Laurent expansions were previously not understood combinatorially. 
Related work appears in \cite[Sections 3-4]{lam2015dimers}, where products of exactly two (resp. three) Pl\"ucker coordinates are interpreted as Temperley-Lieb Immanants (resp. webs). See also the work of \cite{helen} that shows that partition functions of double dimers on certain graphs with nodes satisfy three-term identities. With these examples in mind, we have returned to the classical case of type $D_n$ cluster algebras arising from acyclic quivers with an eye towards a combinatorial interpretation for Laurent expansions of cluster variables that utilized a mixture of dimer configurations and double dimer configurations.  To this end, we demonstrate a combinatorial interpretation via mixed dimer configurations for $F$-polynomials in the case of acyclic quivers of type $D_n$.  Thanks to the separation of addition formulae of Fomin and Zelevinsky \cite{ca4}, the $F$-polynomials and $g$-vectors provide as much information as the Laurent expansions of the cluster variables (with principal coefficients) themselves.  We anticipate that this new interpretation will provide further insight into the conjectured mixed dimer configuration combinatorial interpretations arising in \cite{LaiMus2}, or allow the generalization of techniques using weighted decompositions of Quiver Grassmannian as in \cite{glickweyman}.\allowbreak  \vspace{1em}

We begin this paper by reviewing the basics of cluster algebras in Section \ref{sec:background}. We then describe our combinatorial model, culminating with our main theorem for $F$-polynomials in Section \ref{sec:model}.  The proof of this theorem (Theorem \ref{thm:main}) is given in Section \ref{sec:proof} by introducing Thao Tran's model from \cite{tran} and then showing the two directions of our combinatorial bijection.  In Section \ref{sec:$g$-vectors}, we complete the picture by providing an approach for reading off the associated $g$-vector from the minimal matching $M_-$ associated to $Q$ and $\underline{d}$ as part of our model.  This allows for the computation of the Laurent expansion for the associated cluster variables as well.  We end with a discussion of future directions in Section \ref{sec:future}.

\vspace{1em}

{\bf Acknowledgements:} The authors would like to thank the support of the NSF, grants
DMS-1745638 and DMS-1854162. We would also like to thank Esther Banaian, Sebastian Franco, Helen Jenne, Rick Kenyon, David Speyer, and Lauren Williams for many helpful conversations.  We additionally thank the anonymous referee whose suggestions improved our exposition.  Lastly, we would like to acknowledge the REU report from University of Minnesota's 2018 Summer REU for inspiration and insightful examples \cite{reu}.

\end{section}

\begin{section}{Background}
\label{sec:background}
\begin{subsection}{Cluster Algebra Basics}
Cluster algebras were originally defined by Fomin and Zelevinsky in \cite{fzfoundations} in order to give a combinatorial characterization of total positivity and canonical bases in algebraic groups, but have since been found to be helpful in many other areas of mathematics such as algebraic combinatorics, symplectic geometry, Teichm\"uller theory, and even some areas of physics.  Loosely speaking, a \textit{cluster algebra} is a certain commutative algebra generated by a distinguished set of generators called \textit{cluster variables}. To obtain these cluster variables, we start with an \textit{initial seed}, which consists of $n$ cluster variables as well as a quiver or an exchange matrix. Using the quiver/exchange matrix, we \textit{mutate} in the $k^\text{th}$ direction, transforming our original set of cluster variables into a new set where we change the $k^\text{th}$ cluster variable for a new expression in the rest of the variables based on the \textit{binomial exchange relation}. After mutation, we obtain a new seed containing a new set of cluster variables and we iterate mutation in all possible directions. The total set of resulting cluster variables generates the cluster algebra. \allowbreak \vspace{1em}

In \cite{ca4}, the notion of $F$-polynomials and $g$-vectors were introduced which gave a formula for cluster variables in terms of data only dependent on the initial seed. So, once the $F$-polynomial and associated $g$-vectors are computed, we can recover the Laurent polynomial expansion for the associated cluster variable. 

\begin{subsubsection}{Defining a Cluster Algebra}
In this section, we formally define a cluster algebra of geometric type following \cite{fwz}. 

\begin{defn} 
Let $J$ be a finite set and let $\text{Trop}(u_j~:~j \in J)$ be an abelian group with respect to multiplication freely generated by the elements $u_j$ for $j \in J$. Define \textbf{tropical plus}, denoted $\oplus$, via 

$$\prod_j u_j^{a_j} \oplus \prod_j u_j^{b_j} := \prod_j u_j^{\text{min}(a_j,b_j)}.$$
Call $\text{Trop}(u_j:j \in J)$ a \textbf{tropical semifield} with respect to multiplication and $\oplus$.
\end{defn}

\begin{rmk}
If $J = \emptyset$, then we obtain the trivial semifield consisting of the single element 1. The group ring of $\text{Trop}(u_j:j \in J)$ is the ring of Laurent polynomials in the variables $u_j$.
\end{rmk}

Let $m,n \in \zz_{>0}$ such that $m \geq n$ and let $\mathbb{P} = \text{Trop}(x_{n+1}, \dots, x_m)$. Let $\mathcal{F}$ be the field of fractions in $n$ algebraically independent variables with coefficients in $\qq\mathbb{P}$, the field of fractions of the group ring $\zz\mathbb{P}$. 

\begin{defn} 
A \textbf{labeled seed} in $\mathcal{F}$ is a pair $(\Tilde{x}, \Tilde{B})$ where
\begin{itemize}
    \item $\Tilde{x} = (x_1, \dots, x_n, x_{n+1}, \dots, x_m)$ is the \textbf{extended cluster} where $x_1, \dots, x_n$ are algebraically independent over $\qq\mathbb{P}$ and $\mathcal{F} = \qq \mathbb{P}(x_1, \dots, x_n)$ and 
    \item $\Tilde{B}$ is the \textbf{extended exchange matrix} i.e. an $m \times n$ matrix over $\zz$ such that the top $n \times n$ submatrix $B$ is skew-symmetrizable. That is, the top $n \times n$ submatrix $B$ can be written as a skew symmetric matrix of the form $DB$ for some $n \times n$ diagonal matrix $D$ with positive integer diagonal entries. $B$ is called the \textbf{principal part} of the exchange matrix. 
\end{itemize}
\end{defn}

\begin{rmk}
If we restrict our attention to a skew-symmetric cluster algebra of geometric type, we can replace the notion of an exchange matrix with a quiver.
\end{rmk}

\begin{defn} 
Let $1 \leq k \leq n$. An $m \times n$ matrix $\Tilde{B}'$ is obtained by \textbf{matrix mutation in the $k^{\text{th}}$ direction} from $\Tilde{B}$ if the entries of $\Tilde{B}'=(b_{ij}')$ are given by 
$$b_{ij}' = \begin{cases}
-b_{ij} &\text{if } i=k \text{ or } j=k\\
b_{ij} + \text{sgn}(b_{ik})\max(b_{ik}b_{kj},0) &\text{otherwise}
\end{cases}$$
\end{defn}
\begin{defn} 
Let $(\Tilde{x},\Tilde{B} = (b_{ij}))$ be a labeled seed in $\mathcal{F}$. \textbf{Seed mutation $\mu_k$ in the $k^{\text{th}}$ direction} transforms $(\Tilde{x},\Tilde{B})$ into the labeled seed $\mu_k(\Tilde{x},\Tilde{B}) = (\Tilde{x}',\Tilde{B}')$ where

\begin{itemize}
    \item $\Tilde{x}' = (x_1, \dots, x_k', \dots, x_n)$ where 
    $$x_k' = x_k^{-1}\left(\prod_{i=1}^m x_i^{\max(b_{ik},0)} + \prod_{i=1}^m x_i^{\max(-b_{ik},0)}\right)$$
    \item $\Tilde{B}'$ is obtained by matrix mutation in the direction $k$.
\end{itemize}
\end{defn}

\begin{rmk}
In the skew-symmetric case, there is a notion of quiver mutation that is completely graph theoretic.
\end{rmk}

\begin{defn} 
Let $\mathbb{T}_n$ be the $n$-regular tree. A \textbf{cluster pattern} is an assignment of labeled seed $(\Tilde{x}_t, \Tilde{B}_t)$ to every vertex $t \in \mathbb{T}_n$ such that if $t \xrightarrow{k} t'$, then the seeds labeled by $t$ and $t'$ are obtained from one another via mutation in the $k^{\text{th}}$ direction. Write $\Tilde{x}_t = (x_{1;t}, \dots, x_{m;t})$ and $\Tilde{B}_t = (b_{ij}^t)$.
\end{defn}

\begin{defn} 
Given a cluster pattern $\{(\Tilde{x}_t, \Tilde{B}_t)\}_{t \in \mathbb{T}_n}$, we let $\mathcal{X}$ denote the set of all \textbf{cluster variables}, i.e. 

$$\mathcal{X} := \{x_{j;t}~:~1\leq j \leq n, t \in \mathbb{T}_n\}.$$
The \textbf{cluster algebra} associated to a given cluster pattern is the $\zz\mathbb{P}$ subalgebra of $\mathcal{F}$ generated by all of the cluster variables, i.e. we set $\mathcal{A} = \zz\mathbb{P}[\mathcal{X}]$.
\end{defn}

One of the first fundamental properties proven about cluster algebras is the {\bf Laurent Phenomenon.}

\begin{thm} {\cite[Theorem 3.1]{fzfoundations}} \label{thm:LP}
For $j \in [n]$ and $t \in \mathbb{T}_n$, any cluster variable $x_{j;t} \in \mathcal{A}$ can be expressed as 
$$x_{j;t} = \frac{N(x_1, \dots, x_n)}{x_1^{d_1} \cdots x_n^{d_n}}$$ 
where $N(x_1, \dots, x_n) \in \zz[x_{n+1}^{\pm 1}, \dots, x_m^{\pm 1}][x_1, \dots, x_n]$ and is not divisible by any $x_i$. The \textbf{denominator vector} of $x_{j;t}$ is the vector $\underline{d} = (d_1, \dots, d_n)$ and the denominator vector $\underline{d}$ only depends on $B^0$, the principal part of the exchange matrix, rather than on $\Tilde{B}^0$, the extended exchange matrix.
\end{thm}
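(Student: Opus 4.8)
The plan is to reproduce the original argument of Fomin and Zelevinsky for the Laurent phenomenon \cite{fzfoundations}. Fix the initial vertex $t_0 \in \mathbb{T}_n$ with seed $(\Tilde{x}^0, \Tilde{B}^0)$, and set $L := \zz[x_{n+1}^{\pm 1}, \dots, x_m^{\pm 1}][x_1, \dots, x_n]$, a polynomial ring over a Laurent polynomial ring and therefore a unique factorization domain. The content of the theorem is to show that every cluster variable $x_{j;t}$ lies in the localization $L[x_1^{-1}, \dots, x_n^{-1}]$; once this is known, $x_{j;t}$ can be written uniquely as a Laurent monomial in $x_1, \dots, x_n$ times an element $N \in L$ not divisible by any $x_i$ --- this uses only that $L$ is a UFD whose units are $\pm$ monomials --- and $\underline d$ is then read off, with its coefficient-independence handled at the end. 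We induct on the tree distance $d(t_0, t)$, the cases $d \le 1$ being immediate from the exchange relation defining $x_k'$.

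The step requiring care is that this induction is not a direct one. If $t$ is adjacent to $t'$ via mutation at $k$, the exchange relation reads $x_{k;t}\, x_{k;t'} = P + Q$, where $P$ and $Q$ are monomials in the cluster variables of the seed at $t'$ other than $x_{k;t'}$, all of which are Laurent in $\Tilde{x}^0$ by the inductive hypothesis; but dividing the Laurent polynomial $P + Q$ by $x_{k;t'}$ may introduce a genuine non-monomial denominator, so one cannot conclude naively. The remedy is Fomin and Zelevinsky's \emph{Caterpillar Lemma}: instead of following an arbitrary geodesic one propagates Laurentness along a specially structured subtree (a caterpillar), arranged so that at each step the only thing to verify is that the two Laurent polynomials obtained by substituting the already-computed expansions into the monomials $P$ and $Q$ of the pertinent exchange relation are \emph{coprime} in $L$. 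Granting these coprimality facts, the apparent denominator produced along one leg of the caterpillar is cleared using unique factorization in $L$, and the induction closes. One can repackage the same argument through the ``upper bound'' ring $\bigcap_{t' \sim t_0} \zz\mathbb{P}[x_{1;t'}^{\pm 1}, \dots, x_{n;t'}^{\pm 1}]$ and its invariance under mutation.

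I expect the coprimality input to be the main obstacle. Concretely one must show, inside the UFD $L$, that a cluster variable shares no irreducible factor with the exchange monomials attached to an incident edge, and that the two cluster monomials $P$ and $Q$ in a single exchange relation have no common factor. This is proved by an induction interlocked with the Laurentness induction, carrying simultaneously: (i) $x_{j;t}$ is Laurent in $\Tilde{x}^0$; (ii) $x_{j;t}$ is not divisible by those initial variables $x_\ell$ relevant to the current caterpillar; and (iii) the appropriate pairs of cluster variables are coprime in $L$. Skew-symmetrizability of $B$ together with the sign pattern built into the mutation rule are precisely what make assertions (ii) and (iii) propagate from one step to the next, and this bookkeeping --- which I would take essentially verbatim from \cite{fzfoundations} --- is the only genuinely delicate part of the argument. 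The stronger statement that $N$ has nonnegative coefficients is a separate and much later theorem \cite{LeeSchiffler, ghkk}, and is not needed here.

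It remains to argue that $\underline d = (d_1, \dots, d_n)$ depends only on the principal part $B^0$ and not on $\Tilde{B}^0$. Setting the frozen variables $x_{n+1} = \dots = x_m = 1$ defines a ring homomorphism $L[x_1^{-1}, \dots, x_n^{-1}] \to \zz[x_1^{\pm 1}, \dots, x_n^{\pm 1}]$ compatible with every exchange relation, hence carrying the entire cluster pattern of $\Tilde{B}^0$ onto the coefficient-free pattern of $B^0$. It suffices to check that this specialization does not lower the pole order of $x_{j;t}$ along any hyperplane $x_i = 0$, equivalently that the numerator $N$ remains not divisible by $x_i$ after specialization. The cleanest route is through the $g$-vector and $F$-polynomial description of \cite{ca4}: the $F$-polynomial and the $g$-vector attached to $x_{j;t}$ each depend only on $B^0$, and together they determine $\underline d$, the surviving monomial of $N$ being the one encoded by the $g$-vector. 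Alternatively one tracks such a surviving coefficient-$\pm 1$ monomial of $N$ directly through the recursion, again by induction. Either way $\underline d$ agrees with the denominator vector computed in the coefficient-free algebra, which proves the claim.
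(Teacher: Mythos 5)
This statement is background quoted from \cite{fzfoundations}; the paper offers no proof of its own, relying entirely on that citation. Your sketch reproduces precisely the cited Fomin--Zelevinsky argument (caterpillar induction with the interlocked coprimality bookkeeping in the UFD $L$, then coefficient-independence of $\underline{d}$ via specialization of frozen variables together with the $F$-polynomial/$g$-vector description of \cite{ca4}), so it is correct and matches the approach the paper depends on.
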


It took another decade-and-a-half, but a related fundamental theorem is {\bf Laurent Positivity}.

\begin{thm} {\cite[Theorem 1.1]{LeeSchiffler}/\cite[Theorem 4.10]{ghkk}} \label{thm:pos}
Under the same hypotheses as Theorem \ref{thm:LP}, the polynomial $N(x_1,\dots, x_n)$, appearing in the numerator for the Laurent expansion of $x_{j;t}$, is not only in $\zz[x_{n+1}^{\pm 1}, \dots, x_m^{\pm 1}][x_1, \dots, x_n]$, but is in fact in \allowbreak
$\zz_{\geq 0}[x_{n+1}^{\pm 1}, \dots, x_m^{\pm 1}][x_1, \dots, x_n]$.
\end{thm}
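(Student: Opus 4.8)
The plan is to reduce Laurent positivity to the positivity of $F$-polynomials and then establish the latter by a combinatorial (or scattering-theoretic) count. First I would invoke the separation-of-additions formula of Fomin and Zelevinsky \cite{ca4}: writing $\hat y_i = \prod_k x_k^{b_{ki}^0}$ for the initial exchange matrix $B^0 = (b_{ki}^0)$ and letting $F_{j;t}$ be the $F$-polynomial attached to $x_{j;t}$, one has
\[
x_{j;t} \;=\; \Bigl(\prod_{i=1}^n x_i^{g_i}\Bigr)\cdot \frac{F_{j;t}(\hat y_1,\dots,\hat y_n)}{F_{j;t}|_{\mathbb{P}}(y_1,\dots,y_n)},
\]
where $(g_1,\dots,g_n)$ is the $g$-vector and the denominator is a monomial in $x_{n+1},\dots,x_m$. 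Since each $\hat y_i$ is a Laurent monomial in $x_1,\dots,x_n$, the right-hand side visibly lies in $\zz_{\ge 0}[x_{n+1}^{\pm1},\dots,x_m^{\pm1}][x_1^{\pm1},\dots,x_n^{\pm1}]$ as soon as $F_{j;t}\in\zz_{\ge 0}[y_1,\dots,y_n]$; combined with Theorem \ref{thm:LP}, which guarantees that clearing denominators produces a numerator $N$ not divisible by any $x_i$, this would force $N\in\zz_{\ge 0}[x_{n+1}^{\pm1},\dots,x_m^{\pm1}][x_1,\dots,x_n]$, which is the claim. So everything reduces to nonnegativity of the $F$-polynomials.

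Next, since $F_{j;t}$ depends only on the principal part $B^0$ and is recovered from the cluster variable of the principal-coefficient cluster algebra by setting $x_1=\dots=x_n=1$, it suffices to prove positivity of cluster variables with principal coefficients. If $B^0$ is skew-symmetrizable but not skew-symmetric, I would first fold: realize $B^0$ as the quotient of a skew-symmetric matrix under a symmetry of the associated quiver and observe that the folded $F$-polynomial is a specialization of one upstairs, hence inherits nonnegativity. This leaves the skew-symmetric — and, up to mutation, acyclic — case.

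For the skew-symmetric case I would take one of two routes. The combinatorial route, after Lee and Schiffler, attaches to $(Q,t)$ an explicit weighted combinatorial object — a band of tiles in rank two, a quiver-theoretic analogue in general — and proves that the cluster variable is the generating function of ``compatible pairs'' in that object; each such pair contributes a single monomial with coefficient $+1$, so nonnegativity is immediate once the generating-function identity is established, which one checks by verifying that it transforms correctly under the binomial exchange relations. The scattering route, after Gross--Hacking--Keel--Kontsevich, instead builds the consistent scattering diagram from the initial walls $1+z^{e_i}$, shows via the Kontsevich--Soibelman perturbative construction (together with a change-of-lattice positivity trick) that all wall-crossing exponents are nonnegative, and identifies each cluster monomial with the theta function of the corresponding chamber; theta functions are sums over broken lines whose monomial decorations multiply to positive terms, so positivity follows again.

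The hard part will be precisely the step that identifies the relevant generating function with the cluster variable: in the combinatorial approach, the induction showing that the proposed compatible-pairs formula is consistent with every exchange relation; in the scattering approach, the theorem that cluster monomials lie in the theta basis, which itself rests on the genuinely difficult positivity of the scattering diagram. By contrast, the reductions in the first two paragraphs — from Laurent positivity to $F$-polynomial positivity, and from the skew-symmetrizable to the skew-symmetric case — are essentially formal.
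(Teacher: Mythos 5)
The paper does not prove this statement at all: it is imported verbatim from Lee--Schiffler \cite{LeeSchiffler} and Gross--Hacking--Keel--Kontsevich \cite{ghkk}, and is only used (together with Theorem \ref{thm:sep_add}) to conclude positivity of $F$-polynomials. So there is no internal proof to compare against, and your proposal is in effect a roadmap of the cited literature. Your first reduction --- from positivity of the Laurent numerator $N$ to nonnegativity of $F_{j;t}$ via separation of additions, using that each $\hat y_i$ is a single Laurent monomial (in the cluster and frozen variables) and that the tropically evaluated denominator is a monomial --- is sound, and you correctly identify and defer the genuinely hard core: the compatible-pairs identity of \cite{LeeSchiffler}, or the positivity of the scattering diagram together with the theorem that cluster monomials are theta functions in \cite{ghkk}. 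As an outline of the known proofs, that much is fine.

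However, two of the steps you describe as ``essentially formal'' are not, and as stated they would fail. First, ``skew-symmetric --- and, up to mutation, acyclic'': most quivers are not mutation-equivalent to an acyclic one (the Markov quiver is the standard example), so you cannot pass to the acyclic case by mutating; Lee--Schiffler's induction is designed to run at an arbitrary skew-symmetric seed precisely for this reason. Second, the folding reduction from skew-symmetrizable to skew-symmetric presupposes that every skew-symmetrizable exchange matrix admits an unfolding, i.e.\ a skew-symmetric matrix with an admissible symmetry whose mutations are compatible with the folded mutations along every mutation sequence; the existence of such unfoldings is a nontrivial problem and not a black box one can invoke, which is why \cite{ghkk} treat the skew-symmetrizable case directly inside their fixed-data formalism rather than by folding. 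For the present paper only acyclic type $D_n$ (skew-symmetric) is needed, so these gaps do not affect how the theorem is used here, but as a proof of the statement in the generality claimed they are genuine holes rather than routine reductions.
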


\end{subsubsection} 

\begin{subsubsection}{$F$-Polynomials and $g$-Vectors}
Fix an $n \times n$ skew-symmetrizable matrix $B^0 = (b_{ij}^0)$ over $\zz$ and an initial vertex $t_0 \in \mathbb{T}_n$. Note that a choice of an initial labeled seed $(\Tilde{x}_{t_0}, \Tilde{B}^0)$ determines a cluster pattern.  Here, $\Tilde{B}^0$ is an extended exchange matrix which has $B^0$ as its principal part, and $\Tilde{x}_{t_0}$ is an extended cluster of the appropriate size.

\begin{defn} 
A cluster algebra $\mathcal{A}$ has \textbf{principal coefficients at $t_0$} if $\Tilde{x}_{t_0} = (x_1, \dots, x_{2n})$ and the exchange matrix at $t_0$ is the \textbf{principal matrix} corresponding to $B^0$ given by 
$$\Tilde{B}^{t_0} = \begin{pmatrix}
B^0\\
I
\end{pmatrix}$$
where $I$ is the $n \times n$ identity matrix. Denote this cluster algebra by $\mathcal{A}_{\bullet} = \mathcal{A}_{\bullet}(B^0,t_0)$. Using the initial cluster of $\mathcal{A}_{\bullet}$, define the new variables for $1 \leq j \leq n$

$$\hat{y_j} = x_{n+j} \prod_{i=1}^n x_i^{b_{ij}^0}.$$
\end{defn}

Using these new variables, we are ready to define the $F$-polynomial and $g$-vectors. 
\begin{defn} 
Let $1\leq \ell \leq n$, $t \in \mathbb{T}_n$. There exists a unique primitive polynomial 
$$F_{\ell;t} = F_{\ell;t}^{B^0;t_0} \in \zz[u_1, \dots, u_n]$$
and a unique vector $g_{\ell;t} = g_{\ell;t}^{B^0;t_0} = (g_1, \dots, g_n) \in \zz^n$ such that the cluster variable $x_{\ell;t} \in \mathcal{A}_{\bullet}(B^0,t_0)$ is given by 

$$x_{\ell;t} = F_{\ell;t}^{B^0;t_0}(\hat{y_1}, \dots, \hat{y_n})x_1^{g_1}\cdots x_n^{g_n}.$$
The polynomial $F_{\ell;t}$ is called an \textbf{$F$-polynomial} and $g_{\ell;t}$ is called a \textbf{$g$-vector}.
\end{defn}

The importance of $F$-polynomials and $g$-vectors is motivated by the following fundamental {\bf ``separation of additions''} property of cluster algebras of geometric type.

\begin{thm} {\cite[Theorem 3.7/Corollary 6.3]{ca4}} \label{thm:sep_add}  Given a cluster algebra $\mathcal{A}$ defined over an 
arbitrary semifield $\mathbb{P}$ with an initial labeled seed $(\Tilde{x}_{t_0}, \Tilde{B}^0)$, then all 
cluster variables in $\mathcal{A}$ can be expressed as
$$x_{i;t} = \frac{F_{i;t}^{B^0;t_0}(\hat{y_1}, \dots, \hat{y_n})x_1^{g_1}\cdots x_n^{g_n}}{F_{i;t}^{B^0,t_0}|_\mathbb{P}(y_1, \dots y_n)}.$$
In the denominator, the expression is evaluated in $\mathbb{P}$ using ordinary multiplication and $\oplus$.
\end{thm}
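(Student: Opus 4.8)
The plan is to follow the original argument of Fomin--Zelevinsky, reducing to the universal case of principal coefficients and then transferring to an arbitrary semifield $\mathbb{P}$. Concretely, I would first work inside $\mathcal{A}_\bullet(B^0,t_0) \subset \mathcal{F}_\bullet = \qq(x_1,\dots,x_{2n})$ and equip the Laurent ring $\zz[x_1^{\pm 1},\dots,x_{2n}^{\pm 1}]$ with the $\zz^n$-grading in which $\deg x_i = e_i$ for $i \le n$ and $\deg x_{n+j} = -\mathbf{b}^0_j$, the $j^{\text{th}}$ column of $B^0$. Under this grading each $\hat y_j = x_{n+j}\prod_i x_i^{b^0_{ij}}$ is homogeneous of degree $0$. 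The key claim, proved by induction on $d = d(t_0,t)$ in $\mathbb{T}_n$, is that every cluster variable $X_{\ell;t} \in \mathcal{A}_\bullet$ is homogeneous for this grading; its degree is then taken to be $g_{\ell;t}$, and the specialization $x_1 = \dots = x_n = 1$ turns $X_{\ell;t}$ into $F_{\ell;t}(\hat y_1,\dots,\hat y_n)\big|_{x_i=1}$, which defines $F_{\ell;t} \in \zz[u_1,\dots,u_n]$. The base case $t = t_0$ is immediate ($F = 1$, $g$-vector $= e_i$). For the inductive step across an edge $t' \stackrel{k}{-} t$ with $d(t_0,t') < d(t_0,t)$, one substitutes the exchange relation $x_k' = x_k^{-1}\big(\prod_i x_i^{\max(b_{ik},0)} + \prod_i x_i^{\max(-b_{ik},0)}\big)$, uses the inductively known homogeneity of the remaining variables, and checks that the two binomial terms have the same $\zz^n$-degree; this simultaneously produces the recursions for $F_{\ell;t}$ and for $g_{\ell;t}$ and establishes the principal-coefficient case of the theorem.

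Next I would pass from principal coefficients to an arbitrary semifield. Let $\mathcal{A}$ be defined over $\mathbb{P}$ with initial extended exchange matrix $\tilde B^0$ (principal part $B^0$) and initial coefficient tuple $(y_1,\dots,y_n)$, $y_j \in \mathbb{P}$. Let $\qq_{\mathrm{sf}}(u_1,\dots,u_n)$ denote the universal semifield; there is a unique semifield homomorphism $\pi\colon \qq_{\mathrm{sf}}(u_1,\dots,u_n) \to \mathbb{P}$ sending $u_j \mapsto y_j$. Comparing exchange relations term by term, one shows that the cluster variables of $\mathcal{A}$ are obtained from those of the cluster algebra with coefficient semifield $\qq_{\mathrm{sf}}(u_1,\dots,u_n)$ and the same $B^0$ by applying $\pi$ to the coefficient part; and that this latter cluster algebra has cluster variables equal to the principal-coefficient variables $X_{\ell;t}$ with $x_{n+j}$ replaced by $u_j$, divided by exactly the normalizing factor needed to render the exchange binomials coefficient-free at each step. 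That normalizing factor is, by construction, $F_{\ell;t}\big|_{\mathbb{P}}(y_1,\dots,y_n)$, evaluated using ordinary multiplication and $\oplus$. Tracking these substitutions and using that under $x_{n+j} \mapsto y_j$ one has $\hat y_j \mapsto y_j\prod_i x_i^{b^0_{ij}}$ yields the stated identity $x_{i;t} = F_{i;t}(\hat y_1,\dots,\hat y_n)\,x_1^{g_1}\cdots x_n^{g_n}\big/ F_{i;t}\big|_{\mathbb{P}}(y_1,\dots,y_n)$.

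\textbf{Main obstacle.} The substantive part is Step~1: proving that the principal-coefficient cluster variables are homogeneous and that the degrees of the two exchange monomials agree at every mutation; this is precisely the simultaneous verification of the $F$-polynomial and $g$-vector recursions, and it is where all the real content lies. Step~2 is a largely formal transfer, but it requires care because $\mathbb{P}$ is only a semifield with no subtraction, so every identity used in the transfer must be phrased via subtraction-free expressions and the universal semifield rather than through ring homomorphisms. Since this result is used only as a black box in what follows, I would in practice simply cite \cite{ca4}; the sketch above indicates the structure of that proof.
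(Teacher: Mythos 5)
The paper does not prove this statement at all: it is quoted as a background result directly from \cite{ca4} (Theorem 3.7 and Corollary 6.3 there), so citing it as a black box is exactly the paper's own treatment, and your closing remark that you would do the same is the right call. Your sketch is also a faithful outline of the Fomin--Zelevinsky argument itself -- the $\zz^n$-grading/homogeneity induction in the principal-coefficient case giving the $g$-vector and $F$-polynomial recursions, followed by the transfer to an arbitrary semifield $\mathbb{P}$ through the universal (subtraction-free) semifield, with the denominator $F_{i;t}|_{\mathbb{P}}(y_1,\dots,y_n)$ arising as the normalization factor -- so there is no gap to flag.
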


In particular, computing the patterns of $F$-polynomials $\{F_{i; t}\}_{i \in [n];~ t\in \mathbb{T}_n}$ and $g$-vectors $\{g_{i; t}\}_{i \in [n];~ t\in \mathbb{T}_n}$ is sufficient for deriving the cluster pattern $\{\Tilde{x}_t\}_{t \in \mathbb{T}_n}$ regardless of the choice of initial coefficients, i.e. the choice of initial extended exchange matrix $\Tilde{B}^0$. \allowbreak \vspace{1em}

Because of Theorems \ref{thm:pos} and \ref{thm:sep_add}, the $F$-polynomials arising from a cluster algebra have positive integer coefficients.  This observation motivates our desire to have a direct combinatorial explanation for this positivity and integrality, which motivates the combinatorial interpretation studied in this paper.
\end{subsubsection}
\end{subsection}

\begin{subsection}{Cluster Algebras and Finite Type Classification}
In our combinatorial model, we define a graph theoretic interpretation of the $F$-polynomial in ``type $D$." In particular, one of the most striking results about cluster algebras is that the classification of \textit{finite type} cluster algebras is parallel to that of the Cartan-Killing classification of complex simple Lie algebras, or equivalently of crystallographic finite root systems. Such Lie algebras are modeled by finite type Dynkin diagrams, which are built out of irreducible diagrams coming from four infinite families or from five exceptional cases: $A_n, B_n, C_n, D_n, E_6, E_7, E_8, F_4, G_2$.

\begin{defn}
We say that a cluster algebra is of \textbf{finite type} it is has finitely many seeds.
\end{defn}

\begin{defn}
Let $B$ be an $n \times n$ exchange matrix. The \textbf{diagram of $B$}, denoted $\Gamma(B)$, is the weighted directed graph on nodes $v_1, \dots, v_n$ with $v_i$ directed towards $v_j$ if and only if $b_{ij} > 0$ and weighted by $|b_{ij}b_{ji}|$.
\end{defn}

\begin{thm} \cite{ca2}
The cluster algebra $\mathcal{A}$ is of finite type if and only if it has a seed $(x,B)$ such that $\Gamma(B)$ is an orientation of a finite type Dynkin diagram. 
\end{thm}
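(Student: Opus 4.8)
The plan is to reduce both implications to a purely combinatorial statement about the diagram $\Gamma(B)$ and its behavior under mutation. First I would note that, by the separation of additions (Theorem \ref{thm:sep_add}), the number of distinct seeds of $\mathcal{A}$ depends only on the mutation class of the principal part $B^0$ — the choice of coefficient system is irrelevant — and that diagram mutation is the shadow of matrix/quiver mutation, so ``$\mathcal{A}$ has a seed $(x,B)$ with $\Gamma(B)$ an orientation of a finite type Dynkin diagram'' is a property of the mutation class alone. It is convenient to record that $\Gamma(B)$ is such an orientation precisely when the \emph{Cartan counterpart} $A(B)$ (with $a_{ii}=2$ and $a_{ij}=-|b_{ij}|$ for $i\neq j$) is a Cartan matrix of finite type. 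So the theorem becomes: the mutation class of $B^0$ contains a matrix with finite-type Cartan counterpart if and only if there are finitely many seeds.

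For the base case I would treat rank $2$ directly: for $B=\left(\begin{smallmatrix}0 & b\\ -c & 0\end{smallmatrix}\right)$ with $b,c>0$, iterating the exchange relations produces a sequence of cluster variables governed by a two-term recurrence that is eventually periodic exactly when $bc\le 3$, i.e.\ exactly when $\Gamma(B)$ is one of $A_1\times A_1$ ($bc=0$), $A_2$ ($bc=1$), $B_2$ ($bc=2$), or $G_2$ ($bc=3$) — the rank-$2$ Dynkin diagrams — and otherwise the seeds are infinitely many and pairwise distinct. This also furnishes the basic obstruction to finiteness in higher rank: a single edge of weight $\ge 4$.

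Next, the ``if'' direction. Given that $\Gamma(B)$ is an orientation of a Dynkin diagram, I would first reduce to a single, conveniently chosen orientation per type: since a Dynkin diagram is a tree, any two orientations of it are connected by a sequence of mutations at sources and sinks (each such mutation merely reverses the arrows incident to that vertex and preserves the underlying weighted graph), so one may pass to the bipartite orientation. For simply-laced types $A_n, D_n, E_n$ one then identifies the cluster variables, via their denominator vectors, with the almost positive roots $\Phi_{\ge -1}$ of the corresponding root system — of which there are $n+|\Phi_{>0}|<\infty$ — using the compatibility degree and the resulting combinatorial description of the cluster complex; for $B_n, C_n, F_4, G_2$ one either folds from the simply-laced case or argues directly. (Alternatively, for each classical type one may invoke the explicit combinatorial models in the literature, including the type-$D$ model developed in this paper, to bound the count directly.) In all cases there are finitely many seeds.

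The ``only if'' direction is where essentially all the difficulty lies. Suppose $\mathcal{A}$ is of finite type. One shows that every diagram mutation-equivalent to $\Gamma(B^0)$ avoids all ``minimal infinite type'' diagrams as induced subdiagrams, where a diagram is minimal of infinite type if it yields infinitely many seeds but every proper induced subdiagram does not: if such a subdiagram appeared, restricting the cluster pattern to its vertex set would already produce infinitely many distinct seeds, contradicting finiteness. The crux is then twofold: (i) classify the minimal infinite type diagrams — these turn out to be a short, explicit list consisting of the affine Dynkin diagrams $\widetilde{A}_n, \widetilde{B}_n, \widetilde{C}_n, \widetilde{D}_n, \widetilde{E}_{6,7,8}, \widetilde{F}_4, \widetilde{G}_2$ in specific orientations, together with a handful of additional small diagrams — and (ii) prove that any connected diagram all of whose mutation images avoid this list is in fact mutation-equivalent to an orientation of a finite type Dynkin diagram. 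Step (ii) is a mutation-equivariant analogue of the Cartan--Killing classification, carried out by induction on the number of nodes via a delicate local analysis (bounding vertex degrees, eliminating cycles other than the special oriented $3$-cycles that can be mutated away, and repeatedly using that the presence of an affine sub-pattern anywhere in the mutation class is forbidden). This combinatorial case analysis — replacing ``subgraph containment'' by ``appears somewhere in the mutation class'' throughout the usual argument — is the main obstacle, and it is where the bulk of \cite{ca2} is spent.
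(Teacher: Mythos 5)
There is nothing to compare against inside the paper: this statement is imported verbatim from Fomin--Zelevinsky's finite type classification and is cited to \cite{ca2} without proof, so the only meaningful comparison is with the proof in that reference. Measured against that, your write-up is a faithful roadmap of the known argument --- the rank-$2$ computation ($bc\le 3$), the reduction of the ``if'' direction to a single (bipartite) orientation by sink/source mutations followed by the parametrization of cluster variables by almost positive roots, and the ``only if'' direction via avoidance of minimal infinite-type subdiagrams in the whole mutation class together with a mutation-equivariant Cartan--Killing induction --- but it is a roadmap, not a proof. The two steps you yourself identify as the crux, (i) the classification of the minimal infinite-type diagrams and (ii) the induction showing that a connected diagram whose entire mutation class avoids them is mutation-equivalent to a Dynkin orientation, are only named; they are where essentially all of the content of \cite{ca2} lives, and nothing in your sketch discharges them. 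Likewise, the ``if'' direction as you state it leans on the identification of cluster variables with almost positive roots via denominator vectors and the cluster complex, which is itself one of the main theorems proved in \cite{ca2}, so invoking it is closer to citing the classification machinery than to proving it.

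Two smaller cautions. First, dismissing the coefficient dependence via separation of additions (Theorem \ref{thm:sep_add}, from \cite{ca4}) is logically admissible but anachronistic; \cite{ca2} handles the independence of finite type from the choice of coefficients by a direct argument, and if you use \cite{ca4} you should also say why finitely many cluster variables (or finitely many exchange matrices) yields finitely many \emph{seeds}, which is not completely automatic. Second, the parenthetical suggestion to bound the count using ``the type-$D$ model developed in this paper'' should be dropped: the model here takes as input the bijection between denominator vectors and positive roots (Theorem 1.9 of \cite{ca2}) and Tran's finite-type formulas, so using it in a proof of the classification is circular in spirit.
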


If the conditions of the above theorem hold, $\mathcal{A}$ is of finite type $X$ where $X$ is one of $A_n, B_n, C_n, D_n, E_6, E_7, E_8, F_4, G_2$, as determined by the Dynkin diagram.

\begin{subsubsection}{$F$-polynomials and Positive Roots}
In order to show the connection of $F$-polynomials to types of a cluster algebra, we have the following setup. Suppose that $B^0$ is an acyclic $n \times n$ exchange matrix of type $A_n, B_n, C_n,$ or $D_n$ i.e. $b_{ij} = \pm a_{ij}$ where $(a_{ij})$ is the Cartan matrix of the corresponding type. Let $\mathcal{A}(B^0,t_0)$ be any cluster algebra which has the extended cluster $(x_1, \dots, x_n, x_{n+1},\dots, x_m)$.  By the Laurent phenomenon, Theorem \ref{thm:LP}, we can associate a denominator vector $\underline{d}$ to each cluster variable $x_{i;t}$ in a given cluster algebra.  For acyclic finite type cluster algebras, such denominator vectors can be described concisely as follows.

\begin{thm} {\cite[Theorem 1.9]{ca2}}
Suppose $\mathcal{A}(B^0,t_0)$ is defined as above, i.e. $B^0 = [b_{ij}]$, where each $b_{ij} = \pm a_{ij}$ and $(a_{ij})$ is the Cartan matrix of a finite root system $\Phi$.  Equivalently, the diagram (resp. quiver) defining $\mathcal{A}(B^0,t_0)$ is an orientation of a (simply-laced) finite type Dynkin diagram. Let $D$ denote the set of all denominator vectors of cluster variables in $\mathcal{A}_{\bullet}(B^0,t_0)$ which do not occur in the initial cluster. Let 
$S = \{\alpha_1, \dots, \alpha_n\}$ be the set of simple roots of the corresponding root system $\Phi$ and let $\Phi_+$ be the set of positive roots of $\Phi$. There is a bijective correspondence between these cluster variables in $\mathcal{A}$ and the positive roots in $\Phi_+$. More specifically, if a cluster variable corresponds to $\alpha = \sum d_i \alpha_i$, then the denominator vector of the cluster variable is $(d_1, \dots, d_n) \in \zz_{\geq 0}^n$.
\end{thm}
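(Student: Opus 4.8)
The plan is to follow the strategy of Fomin and Zelevinsky underlying this result: reduce to the case of a \emph{bipartite} seed, track how denominator vectors evolve under mutation, and match that evolution with a piecewise-linear model of the root system $\Phi$. For the reduction, note that every simply-laced Dynkin diagram ($A_n$, $D_n$, $E_6$, $E_7$, $E_8$) is a tree, so any acyclic orientation of it is connected to the bipartite orientation by a sequence of moves that reverse all arrows at a source or a sink; on the level of seeds these are mutations at a source/sink. There is a well-understood piecewise-linear rule describing how the denominator vector of a cluster variable relative to one such seed compares with its denominator vector relative to the next (obtained by tropicalizing the exchange relation), and applying it repeatedly shows it is enough to prove the statement for the bipartite seed.

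So take $B^0$ bipartite, with vertices partitioned as $I_+ \sqcup I_-$ so that every arrow runs between the two parts, and consider the bipartite belt obtained by alternately applying $\mu_+ = \prod_{i \in I_+}\mu_i$ and $\mu_- = \prod_{j \in I_-}\mu_j$. By the classification theorem quoted above the seed set is finite, so the belt is periodic and produces every cluster variable. By the Laurent phenomenon (Theorem~\ref{thm:LP}) each cluster variable has a well-defined denominator vector, recorded for the initial variable $x_i$ as $-e_i$, i.e.\ we set $x_i \leftrightarrow -\alpha_i$. Writing each exchange as $x_k' = x_k^{-1}(M_+ + M_-)$ and expanding $M_\pm$ as Laurent polynomials in the initial cluster, one shows that one step of the belt changes the tuple of denominator vectors according to the ``max-plus'' tropicalization of that step. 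This tropicalized recursion is exactly the one defining the orbit of $\{-\alpha_1, \dots, -\alpha_n\}$ under the truncated Coxeter maps $\tau_+, \tau_-$ of Fomin--Zelevinsky, whose orbit is known to be the set $\Phi_{\ge -1}$ of almost positive roots, each attained exactly once. Removing the $n$ initial variables (the $-\alpha_i$) leaves a bijection between the remaining cluster variables and $\Phi_{>0}$ sending a cluster variable with denominator vector $(d_1, \dots, d_n)$ to $\sum_i d_i \alpha_i$, which is precisely the assertion; in particular the number of non-initial cluster variables equals $|\Phi_{>0}|$.

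I expect the main obstacle to be controlling cancellation. The denominator coming from the Laurent phenomenon is a priori only an upper bound for what the tropical recursion predicts, so one must rule out that $M_+ + M_-$ shares a factor $x_i$ with the relevant power of $x_k$ (which would make the true denominator properly smaller than the predicted positive root), and one must also show that distinct cluster variables never yield the same denominator vector. I would handle both as Fomin--Zelevinsky do, by an induction on distance in $\mathbb{T}_n$ whose hypothesis carries, alongside the denominator identity, a coprimality (primitivity) statement for the monomials entering each exchange; Laurent positivity (Theorem~\ref{thm:pos}) is used to pin down the extremal exponents and prevent unexpected collapse, and the explicit finite-type root combinatorics supplies the base cases and the injectivity at the level of roots.
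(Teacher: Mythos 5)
The paper itself does not prove this statement; it is quoted from Fomin--Zelevinsky \cite{ca2}, so there is no internal argument to compare against. Your sketch follows the strategy of that original source: reduce to a bipartite initial seed, run the bipartite belt, identify the tropicalized exchange recursion for denominator vectors with the truncated reflections $\tau_\pm$ acting on the almost positive roots $\Phi_{\geq -1}$, and control cancellation by an induction carrying a primitivity/coprimality statement. That is the right route (the appeal to Laurent positivity is a convenient anachronism --- \cite{ca2} gets by with explicit coprimality lemmas --- but it is legitimate).

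There is one genuine gap in the justification as written: you assert that since the seed set is finite, the bipartite belt ``is periodic and produces every cluster variable.'' Finiteness gives periodicity, but not exhaustiveness: the belt only mutates alternately at the two bipartite halves, so a priori it could visit a proper subset of the seeds and miss cluster variables. The fact that every non-initial cluster variable occurs on the belt (equivalently, that the variables $x[\alpha]$, $\alpha \in \Phi_{>0}$, exhaust them) is itself a substantive part of the argument in \cite{ca2}, established there by a separate structural argument --- one must show, e.g., that every cluster variable has denominator vector in $\Phi_{\geq -1}$ and that the resulting assignment is injective, or that the family of clusters generated from the belt is stable under arbitrary mutation so that connectedness of $\mathbb{T}_n$ forces every seed into this family --- rather than deduced from finiteness alone. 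Relatedly, the ``well-understood piecewise-linear rule'' for how denominator vectors transform when the initial seed is mutated at a source or sink conceals exactly the no-cancellation issue you flag at the end; it is true in finite type but needs the same primitivity induction, not mere tropicalization of the exchange relation. With those two points supplied, your outline matches the original proof.
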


Recall that we are working with type $D_n$ cluster algebras, so we focus on the associated root system of type $D_n$. Namely, let $\Phi_+$ be the set of positive roots given by
$$\Phi_+ = \{e_i + \dots + e_j~:~0 \leq i \leq j \leq n-1\} \cup \{e_i + \dots + e_{n-3} + e_{n-1}~:~0 \leq i \leq n-3\} \cup$$
$$\cup \{e_i + \dots + e_{j-1} + 2e_j + \dots + 2e_{n-3} + e_{n-2} + e_{n-1}~:~ 0 \leq i < j \leq n-3\}$$

The data of a positive root $\underline{d} = (d_0, \dots, d_{n-1}) \in \Phi_+$ will be included in our definitions for our graph theoretic interpretation of the $F$-polynomial. 
\end{subsubsection}
\end{subsection}
\end{section}

\begin{section}{Our Model} \label{sec:model}
In this section, we describe our combinatorial model for how to obtain the $F$-polynomial associated to a type $D_n$ cluster algebra where the associated quiver is acyclic. Our model assigns a graph comprised of one hexagon and $n-1$ squares to a given quiver where we attach these tiles based on the orientation of each arrow in the quiver. Relying on a bijection between denominator vectors of cluster variables and positive roots in a type $D_n$ root system, we use the data of the quiver $Q$ and a positive root $\underline{d}$ to assign a mixed dimer configuration to this graph. From this assignment, we create a poset of mixed dimer configurations each of which corresponds to a monomial of the $F$-polynomial associated to $\underline{d}$ and whose coefficient is given by the number of cycles appearing in the mixed dimer configuration. 

\begin{subsection}{Type $A$ Case: Snake Graphs}

\label{sec:snake_A}

We model our approach off of the snake graphs for type $A_n$ cluster algebras.  This can be viewed as a special case of positive formulas for cluster variables associated to arcs in (unpunctured) surfaces \cite{ms, mswpos}.  We also note \cite[Section 2-4]{markoff-comb} describing unpublished work of Carroll-Price during an REU advised by Jim Propp.\allowbreak  \vspace{1em}

In particular, for an acyclic type $A_n$ quiver, i.e. an orientation of the line segment on $n$ vertices, we can build a bipartite planar graph by adjoining square tiles together.  Inductively, an arrow $i \to (i+1)$ or $i \leftarrow (i+1)$ corresponds to adding a tile to the current snake graph to the North or East of the preceding tile, as described by the following rule: when two consecutive arrows are in the same direction, i.e. $(i-1) \to i \to (i+1)$ or $(i-1) \leftarrow i \leftarrow (i+1)$, the corresponding tiles are placed so that they zig-zag (i.e. North then East, or East then North).  On the other hand, when two consecutive arrows are in opposite directions, the corresponding tiles are placed straight (i.e. North then North, or East then East).  The first step of the snake graph (i.e. the placement of the second tile) may be taken arbitrarily although we conventionally take this as a step to the East, without loss of generality.
\allowbreak  \vspace{1em}

Furthermore, we can color the vertices black and white so that for the edge at the border of tiles $i$ and $(i+1)$, if we draw the arrow $i\to (i+1)$ or $i \leftarrow (i+1)$ perpendicular to this edge, then the white vertex appears on the right.  \allowbreak  \vspace{1em}

\begin{ex}\label{ex:snake}
Consider the $A_5$ quiver given by

$$Q~~ = ~~ 0 \longrightarrow 1 \longleftarrow 2 \longleftarrow 3 \longrightarrow 4$$

By the rules described above, this orientation of the $A_5$ quiver yields the following snake graph on 5 square tiles.

\begin{center}
    \includegraphics[scale=.2]{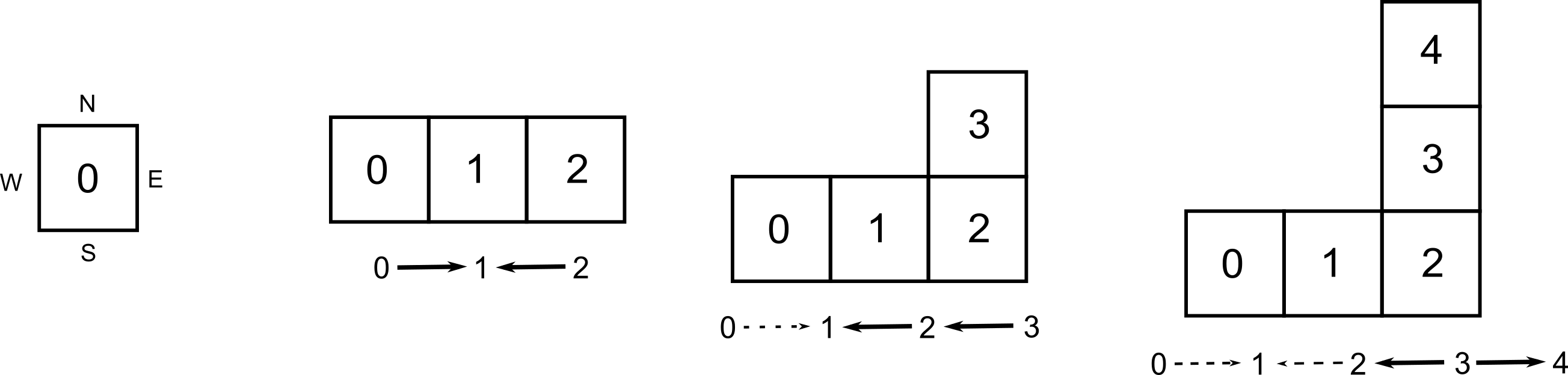}
\end{center}
Note the diagram is split up by the inductive process of stacking tiles on the North or East edge of the previous tile. The subquivers are placed below each snake graph where the arrows are bolded in the quiver that dictate the inductive rule for stacking. 
\end{ex}
\end{subsection}

In the ensuing subsection, we extend this procedure to acyclic quivers of type $D_n$, which leads to a single hexagonal tile in addition to the usual square tiles.

\begin{subsection}{Obtaining a Hexagon-Square Graph from a Quiver} \label{sec:base_graph}
Consider the Dynkin diagram of type $D_n$ where we agree to associate the labels $0,1, \dots, n-1$ to the vertices of the diagram such that the unique degree 3 vertex is labeled $n-3$.

\begin{center}
\begin{tikzpicture}
\node at (-2,0) (0){$0$};
\node at (-1,0) (1){$1$};
\node at (0,0) (2){$2$};
\node at (1.5,0) (dots){$\dots$};
\node at (3,0) (n-4){$n-4$};
\node at (5,0) (n-3){$n-3$};
\node at (7,1) (n-2){$n-2$};
\node at (7,-1) (n-1){$n-1$};
\draw[-, thick] (0) -- (1);
\draw[-, thick] (1) -- (2);
\draw[-, thick] (2) -- (dots);
\draw[-, thick] (dots) -- (n-4);
\draw[-, thick] (n-4) -- (n-3);
\draw[-, thick] (n-3) -- (n-2);
\draw[-, thick] (n-3) -- (n-1);
\end{tikzpicture}
\end{center}

We turn this Dynkin diagram into an acyclic quiver of type $D_n$ by assigning an orientation to each edge of the diagram. We now associate a graph to $Q$ we call the \textbf{base graph}, by associating either a square tile or hexagon tile to each vertex in $Q$. Then, we attach these tiles based on rules dictated by the orientation of the arrow connecting the vertices \cite{reu}. \allowbreak  \vspace{1em}

The unique trivalent vertex labeled $n-3$ corresponds to the unique hexagon tile in our graph. The rest of the vertices in $Q$ are assigned square tiles. For the case when $n=4$, all vertices are incident to the trivalent vertex $n-3$, so we attach the square tiles to the hexagon based on the orientation of the arrow in $Q$. Note that the base graph, by construction, is a planar bipartite graph. Hence, there are two choices for a black and white coloring on the vertices of the base graph. We adopt the convention that if $i \to j$ in the quiver $Q$, we ``see white on the right." For example, the $8$ possible acyclic quivers of type $D_4$ are illustrated superimposed below, with the associated base graphs similarly superimposed. 

\begin{center}
    \includegraphics[scale=.25]{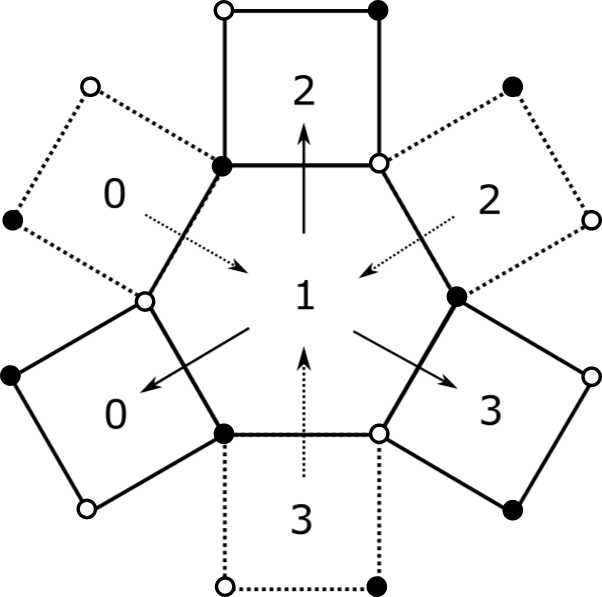}
\end{center}

Using the fact, for $n\geq 4$, that an acyclic $D_n$ quiver contains an acyclic $A_{n-3}$ subquiver induced on vertices $\{0,1,2,\dots, n-4\}$, we build the usual type $A$ snake graph via square tiles, and include this snake graph as an induced subgraph of our base graph.\allowbreak \vspace{1em}

From this point forward, when we refer to the base graph, we implicitly take it with this bipartite coloring. When $n\geq 5$, to attach the remaining tiles, we adopt the type $A$ snake graph convention as described in Section \ref{sec:snake_A}. Below we illustrate some of the possible base graphs for $n=5$ and $6$.  To limit the number of cases, we omit the square tiles associated to $(n-2)$ and $(n-1)$, and assume without loss of generality that the quiver contains the arrow $(n-3) \to (n-4)$ rather than its opposite.
\vspace{1em}

\begin{center}
    \includegraphics[scale=.35]{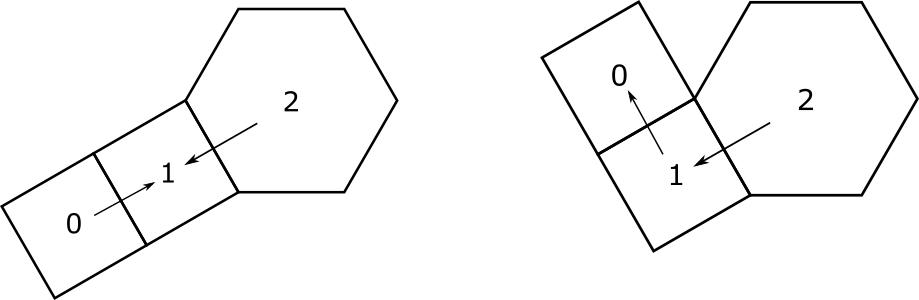}
\end{center}

\begin{center}
    \includegraphics[scale=.35]{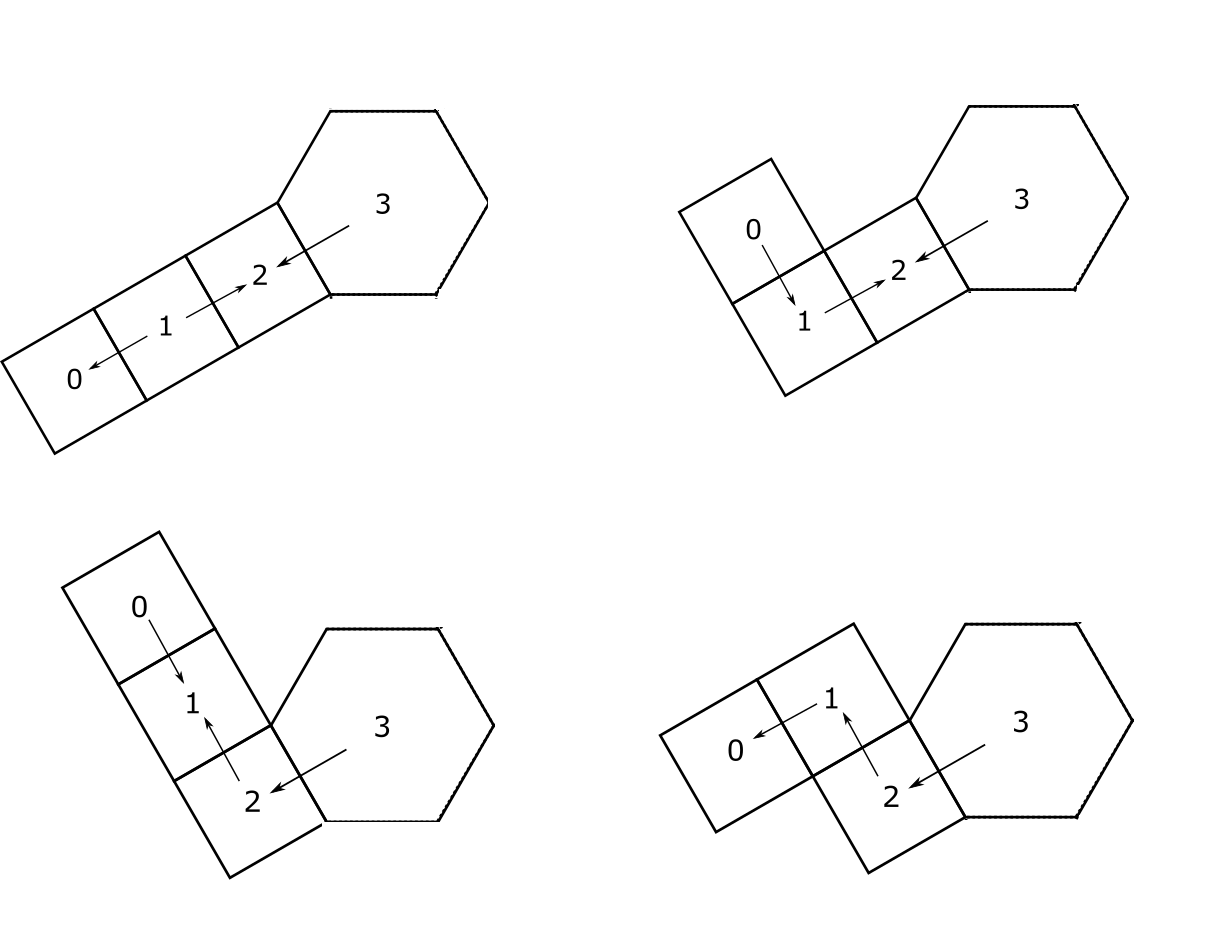}
\end{center}

\begin{defn}
Given a quiver $Q$ of type $D_n$ where the associated quiver is acyclic, the hexagon-square of $Q$ obtained from the above process is called the \textbf{base graph} of $Q$.
\end{defn}

\begin{ex}
Suppose we have the following $D_7$ quiver:

\begin{center}
\begin{tikzpicture}
\node at (-2,0) {$Q=$};
\node at (-1,0) (0){$0$};
\node at (0.5,0) (1){$1$};
\node at (2,0) (2){$2$};
\node at (3.5, 0) (3){$3$};
\node at (5,0) (4){$4$};
\node at (6.5,1) (5){$5$};
\node at (6.5,-1) (6){$6$};
\draw[->, thick] (0) -- (1);
\draw[->, thick] (2) -- (1);
\draw[->, thick] (3) -- (2);
\draw[->, thick] (3) -- (4);
\draw[->, thick] (5) -- (4);
\draw[->, thick] (4) -- (6);
\end{tikzpicture}
\end{center}

By definition of the base graph, mimicking the construction in Example \ref{ex:snake}, the associated hexagon-square graph is:

\begin{center}
    \includegraphics[scale=.3]{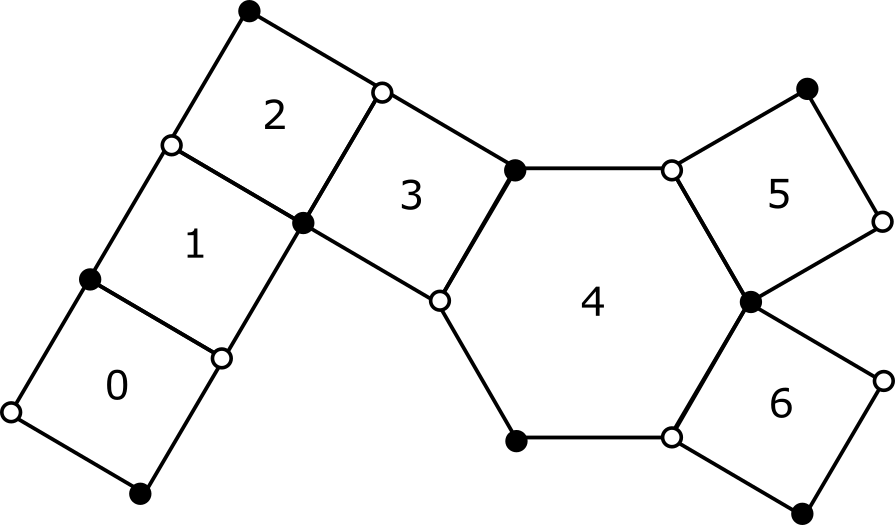}
\end{center}

\end{ex}

\end{subsection}
\begin{subsection}{Associating a Minimal Mixed Dimer Configuration to the Base Graph}
For the following definitions, suppose that $G$ is a planar bipartite graph.

\begin{defn}
 A \textbf{dimer configuration} (also known as a matching) is a subset $D \subset E(G)$ such that every vertex $v \in V(G)$ is contained in exactly one edge $e \in D$.
\end{defn}
\noindent For example, consider the red edges in the following graph:

\begin{center}
    \includegraphics[scale=.25]{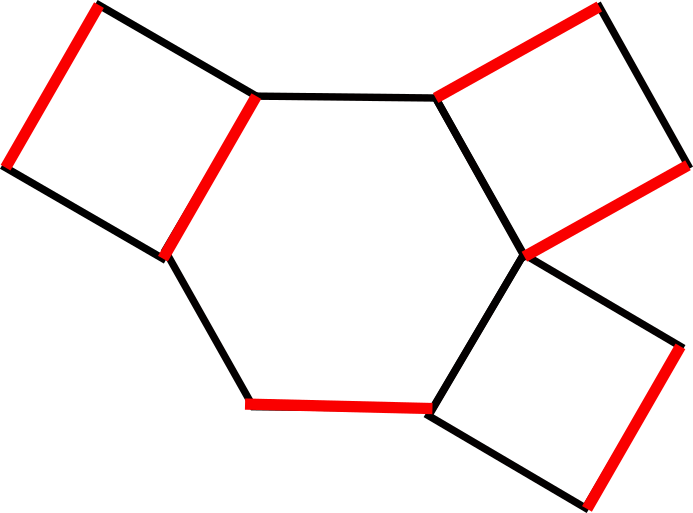}
\end{center}
 
\begin{defn}
A \textbf{double dimer configuration} $D'$ of $G$ is a multiset of the edges of $G$ such that every vertex $v \in V(G)$ is contained in exactly two edges $e,e' \in D'$.
\end{defn}
\noindent For example, consider the red edges in the following graph:

\begin{center}
    \includegraphics[scale=.25]{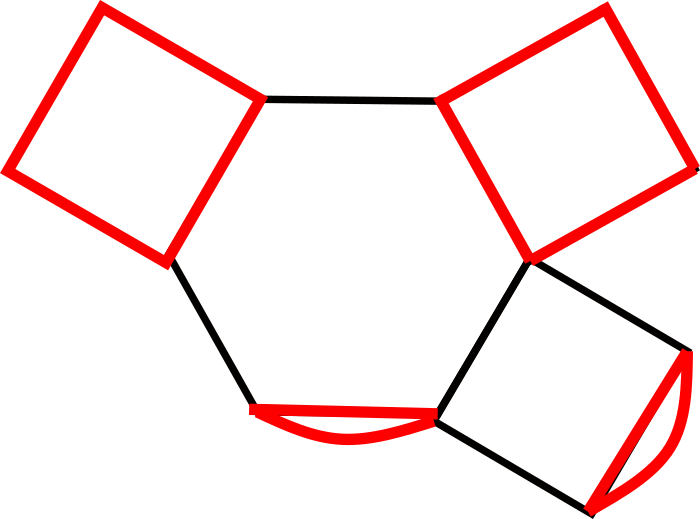}
\end{center}

Now, we aim to associate a ``mixed dimer configuration'' to the base graph obtained in the previous section which, as the name suggests, is a mixture between a dimer configuration and a double dimer configuration.  For example, consider the red edges in the following graph:

\begin{center}
\includegraphics[scale=.2]{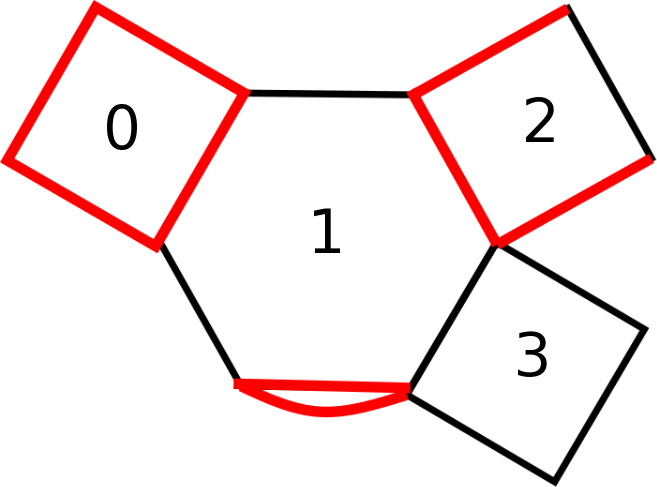}
\end{center}

In order to define mixed dimer configurations that satisfy a certain ``valency condition", we need to introduce another piece of global data other than the quiver $Q$.

\begin{defn} \label{def:mixed_dimer}
Let $\underline{d}$ be an $n$-tuple whose entries are each $0$, $1$, or $2$, i.e. $\underline{d} \in \{0,1,2\}^n$. A \textbf{mixed dimer configuration} $M$ of $G$ is a multiset of the edges of $G$ such that every vertex $v \in V(G)$ is contained in zero, one, or two edges in $M$. Furthermore, we say that $M$ satisfies the {\bf valence condition} with respect to $\underline{d}$ if
\begin{itemize}
    \item Each vertex incident to a tile labeled $i$ with $d_i = 2$ is contained in two edges in $M$.
    \item Each vertex incident to a tile labeled $i$ with $d_i = 1$ is contained in at least one edge in $M$.
\end{itemize}

For example, in the above toy example of a mixed dimer configuration, the associated $4$-tuple would be $\underline{d} = (2,2,1,0)$. Motivated by denominator vectors of cluster variables for cluster algebras of type $D_n$, i.e. of finite type, we focus on the special case where $\underline{d}$ is a positive root of the root system of type $D_n$, which we denote as $\Phi$. 
\end{defn}

\begin{subsubsection}{Defining the minimal mixed dimer configuration $M_-$}
\label{sec:min_match}
Fix some type $D_n$ quiver $Q$ where the associated quiver is acyclic and let $\underline{d} \in \Phi_+$. Let $G$ be the associated base graph with fixed bipartite coloring. For all the entries $d_i \geq 1$, consider the induced subgraph $G_1 \subset G$ using the associated tiles $i$ for $0 \leq i \leq n-1$. Traversing around the boundary of $G_1$ clockwise, distinguish the edges that go from black to white in the bipartite coloring which should notably result in a dimer configuration of $G_1$, call it $M_1$. \allowbreak  \vspace{.5em}

We remark that this is well-defined because the subgraph $G_1 \subset G$ is connected due to the structure of the root $\underline{d} \in \Phi_+$, i.e. all of the positive entries appear such that the tiles in $G$ associated to them are neighboring.\allowbreak  \vspace{1em}

Next, for all the entries $d_j = 2$, consider the induced subgraph $G_2 \subset G_1 \subset G$ using the associated tiles $j$ for $0 \leq j \leq n-1$. Going around the boundary of $G_2$ clockwise, distinguish the edges that go from black to white in the bipartite coloring which should notably result in a dimer configuration of $G_2$, call it $M_2$. \allowbreak  \vspace{1em}

Again, we remark that this is well-defined because the subgraph $G_2 \subset G$ is connected due to the structure of the root $\underline{d} \in \Phi_+$.\allowbreak  \vspace{1em}

Taking the superimposition of $M_1$ and $M_2$, we have some multiset of the edges of our base graph $G$. At this stage, the valence\footnote{
Here, and from now on, when we refer to the ``valence" of a vertex, we refer to the valence with respect to a mixed dimer configuration rather than the valence of the vertex in the base graph.\allowbreak} of each vertex in either 0, 1, or 2 in $M_1 \sqcup M_2$. In particular, for every vertex $v \in G$, we insist the valence of $v$ in $M_1 \sqcup M_2$ is the maximal value of $d_i$, for $i$ in the set of labels for tiles incident to vertex $v$. In particular, the mixed dimer configuration $M_1 \sqcup M_2$ constructed in this way satisfies the valence condition with respect to $\underline{d}$, as defined in Definition \ref{def:mixed_dimer}.

\begin{defn}
The \textbf{minimal mixed dimer configuration} associated to the bipartite planar base graph $G$ and to the type $D_n$ positive root $\underline{d} = (d_0, \dots, d_{n-1}) \in \Phi_+$ is given by 
$$M_-(Q,\underline{d}) := M_1 \sqcup M_2.$$

For brevity, we denote the minimal mixed dimer configuration by $M_- := M_-(Q,\underline{d})$. Sometimes we will instead use the term {\bf minimal matching} for $M_-$. We illustrate this process via an example.
\end{defn}

\begin{ex}
Consider the quiver in Example 2.1 and take the positive root\newline
$\underline{d} = (0,1,1,2,2,1,1)$. Then, the graph $G_1$ will consist of all the tiles except the tile 0, as $d_0 = 0$. When enumerating the boundary edges of $G_1$ oriented black to white clockwise, we obtain $M_1$:

\begin{center}
    \includegraphics[scale=.25]{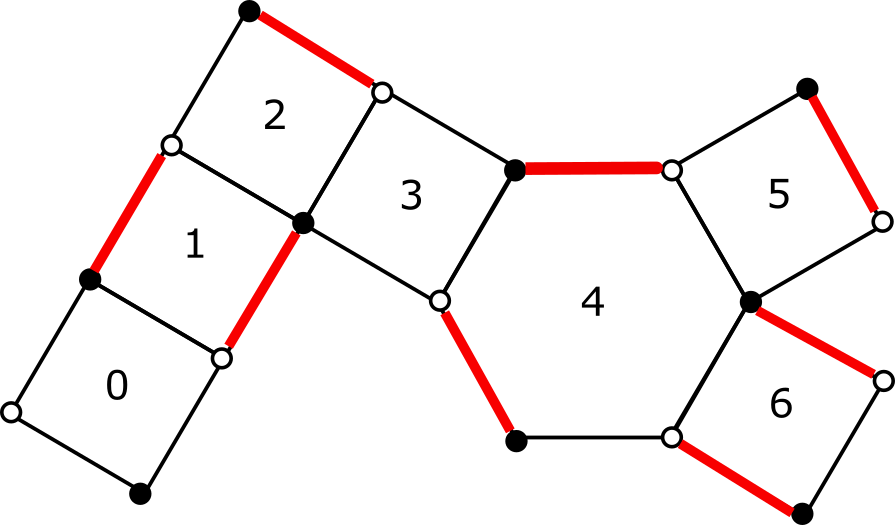}
\end{center}

Since $d_3=d_4=2$, we have that the graph $G_2$ contains tiles 3 and 4. So, we distinguish a double dimer configuration on these tiles by traversing the boundary of tiles 3 and 4 and enumerating the edges that are oriented black to white clockwise. If one of these edges was distinguished in the previous step, distinguish it again with a doubled edge. We obtain $M_-(Q,d)=M_1 \sqcup M_2$:

\begin{center}
    \includegraphics[scale=.25]{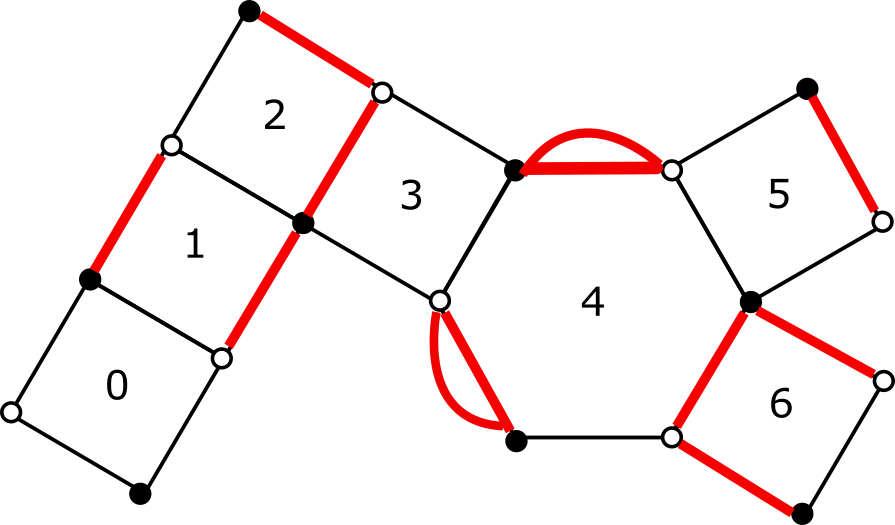}
\end{center}

This is the minimal matching associated to $Q$ and $\underline{d}$.
\end{ex}
\end{subsubsection}
\end{subsection}

\begin{subsection}{Poset of Mixed Dimer Configurations}
\label{sec:poset}
We now create the poset of mixed dimer configurations where each of the mixed dimer configurations appearing in this poset corresponds to a monomial that appears in the $F$-polynomial associated to $\underline{d}$. In order to define this poset, we first define a larger poset, and then refine its set of elements to obtain the poset which we desire.
\allowbreak  \vspace{1em}

The poset relation we will define is inspired by flips of tiles in dimer configurations, in the sense of \cite{Kenyon-Wilson} and \cite{ms}. This local operation is classically given by:

\begin{center}
    \includegraphics[scale=.25]{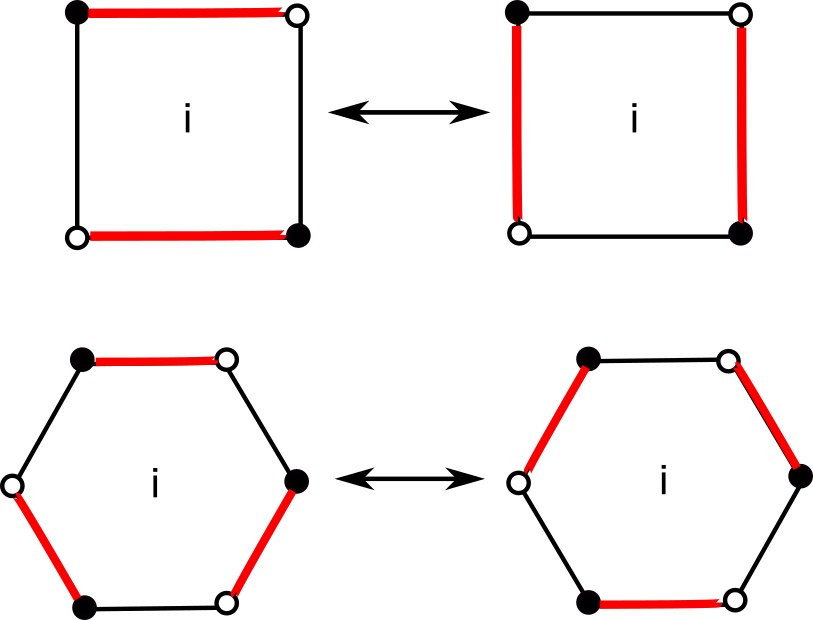}
\end{center}

\begin{rmk}
Not all tiles in a given dimer configuration can be flipped. Consider the following example:

\begin{center}
    \includegraphics[scale=.25]{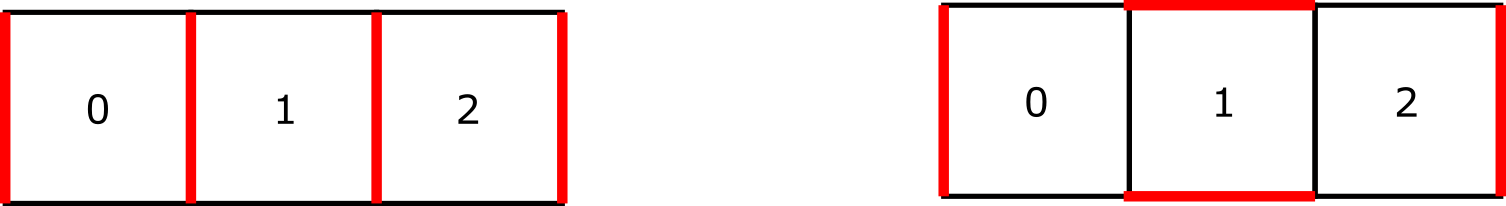}
\end{center}

In the left dimer configuration, we can flip every tile 0, 1, and 2 from the vertical configuration to the horizontal configuration. However, in the right dimer configuration, the only tile we can flip is tile 1, i.e. by replacing the horizontal configuration with the vertical configuration. We cannot flip tile 0 because the edge straddling tile 0 and 1 is not present in this configuration. Similarly, we cannot flip tile 2.
\end{rmk}

To extend this to a mixed dimer configuration, we define flipping as moves that we can perform twice in a row on tiles:

\begin{center}
    \includegraphics[scale=.25]{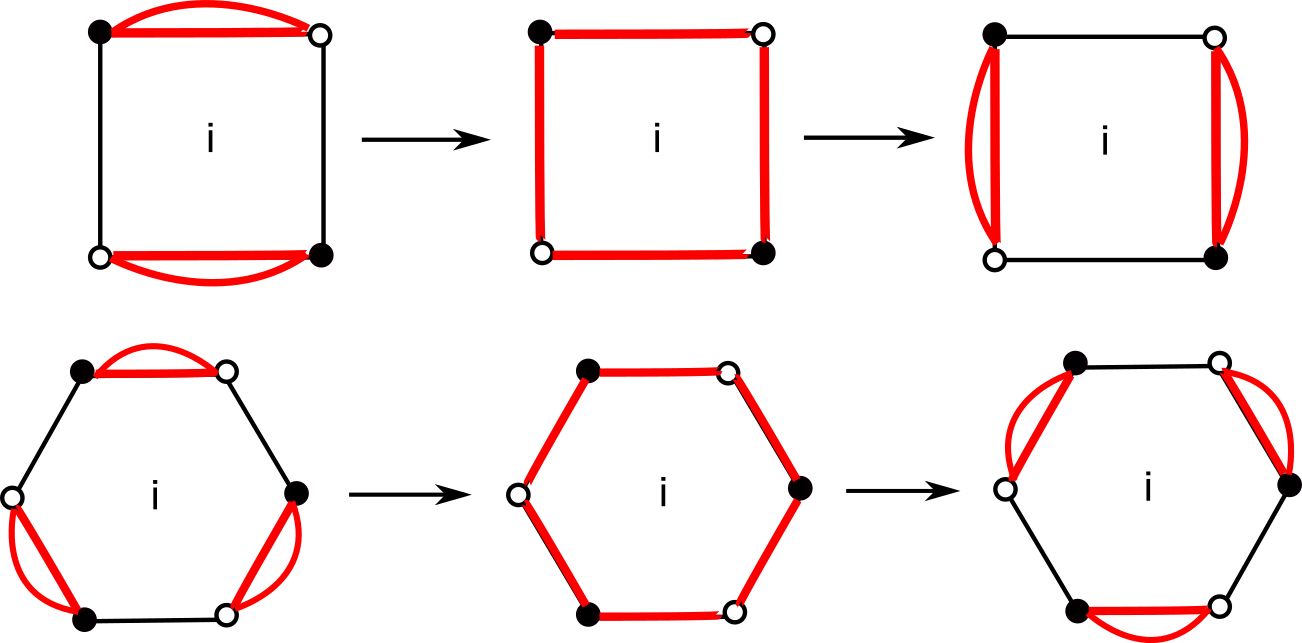}
\end{center}

\begin{rmk}
This modification is necessary to model the $F$-polynomial in type $D$ because there are monomials that appear with terms with an exponent of two i.e. a term like $u_i^2$. In the type $A$ case, no such exponent of two appears which meant that the dimer configurations where flipping is an involution was sufficient.
\end{rmk}

\begin{defn}
We say that the flip of tile $i$ is \textbf{allowable} if we can flip  tile $i$ as in the pictures shown above. More concretely, if all edges going black to white clockwise occur in a mixed dimer configuration on tile $i$, we can exchange these edges for the edges that go white to black clockwise. If for a given edge of the graph, there are multiple copies of that edge in a mixed dimer configuration, then with each flip, we only exchange a single copy of that edge. Implicit in this definition is that the associated $d_i$ is positive.
\end{defn}

\begin{defn}
Let $(\bar{P}, \leq)$ be the {\bf poset of mixed dimer configurations} that satisfy the valence condition and are reachable via a sequence of allowable flips from $M_-(Q, \underline{d})$.  For two such mixed dimer configurations $D$ and $D'$, we say that $D \leq D'$ if there exists a sequence of allowable flips from $D$ to obtain $D'$.
\end{defn}

The poset $\bar{P}$ turns out to have more elements than the number of monomials in the $F$-polynomial associated to $\underline{d}$. Hence, we need to refine the number of elements in our poset by disallowing some sequences of allowable flips. We will let $(P, \leq) \subseteq (\bar{P}, \leq)$ be the subposet of mixed dimer configurations that satisfy the valence condition, are reachable via a sequence of allowable flips from $M_-$, and satisfy the following condition known as being \textbf{node monochromatic}.

\begin{subsubsection}{Node Monochromatic Mixed Dimer Configurations}
\label{sec:nodes}
Fix a type $D_n$ quiver $Q$ and a positive root $\underline{d} = (d_0, \dots, d_{n-1}) \in \Phi_+$. Let $G$ be the associated base graph with fixed bipartite coloring. We now define a condition required of mixed dimer configurations to model the $F$-polynomial. \allowbreak \vspace{1em}

We distinguish three pairs of vertices of $G$ that we call \textbf{nodes} via the following prescription:
\begin{itemize}
    \item On the tile labeled $n-1$, take the two vertices of the square tile that are not incident to the hexagon, call them $u,v \in V(G)$. Color these nodes $u,v$ \textcolor{red}{red}.
    \item On the tile labeled $n-2$, take the two vertices of the square tile that are not incident to the hexagon, call them $w,x \in V(G)$. Color these nodes $w,x$ \textcolor{blue}{blue}.
    \item The last choice of vertices splits into a few cases. Let $d_j$ be a 2 in the positive root $\underline{d}$ of minimal index. \allowbreak  \vspace{1em}
    
    \textbf{Case 1}: If $j-2<0$, then this means that $j=1$. Take the two vertices on tile $0$ that are not incident to tile $1$, call them $y,z \in V(G)$. Color these nodes $y,z$ \textcolor{olive}{green}.\allowbreak  \vspace{1em}
    
    \textbf{Case 2}: Suppose $j \geq 2$. We now have two subcases.
    \begin{itemize}
        \item \textbf{Case 2a.} If the tiles $j-2, j-1, j$ are placed straight, i.e. the orientation in the quiver is given by $j-2 \to j-1 \leftarrow j$ or $j-2 \leftarrow j-1 \to j$, we take the two vertices on tile $j-1$ that are not incident to tile $j$, call them $y,z \in V(G)$. 
        \item \textbf{Case 2b.} If the tiles $j-2, j-1, j$ are placed in a zig-zagging fashion, i.e. the orientation in the quiver is given by $j-2 \leftarrow j-1 \leftarrow j$ or $j-2 \to j-1 \to j$, we choose $y$ on tile $j-1$ as the vertex belonging to the edge that is incident to tile $j-2$, but not tile $j$. We choose $z$ on tile $j-2$ that is ``diagonally across" from $y$. Color these nodes $y,z$ \textcolor{olive}{green}.  In the two figures (above and below), we illustrate our definition of the green nodes.
    \end{itemize}
    
    The placement of the \textcolor{olive}{green} nodes is illustrated in the following figure:\footnote{Note that to make these diagrams as clear as possible, we have fixed an orientation of the arrows between $n-1$, $n-2$ and $n-1$, $n-3$ as we previously defined the red and blue nodes. We have also fixed an orientation between $n-3$ and $n-4$ as it does not affect the definition of the green nodes.} 
    \begin{center}
    \includegraphics[scale=.23]{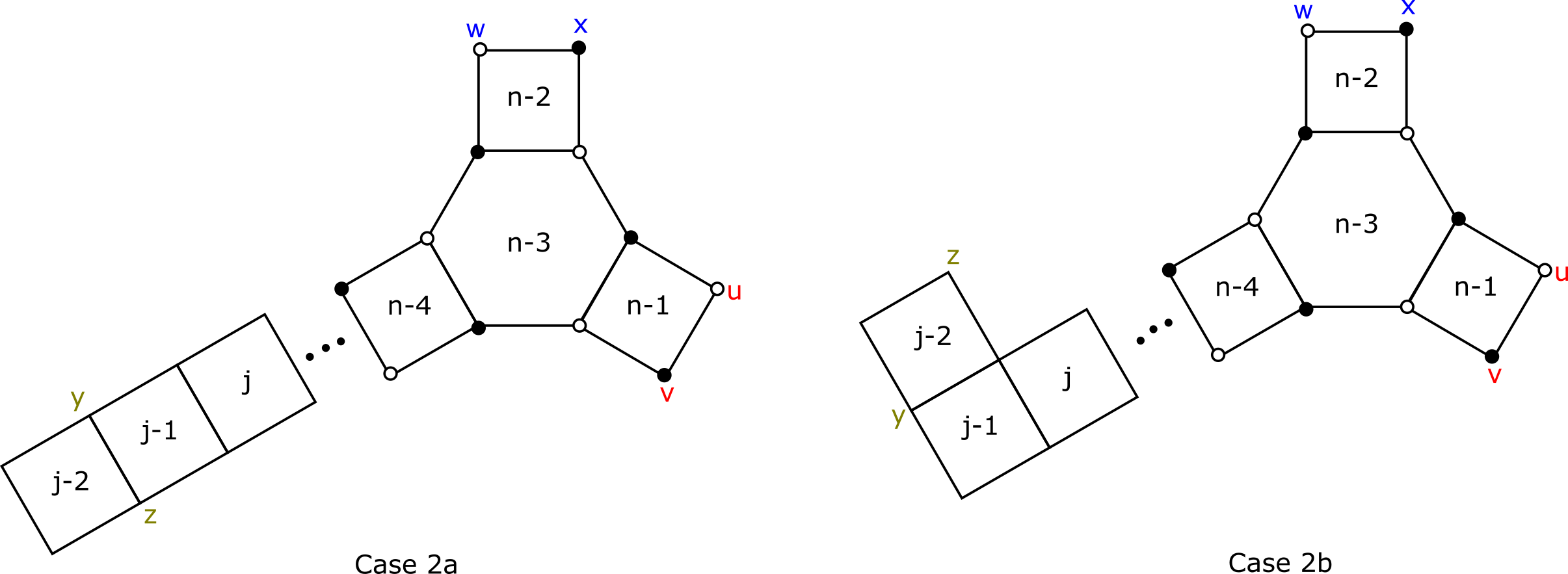}
    \end{center}
\end{itemize}

\begin{defn} \label{defn:nodemonochromatic}
 A mixed dimer configuration $M$ of $G$ is \textbf{node-monochromatic} if any path consisting of edges in $M$ between nodes connects nodes of the same color. If there exists a path consisting of edges in $M$ between nodes of different colors, we say $M$ is \textbf{node-polychromatic}.
\end{defn}

\begin{rmk}
We note that if $\underline{d} \in \{0,1\}^n$, i.e. $\underline{d}$ does not contain a $2$, then any mixed dimer configuration satisfying the valence condition given by $\underline{d}$ is always node-monochromatic.
To see this most easily, we observe that such a mixed dimer configuration is a perfect matching on the subgraph defined by the support of $\underline{d}$, but the distance between any two nodes of different colors is at least two.
\end{rmk}

\begin{rmk}
Our definition of node-monochromatic dimer configurations in Definition \ref{defn:nodemonochromatic} differs from the rules appearing in related research on weighted enumeration of double dimer configurations such as in Jenne's work \cite{helen} or in Kenyon-Wilson's work \cite{Kenyon-Wilson} in the following way.  In \cite{helen} or \cite{Kenyon-Wilson}, the set of nodes is divided into three circularly contiguous sets \textcolor{red}{$R$}, \textcolor{green}{$G$}, and \textcolor{blue}{$B$} in such a way so that no node is paired with a node in the same set. In our model, we also have nodes colored by three circularly continguous sets, but being node-monochromatic requires (rather than forbids) the endpoints of a path are of the same color.
\end{rmk}

\end{subsubsection}
Now that the poset $P$ is well-defined, we aim to show that the minimal matching is in this poset.
\begin{prop}
\label{prop:minimalinP}
$M_-$ is in $P$, i.e. is a mixed dimer configuration on the base graph that satisfies the valence condition and is node-monochromatic.
\end{prop}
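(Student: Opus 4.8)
The plan is to verify the three required properties of $M_-$ one at a time, using the explicit construction $M_- = M_1 \sqcup M_2$ given in Section~\ref{sec:min_match}. The only genuinely new thing to check is node-monochromaticity, since the fact that $M_1$ and $M_2$ are dimer configurations of $G_1$ and $G_2$ (hence that $M_-$ satisfies the valence condition) is already asserted in the construction; I would simply restate why the ``clockwise black-to-white boundary edges'' prescription yields an honest matching on each connected subgraph. For a connected planar bipartite graph that is a union of polygonal tiles glued along edges, walking the outer boundary clockwise and selecting every edge oriented black-to-white picks out exactly one edge at each boundary vertex, and interior vertices of a snake-like tiling are covered because each tile contributes the unique matching supported on its boundary; connectivity of $G_1$ and $G_2$ follows from the interval-like shape of the support of a positive root $\underline d \in \Phi_+$ (the nonzero entries, and separately the entries equal to $2$, form a contiguous block of tiles), which is exactly the remark made in the text.

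First I would set up coordinates for the nodes. Recall the green nodes $y,z$ lie on tiles $j-1$ and $j-2$ (or on tile $0$ in Case~1), where $d_j$ is the first $2$; the blue nodes $w,x$ lie on the ``far'' vertices of tile $n-2$; and the red nodes $u,v$ lie on the ``far'' vertices of tile $n-1$. The key structural observation is that $M_2$ is supported entirely on the subgraph $G_2$ of tiles with $d_i = 2$, and this subgraph contains tile $j$ but does not extend ``past'' the hexagon in a way that reaches the red or blue nodes unless tiles $n-2$ and $n-1$ themselves carry the value $2$. So I would split into the two regimes already flagged in the Remark: if $\underline d \in \{0,1\}^n$ there is nothing to prove (any valence-condition configuration is a perfect matching on the support, and the three pairs of nodes are pairwise at distance $\geq 2$, so no $M$-path joins nodes of different colors), and otherwise $\underline d$ contains a $2$ and I must trace the $M_-$-paths through nodes explicitly.

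For the case where $\underline d$ has a $2$, the main step is: describe the connected components of $M_-$ that contain a node, and check each such component's nodes are monochromatic. Because $M_-$ is the ``minimal'' configuration — built from boundary edges only — its components are short: $M_1$ alone is a perfect matching of $G_1$, so as a $1$-regular graph its components are single edges, and superimposing $M_2$ (a perfect matching of $G_2$) only merges these into alternating cycles/paths lying inside $\overline{G_2}$ or along its boundary. I would argue that a node $u$ (red) is matched by $M_1$ along an edge of tile $n-1$ to a vertex incident to the hexagon, and the $M_-$-path from $u$ either terminates immediately (if tile $n-1$ is not in $G_2$) or continues into the hexagon; in the latter case tile $n-1$ has $d_{n-1}=2$, forcing $d_{n-3}=2$ by the root structure, and one checks the path then either closes up or reaches the other red node $v$, never a blue or green node. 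The analogous local analysis handles blue and green. Keeping track of which of tiles $n-3, n-2, n-1$ and which of the ``early'' tiles around $j-2,j-1,j$ are in $G_2$ is the bookkeeping core of the argument, and this is where I expect the real work to be.

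\textbf{The main obstacle} I anticipate is the green-node case analysis, Cases 2a versus 2b, because the location of the green nodes depends on the \emph{straight-versus-zigzag} placement of tiles $j-2,j-1,j$, which in turn depends on the quiver orientation; I would need to verify in each sub-case that the single $M_1$-edge at $y$ (and at $z$) together with any $M_2$-edges cannot create a path from $\{y,z\}$ to the blue or red nodes. Since by construction of $M_-$ the configuration near the green nodes is as ``spread out'' as possible (minimal), I expect that the $M_-$-path emanating from a green node stays within tiles $j-2,\dots,j$ and their immediate neighbors, hence cannot reach tiles $n-2,n-1$ unless the entire stretch of tiles in between also has $d_i=2$, in which case the path is a long alternating cycle internal to $\overline{G_2}$ that avoids all nodes except possibly pairing $y$ with $z$. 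Making ``stays within'' precise — ideally by an explicit picture matching the figure defining the green nodes — is the crux, and I would present it as a short lemma about the shape of $M_-$-components followed by the three color checks.
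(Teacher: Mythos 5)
Your overall skeleton matches the paper's: the valence condition and reachability are immediate from the construction of $M_- = M_1 \sqcup M_2$, and the real content is node-monochromaticity, handled by a local analysis of the $M_-$-paths emanating from the six nodes, with the green-node Cases 2a/2b as the delicate part. However, there is a genuine gap: the decisive verification is deferred rather than carried out, and the structural claims you substitute for it are not quite right. The paper's argument rests on two concrete facts that your proposal never establishes. First, every component of $M_-$ that contains a node is a short path: $M_1$ contributes only single boundary edges, and the only way $M_2$ extends them is by adding an edge of the boundary of $G_2$ that is interior to $G_1$, which joins two already-distinguished $M_1$ edges into a path of length $3$ (or doubles an edge into a $2$-cycle). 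Second, and crucially, $M_-$ contains no boundary edge oriented white-to-black clockwise (with respect to $G_1$ or to $G_2$); a length-$2$ or length-$3$ path joining nodes of different colors would necessarily traverse such an edge, so no such path can occur. The paper then reduces the check to $D_4$ (length $2$) and $D_5$ (length $3$, covering Case 1 and Cases 2a/2b) via superimposed diagrams over all acyclic orientations, since for larger $n$ the green nodes are too far from the red and blue ones. Your proposal replaces this with ``one checks the path then either closes up or reaches the other red node'' and with the assertion that a green-node path reaching tiles $n-2$, $n-1$ would require the whole intermediate stretch to have $d_i = 2$ and would then be ``a long alternating cycle internal to $\overline{G_2}$ that avoids all nodes.'' That assertion is both unproven and misleading: whenever $\underline{d}$ contains a $2$, the root structure forces $d_j = \cdots = d_{n-3} = 2$ automatically, and $M_-$ then genuinely contains length-$3$ paths that do reach the far vertices of tiles $n-2$ and $n-1$ (through the straddling edges picked up by $M_2$); these paths are harmless only because of the black-to-white orientation convention, which your argument never invokes. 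Until that orientation-based obstruction (or an equivalent case check) is made explicit, the monochromaticity claim is not established.

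Two smaller points: your regime ``tiles $n-2$ and $n-1$ in $G_2$'' is vacuous, since every positive root of type $D_n$ has $d_{n-2}, d_{n-1} \leq 1$, so the red and blue nodes always have valence $1$ in $M_-$ and are endpoints of paths; and your reduction of the case $\underline{d} \in \{0,1\}^n$ via the paper's remark is fine and consistent with the paper, though the paper's proof does not need to split it off.
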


\begin{proof}
First note that $M_-$ trivially satisfies the condition that it is reachable by a sequence of allowable flips from $M_-$ by taking the empty sequence. Note that $M_-$ also satisfies the valence condition by construction. So, it suffices to show that $M_-$ is node-monochromatic by showing that there are no paths between nodes of different colors. This is a consequence of the bipartite coloring. By a case-by-case analysis for the acyclic orientations of quivers of type $D_4$, we see that no nodes of different colors can be connected in any orientation of the quiver. The following diagram represents all 8 acyclic orientations of $D_4$ drawn on one graph where the choice of orientation of arrow in the quiver is indicated on top of the graph:

\begin{center}
    \includegraphics[scale=.25]{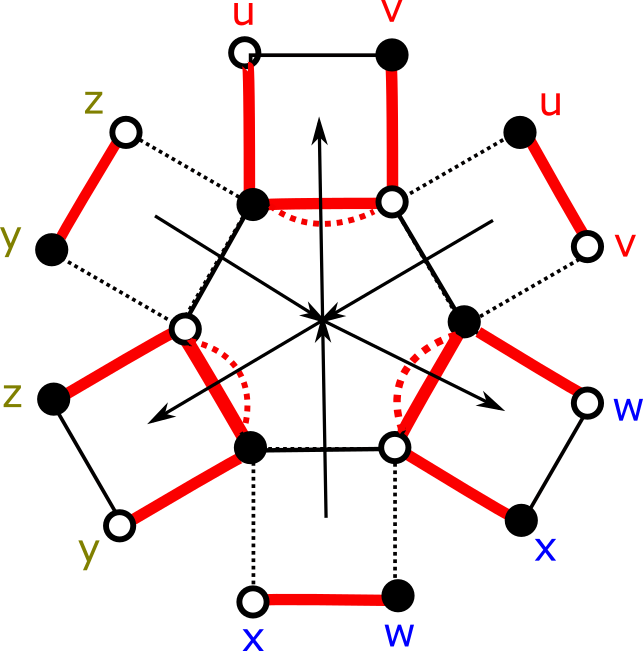}
\end{center}

By the definition of $M_-$, for any $n$, the longest path distinguished in the matching has length 3. This occurs because when enumerating the dimer configuration in $G_1$, we only create paths of length 1. When enumerating edges in $G_2$, we distinguish all clockwise black to white oriented edges on the boundary of $G_2$. If these edges are also boundary edges of $G_1$, then we create a 2-cycle on these edges. If these edges were not boundary edges of $G_1$, they must be interior edges of $G_1$ in which case their inclusion creates a path of length 3 as enumerating this interior edge connects two boundary edges that were previously distinguished.

Suppose that two nodes of different colors can be connected by a path of length 2 in $G$. (Note that no path of length 1 can connect nodes of different colors by definition.) In this case, it suffices to look at $D_4$, as with higher $D_n$, the nodes $y,z$ move farther away from the other nodes and cannot be connected by a path of length 2. Both edges in this path of length two must be boundary edges of $G$, i.e. edges that do not straddle two different tiles in $G$. Moreover, with respect to the full graph $G$, one is colored black to white clockwise and the other must be colored white to black clockwise. By definition of $M_-$, we do not distinguish boundary edges that are oriented white to black clockwise. So, only the edge oriented black to white clockwise will appear in $M_-$ which means that this path cannot be contained in $M_-$. \allowbreak  \vspace{1em}

Now suppose that there is a path of length 3 in $G$ connecting nodes of different colors.  In this case, it suffices to look at $D_5$, as with higher $D_n$, the nodes $y,z$ move farther away from the other nodes and cannot be connected by a path of length 3. In order to connect nodes of different colors, this path would have to traverse over a white to black clockwise edge on the boundary of $G_2$. This is because in any orientation of $D_5$, paths cannot be formed using the white to black clockwise oriented edges with respect to the hexagon in $M_-$. \allowbreak \vspace{1em}

For $D_5$, there are two positive roots that make it potentially possible for nodes of different color to be connected; either $\underline{d}=(1,2,2,1,1)$ or $\underline{d}=(1,1,2,1,1)$. The following diagram represents the minimal matching when $\underline{d}=(1,2,2,1,1)$, i.e. Case 1 in Section \ref{sec:nodes}. In all possible orientations of $D_5$, paths in $M_-$ do not connect nodes of different colors\footnote{Just like in the $D_4$ case above, we superimpose together the various possible base graphs for $D_5$, but use dotted edges to illustrate the contrasts.}.  

\begin{center}
    \includegraphics[scale=.2]{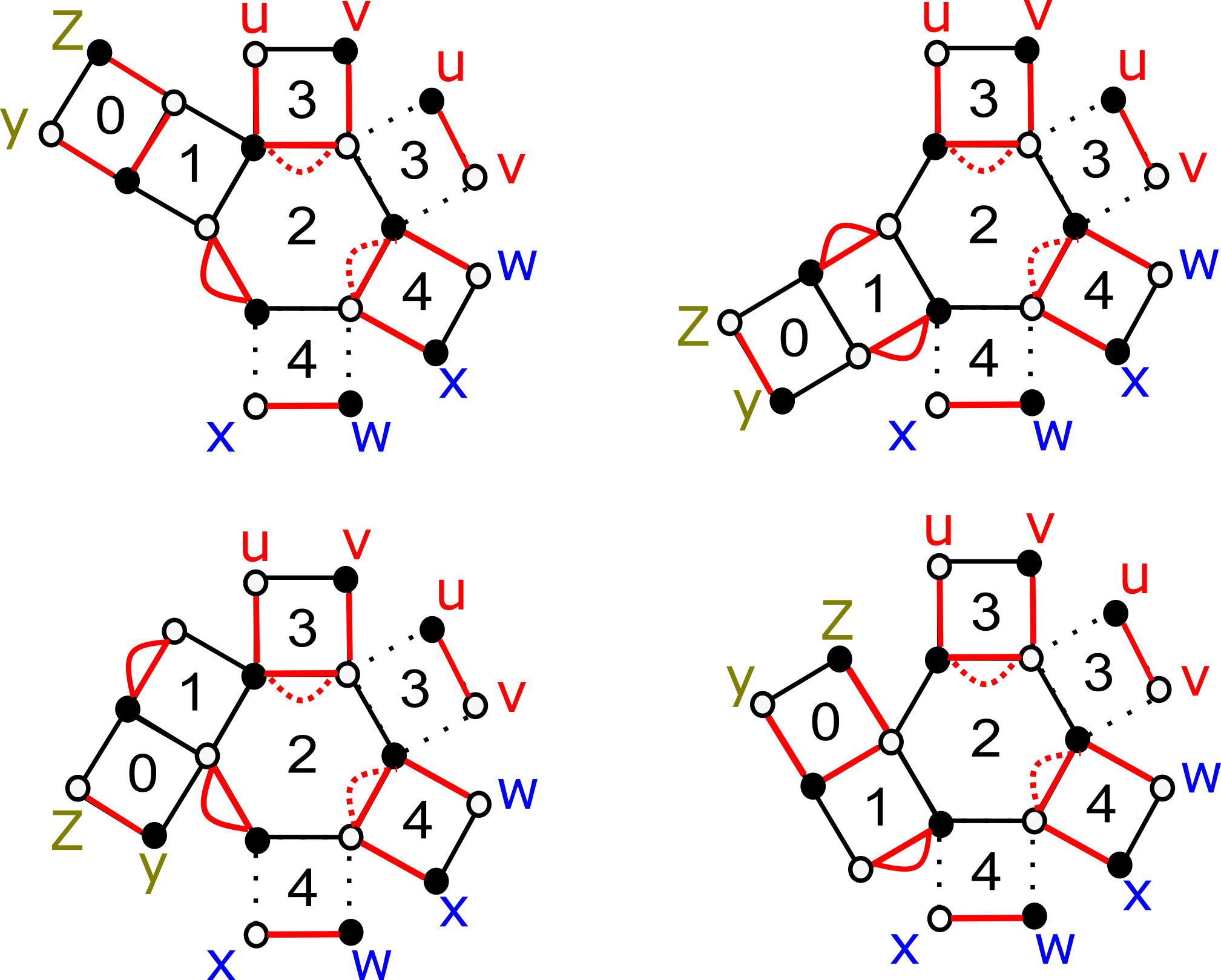}
\end{center}

The next diagram represents the minimal matching when $\underline{d}=(1,1,2,1,1)$ i.e. Cases 2a and 2b in Section \ref{sec:nodes}. In all possible orientations of $D_5$, paths in $M_-$ do not connect nodes of different colors. 

\begin{center}
    \includegraphics[scale=.2]{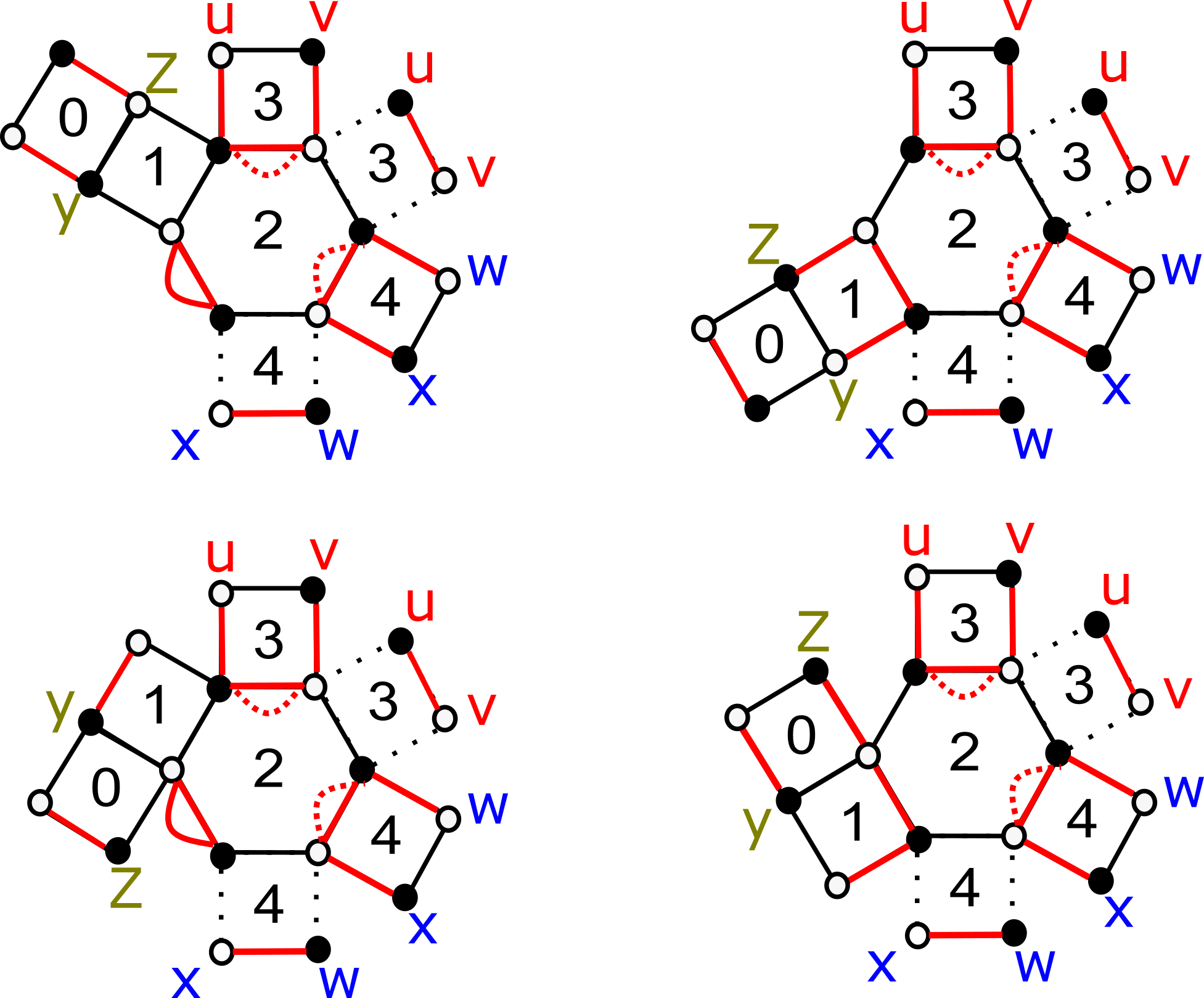}
\end{center}

So, by definition of $M_-$, no boundary white to black clockwise edge is distinguished in $G_2$ which means this path cannot be contained in $M_-$. Thus, $M_-$ is node-monochromatic.

\end{proof}

\end{subsection}

We show that the elements in $(P,\leq)$ give the nonzero monomials in the $F$-polynomial associated to $\underline{d}$.

\begin{subsection}{Main Theorem}
\begin{thm}
\label{thm:main}
Given an acylic quiver $Q$ of type $D_n$ and a choice of positive root $\underline{d} \in \Phi_+$ for the corresponding root system, we let $F_{\underline{d}}$ denote the $F$-polynomial corresponding to the cluster variable with denominator vector $\underline{d}$.  This expression is based on the appropriate cluster algebra of type $D_n$ and assuming an initial seed defined by the choice of quiver $Q$ and the standard initial cluster of $\{x_1,x_2,\dots, x_n\}$. \allowbreak \vspace{1em}

Furthermore, let $M_- = M_-(Q,\underline{d})$ be the minimal matching, as defined in Section \ref{sec:min_match} and let $P$ be the poset of mixed dimer configurations that satisfy the valence condition, are reachable via a sequence of allowable flips from $M_-$, and satisfy the node monochromatic condition. Then the expansion of $F$-polynomial $F_{\underline{d}}$ can be expressed as a weighted multi-variate rank-generating function on the poset $(P, \leq)$ determined by $M_-$:

$$F_{\underline{d}} = \sum_{D \in P} 2^c u_{0}^{t_0} u_1^{t_1} \cdots u_{n-1}^{t_{n-1}},$$
where the sum is taken over mixed dimer configurations $D$ obtained by flipping tile $i$ $t_i$-times (keeping track of multiplicities) and $c$ is the number of cycles enclosing a face in $D$.
\end{thm}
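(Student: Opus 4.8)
The plan is to establish Theorem~\ref{thm:main} by exhibiting an explicit weight-preserving bijection between the poset $(P, \leq)$ of node-monochromatic mixed dimer configurations and Thao Tran's combinatorial model for the $F$-polynomial of type $D_n$ from \cite{tran}, and then invoking her theorem. First I would recall precisely what Tran's model produces: for an acyclic type $D_n$ quiver and a positive root $\underline{d} \in \Phi_+$, her formula expresses $F_{\underline{d}}$ as a sum over certain combinatorial objects (subsets/compatible pairs coming from her construction), each contributing a monomial $\prod_i u_i^{t_i}$ together with a power of $2$. The goal is to set up a map $\Phi \colon P \to \{\text{Tran's objects}\}$ and its inverse $\Psi$, verifying in each direction that (i) the valence condition and node-monochromatic condition on the mixed-dimer side correspond exactly to the defining constraints on Tran's side, (ii) the exponent vector $(t_0,\dots,t_{n-1})$ — the number of times each tile is flipped, counted with multiplicity — matches the exponent vector Tran records, and (iii) the cycle count $c$ on a mixed dimer configuration matches the exponent of $2$ in the corresponding term of Tran's expansion.

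The key steps, in order, are: (1) fix the dictionary between tiles of the base graph and vertices of the quiver, and between "flipping tile $i$" and the elementary move in Tran's model that increments the $i$-th coordinate; this requires checking the snake-graph/hexagon stacking rules against Tran's indexing conventions, using the type-$A$ case of Section~\ref{sec:snake_A} as the backbone and handling the hexagon (vertex $n-3$) and the two leaf squares $n-2, n-1$ separately. (2) Show $\Phi$ is well-defined: starting from $M_-$ and applying allowable flips, argue by induction on the number of flips that one never leaves the set of configurations satisfying the valence condition (immediate, since flips preserve valence by the double-flip convention of Section~\ref{sec:poset}) and that node-monochromatic configurations map precisely onto the admissible objects in Tran's model — the node-monochromatic restriction should correspond to exactly the constraint that cuts $\bar P$ down to the right size. (3) Build $\Psi$ in the reverse direction and check $\Phi \circ \Psi = \mathrm{id}$, $\Psi \circ \Phi = \mathrm{id}$. (4) Track weights: the monomial bookkeeping is essentially combinatorial bijective bookkeeping once (1) is pinned down, while the factor $2^c$ requires showing that each cycle enclosing a face in a mixed dimer configuration corresponds to a genuine binary choice in Tran's model (or a doubled contribution in her power-of-two count), and that distinct configurations with the same underlying monomial but different $c$ assemble to the correct total coefficient.

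I expect the main obstacle to be step (3)–(4) in combination, specifically matching the factor $2^c$: on the mixed-dimer side a cycle enclosing a face is a global feature of a configuration, whereas in Tran's model the powers of $2$ arise from local choices tied to the trivalent vertex and the structure of the root, so the bijection must be arranged so that "presence of a cycle around face $F$" translates into the corresponding local binary choice, and one must be careful that cycles do not interact (nested or adjacent cycles must contribute independently, which should follow from planarity of the base graph and the hexagon-square structure). A secondary technical point is the case analysis for the green nodes (Cases 1, 2a, 2b of Section~\ref{sec:nodes}): each case may require a slightly different local form of the bijection near tiles $j-2, j-1, j$, and one must check that node-monochromaticity is equivalent, case by case, to the admissibility of Tran's objects. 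Once the bijection is verified on the ``base'' configurations and shown to intertwine flips with Tran's elementary moves, the full statement follows by transporting Tran's theorem through the bijection, since both sides are then rank-generating functions over posets with matching covering relations and matching weights.
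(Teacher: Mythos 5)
Your overall strategy --- transporting Tran's theorem through a bijection between $P$ and her admissible data --- is exactly the route the paper takes, but as written the proposal stays at the level of a plan and the substance of the proof is missing. Concretely: Tran's objects are not ``subsets/compatible pairs'' equipped with elementary moves; they are integer vectors $\underline{e}$ with $0\le\underline{e}\le\underline{d}$ such that every arrow $i\to j$ is acceptable ($e_i-e_j\le\max(d_i-d_j,0)$) and $\nu(C)\le 1$ for every connected component $C$ of $S=\{i:(d_i,e_i)=(2,1)\}$, the coefficient being $2^c$ with $c$ the number of components having $\nu(C)=0$. Since her side carries no poset structure, ``matching covering relations'' cannot do the work; what is needed (and what the paper supplies) is a quantitative link between a configuration and its flip data, namely the multiplicity formula $m_{i,j}=\max(d_i-d_j,0)+(e_j-e_i)$ for the edge between tiles $i$ and $j$ (with a boundary analogue), proved by induction on $|\underline{e}|$ using weighted flips that temporarily permit negative-weight ``antiedges.'' That formula is what makes the forward map $\underline{e}\mapsto D$ well defined, shows acceptability is equivalent to nonnegativity of interior edge weights, and lets one argue the result is reachable by allowable flips; your step (2) merely asserts that node-monochromaticity ``should correspond'' to the constraint cutting $\bar P$ down, which is precisely the statement requiring proof.

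The two places you flag as obstacles are indeed where the real content lies, and the proposal gives no mechanism for resolving them. The inverse map is not an abstract undoing of flips: the paper constructs it by superimposing $D$ with $M_-$ and repeatedly deleting maximal cycles of length greater than $2$, recording the faces they enclose; one must then check, via a case analysis on the hexagon and the node placements (your Cases 1, 2a, 2b), that node-monochromaticity of $D$ is equivalent to Tran's criticality bound $\nu(C)\le 1$, and separately that cycles in $D$ enclosing faces are in bijection with the components $C$ of $S$ having $\nu(C)=0$ --- this last step again uses the multiplicity formula to show every boundary edge of such a component carries exactly one edge of $D$ while the interior straddling edges carry none. Your description of the $2^c$ factor as arising from ``local choices tied to the trivalent vertex'' is not an accurate account of Tran's coefficient, and without the component/cycle correspondence the coefficient matching in your step (4) has no argument. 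So the approach coincides with the paper's, but the bijection itself, the equivalence between node-monochromaticity and criticality, and the cycle/component correspondence all remain to be constructed; as it stands these are genuine gaps rather than routine verifications.
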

We provide an example before proceeding to the proof of Theorem {\ref{thm:main}}.

\begin{ex}
\label{d6example}
Suppose that we have the following $D_6$ quiver:

\begin{center}
\begin{tikzpicture}
\node at (-1,0) {$Q=$};
\node at (0,0) (0){$0$};
\node at (1.5,0) (1){$1$};
\node at (3,0) (2){$2$};
\node at (4.5,0) (3){$3$};
\node at (6,1) (4){$4$};
\node at (6,-1) (5){$5$};
\draw[->, thick] (1) -- (0);
\draw[->, thick] (2) -- (1);
\draw[->, thick] (3) -- (2);
\draw[->, thick] (4) -- (3);
\draw[->, thick] (3) -- (5);
\end{tikzpicture}
\end{center}

Suppose that we take $\underline{d} = (1,1,2,2,1,1) \in \Phi_+$. Then, we have that the $F$-polynomial associated to $\underline{d}$ is given by:

\begin{align*}
    F_{\underline{d}} &= 1+u_0+u_2+u_5+u_0u_1+u_0u_2+u_0u_5+u_2u_5\\
    &+2u_0u_1u_2+u_0u_1u_5+u_0u_2u_5+u_2u_3u_5+2u_0u_1u_2u_5+u_0u_1u_2^2\\
    &+u_0u_1u_2u_3+u_0u_2u_3u_5+2u_0u_1u_2u_3u_5+u_0u_1u_2^2u_5+u_0u_1u_2^2u_3\\
    &+u_0u_1u_2u_3u_4u_5+2u_0u_1u_2^2u_3^2u_5+u_0u_1u_2^2u_3u_4u_5+u_0u_1u_2^2u_3^2u_5+u_0u_1u_2^2u_3^2u_4u_5
\end{align*}

The poset $P$ is Figure 1 on the following page where tiles are shaded grey to emphasize when they have been enclosed by a cycle.

\newpage

\begin{figure}[H]
    \centering
    \includegraphics[scale=.225]{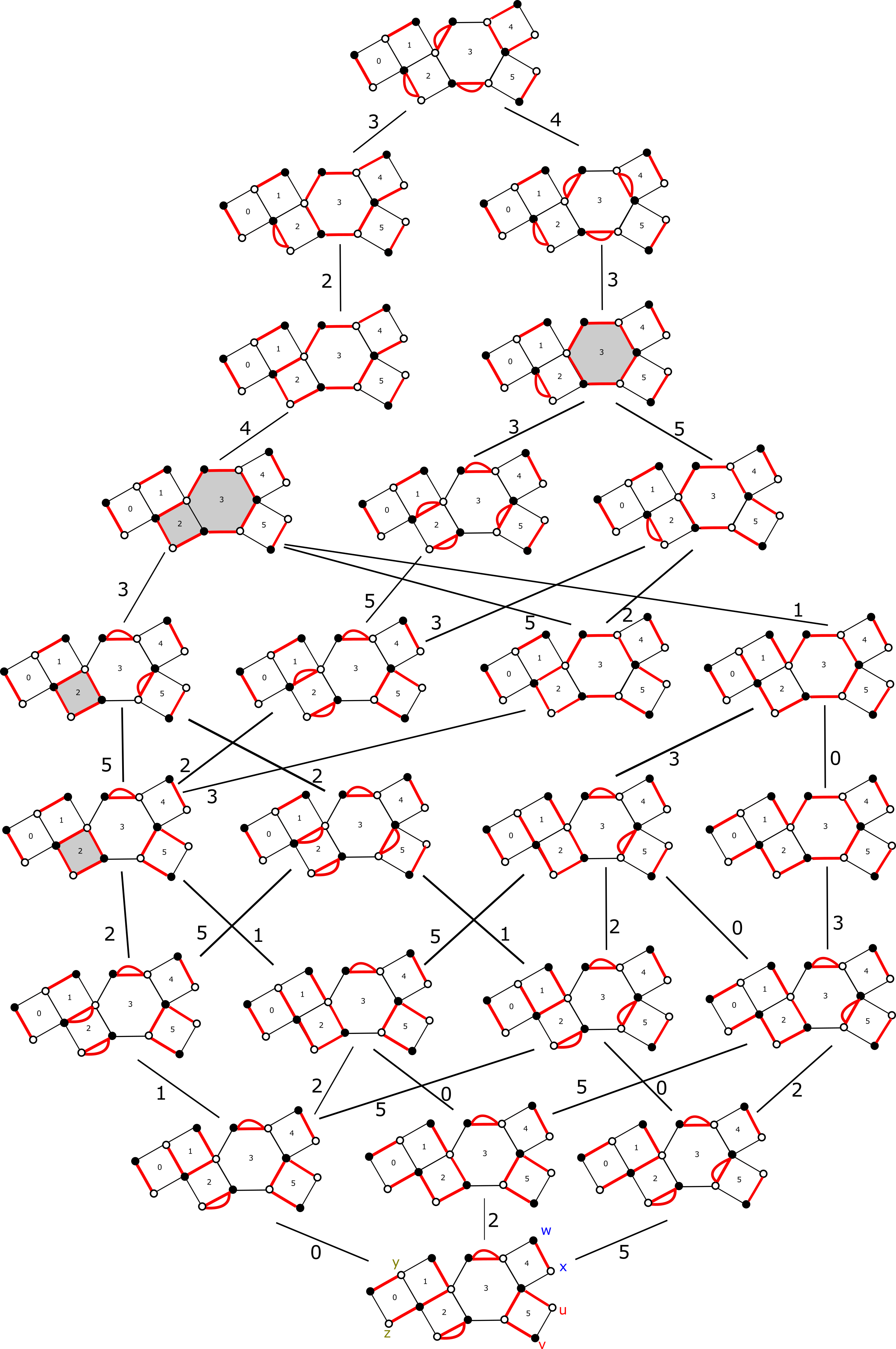}
    \caption{The poset associated to Example \ref{d6example}.}
\end{figure}

\begin{rmk}
\label{rem:Exa}
We see that we do not flip at all tiles at which we can perform allowable flips. It is crucial that these mixed dimer configurations stay node-monochromatic. For example, if we analyze the set of allowable flips from the minimal matching $M_-$, after flipping tile 2, an allowable flip can occur at the hexagon, tile 3. However, the monomial $u_2u_3$ does not appear in $F_{\underline{d}}$. With our choice of nodes, we see that by flipping tile 3, we connect nodes of different colors i.e. there is a path between the green node $z$ to the red node $v$. This is why the mixed dimer configurations in our poset must be node-monochromatic.
In Section \ref{sec:dist_latt}, we show a poset-theoretic consequence of this restriction.
\end{rmk}

\end{ex}
\end{subsection}

\begin{subsection}{The poset of node-monochromatic mixed dimer configurations is not distributive}
\label{sec:dist_latt}
After defining our poset of (node-monochromatic) mixed dimer configurations, which we denoted as $(P, \leq)$, it is of independent combinatorial interest to investigate its properties. Our poset of mixed dimer configurations forms a lattice since the unique minimal element corresponds to the constant term 1 in the $F$-polynomial and the unique maximal element corresponds to $\underline{u}^{\underline{d}}$ in the $F$-polynomial. \allowbreak \vspace{1em}

Recall a lattice is distributive if the operations of meet and join distribute over one another i.e. for any $A,B,C \in P$, we have 

$$A \vee (B \wedge C) = (A \vee B) \wedge (B \vee C).$$

In the Type A case, Theorem 5.2 in \cite{mswbases}, based on \cite[Section 3]{propplattice}, states that the snake graph poset of perfect matchings form a distributive lattice. It turns out that this is not the case for our poset of mixed dimer configurations as it contains a forbidden sublattice\footnote{On this sublattice, we have the property that the meets and joins of pairs of elements agree with their images as they would be in the original lattice.}  on 5 vertices that form a pentagon shape. Consider the following counterexample deduced from Figure \ref{fig:laurent}: 

\begin{center}
    \includegraphics[scale=.125]{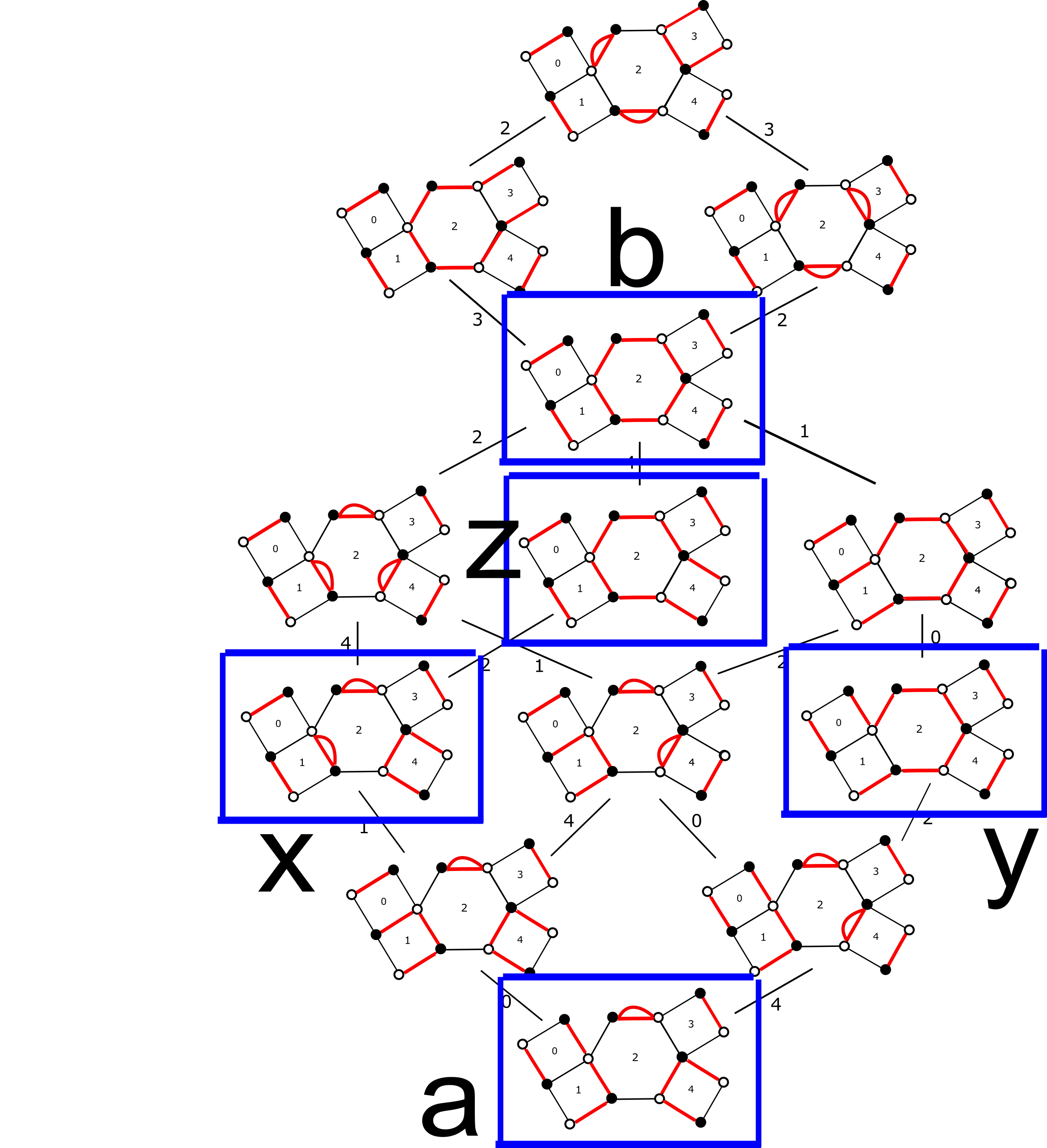}
\end{center}

Note that $x \vee (y \wedge z) = x$ whereas $(x \vee y) \wedge (x \vee z) = z$, Hence, our poset of mixed dimer configurations fails to be distributive.
\end{subsection}

\end{section}
\begin{section}{Proof of Theorem \ref{thm:main}} \label{sec:proof}
In order to prove that this mixed dimer configuration model indeed gives the $F$-polynomial, we construct bijections between mixed dimer configurations and tuples of integers which come from a model of Thao Tran \cite{tran}.
\begin{subsection}{Thao Tran's Model}
Define a partial order $\geq$ on $\zz^n$ via 
$$\underline{a} \geq \underline{a'} \text{ if } \underline{a} - \underline{a'} \in \zz_{\geq 0}^n.$$

\begin{defn} {\cite[Definition 4.1]{tran}}
Fix $n\geq 4$, and let $Q$ denote an acyclic quiver of type $D_n$.  Further, fix
$\underline{d} \in \Phi_+$ , a positive root of type $D_n$, and $\underline{e} = (e_0, \dots, e_{n-1}) \in \zz^n$ with $0 \leq \underline{e} \leq \underline{d}$. An arrow $i \to j$ in $Q$ is \textbf{acceptable} with respect to the pair $(\underline{e}, \underline{d})$ if 

$$e_i-e_j \leq \max(d_i-d_j,0).$$ 

An arrow $i \to j$ is \textbf{critical} with respect to the pair $(\underline{e}, \underline{d})$ if either 

$$(d_i,e_i) = (2,1) \text{ and } (d_j,e_j) = (1,0)$$
or 
$$(d_i,e_i) = (1,1) \text{ and } (d_j,e_j) = (2,1).$$

Let $S$ be the induced subgraph of $Q$ on the set of vertices $\{i~:~(d_i,e_i) = (2,1)\}$. For a connected component $C$ of $S$, define 
$$\nu(C) = \#\{\text{critical arrows having a vertex in }C\}.$$
\end{defn}

\begin{thm}
{\cite[Theorem 4.2]{tran}}
\label{thm:Tran}
Fix $\underline{d} \in \Phi_+$, $\underline{e} \in \zz^n$. For the cluster algebra defined by the initial quiver $Q$ (acyclic of type $D_n$), the coefficient of the monomial $u_0^{e_0} \cdots u_{n-1}^{e_{n-1}}$ in the $F$-polynomial associated to $\underline{d}$ is nonzero if and only if all of the following conditions, which we refer to as {\bf Tran's conditions} in the sequel, are satisfied:

\begin{enumerate}
    \item $0 \leq \underline{e} \leq \underline{d}$
    \item all arrows in $Q$ are acceptable
    \item $\nu(C) \leq 1$ for all connected components $C$ of $S$, where $S$ is the induced subgraph as defined above.
\end{enumerate}
The coefficient of $u_{0}^{e_0} \cdots u_{n-1}^{e_{n-1}}$ is $2^c$, where $c$ is the number of connected components $C$ such that $\nu(C) = 0$.
\end{thm}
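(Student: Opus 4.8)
The plan is to reduce the statement to a computation of Euler characteristics of quiver Grassmannians. Since $Q$ is acyclic, the Caldero--Chapoton formula and its refinement to $F$-polynomials (valid for acyclic initial quivers) expresses the $F$-polynomial attached to the cluster variable with denominator vector $\underline d\in\Phi_+$ as
$$F_{\underline d}(u)=\sum_{\underline 0\le\underline e\le\underline d}\chi\bigl(\mathrm{Gr}_{\underline e}(M_{\underline d})\bigr)\,u_0^{e_0}\cdots u_{n-1}^{e_{n-1}},$$
where $M_{\underline d}$ is the (unique up to isomorphism) indecomposable representation of $\cc Q$ with dimension vector $\underline d$ and $\mathrm{Gr}_{\underline e}(M_{\underline d})$ is the projective variety of subrepresentations of dimension vector $\underline e$. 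So it suffices to prove that $\mathrm{Gr}_{\underline e}(M_{\underline d})\neq\emptyset$ exactly when Tran's conditions (1)--(3) hold, and that in that case $\chi(\mathrm{Gr}_{\underline e}(M_{\underline d}))=2^{c}$.

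The first step is to make the $M_{\underline d}$ explicit, splitting $\Phi_+$ into the \emph{thin} roots ($\underline d\in\{0,1\}^n$) and the \emph{long} roots (those supported on $\{i,\dots,n-1\}$ with value $2$ exactly on $\{j,\dots,n-3\}$). In the thin case $M_{\underline d}$ is a string/tree module: all structure maps along $\mathrm{supp}(\underline d)$ are isomorphisms of one-dimensional spaces, so a subrepresentation is determined by its support, which must be a subset of $\mathrm{supp}(\underline d)$ closed under following arrows inside $\mathrm{supp}(\underline d)$. A short check shows that a subrepresentation with prescribed dimension vector $\underline e$ exists iff $\underline 0\le\underline e\le\underline d$ and every arrow of $Q$ is acceptable; here $S=\emptyset$, so (3) is vacuous and $\mathrm{Gr}_{\underline e}(M_{\underline d})$ is a point, giving coefficient $2^{0}=1$, as claimed. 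In the long case, choose a basis so that all maps between the two-dimensional spaces at $\{j,\dots,n-3\}$ are the identity of $\cc^2$ and the three legs (at $j-1$, at $n-2$, at $n-1$) attach along three pairwise distinct lines $\ell_{j-1},\ell_{n-2},\ell_{n-1}\subset\cc^2$; pairwise distinctness is precisely indecomposability.

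The main step is the Euler-characteristic computation in the long case. For $\underline e\le\underline d$, a subrepresentation $N$ has $N_v$ forced whenever $d_v\le 1$, or $d_v=2$ with $e_v\in\{0,2\}$, and has $N_v$ a line in $\cc^2$ when $v\in S=\{v:(d_v,e_v)=(2,1)\}$. Since the maps along a run of $S$ are identities, the lines $N_v$ for $v$ in one connected component $C$ of $S$ must coincide, so $C$ contributes a single parameter $\ell_C\in\mathbb{P}^1$, and (because the forced pieces are constant) $\mathrm{Gr}_{\underline e}(M_{\underline d})\cong\prod_C Y_C$ with $Y_C\subseteq\mathbb{P}^1$. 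Each boundary arrow between $C$ and a neighbour either imposes no condition on $\ell_C$, or forces $N_v\in\{0,M_v\}$ for some $v\in S$ (impossible for a line, so $\mathrm{Gr}_{\underline e}=\emptyset$ — and exactly in these situations that arrow fails to be acceptable), or pins $\ell_C$ to one of $\ell_{j-1},\ell_{n-2},\ell_{n-1}$. The heart of the argument is the dictionary ``pinning constraints incident to $C$ $\leftrightarrow$ critical arrows incident to $C$,'' so that the number of pinning constraints equals $\nu(C)$. Granting acceptability everywhere, $Y_C$ is then $\emptyset$ if $\nu(C)\ge 2$ (two distinct forced lines cannot agree), a point if $\nu(C)=1$, and $\mathbb{P}^1$ if $\nu(C)=0$; multiplicativity of $\chi$ gives $\chi(\mathrm{Gr}_{\underline e}(M_{\underline d}))=\prod_C\chi(Y_C)=2^{\#\{C:\nu(C)=0\}}$ when all $\nu(C)\le 1$ and $0$ otherwise, which is exactly Tran's assertion.

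The most delicate part is this last step: one must run the case analysis over all acyclic orientations of the arrows near the trivalent vertex and inside the block of $2$'s — separating ``arrow into an $S$-run'' from ``arrow out of it,'' and placing the $j-1$ leg on the correct side — and verify in each case that (a) the subrepresentation conditions genuinely decouple across components even through the forced pieces, (b) any boundary arrow whose condition forces $N_v\in\{0,M_v\}$ for $v\in S$ is never acceptable, so the acceptable locus loses no information, and (c) the pinning conditions are precisely the critical arrows, with two such on one component always involving two genuinely distinct distinguished lines. An alternative to the geometric computation is a combinatorial induction via sink/source mutations (BGP reflection functors), which preserve acyclicity within type $D_n$ and reduce to quivers with fewer $2$'s or to type $A$, tracking how $\underline d,\underline e$, acceptability, and criticality transform; but the case analysis there is essentially the same.
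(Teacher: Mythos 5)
This statement is quoted from Tran's work and is not proved in the paper itself; the paper only records (in the remark following the theorem) that Tran's proof goes through the Derksen--Weyman--Zelevinsky expression of the $F$-polynomial as a generating function of Euler--Poincar\'e characteristics of quiver Grassmannians $Gr_{\underline{e}}(M)$, with conditions (1)--(3) characterizing the $\underline{e}$ for which $\chi$ is nonzero. Your proposal follows essentially that same route -- Caldero--Chapoton/DWZ reduction, explicit description of the indecomposable type $D_n$ module via three distinct lines in $\cc^2$, and a component-by-component computation identifying pinning constraints with critical arrows so that each component of $S$ contributes a point or a $\mathbb{P}^1$ -- and the structural claims you flag as delicate (decoupling across forced pieces, non-acceptable arrows exactly matching impossible containments, distinct lines for two pins) do check out, so this is a faithful reconstruction of the cited argument rather than a new one.
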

 \begin{rmk}
Although \cite{tran} proves Theorem \ref{thm:Tran} in more generality, i.e. for other Dynkin types, we focus on her proof in case of Type $D_n$ quivers. For this, she utilizes \cite[Theorem 4.7]{tran}, whose proof involves the Quiver Grassmanian, denoted $Gr_{\underline{e}}(M)$, which is the variety of all subrepresentations of a quiver representation $M$ of dimension $\underline{e}$. It is a closed subvariety of a product of Grassmannians. Her proof involves computing the $F$-polynomial by calculating Euler-Poincar\'e characteristics for multiple quiver Grassmannians ranging over various vectors $\underline{e}$, relying on \cite{DWZ2} for the correspondence between these two objects. To compute the Euler-Poincar\'e characteristic of $Gr_{\underline{e}}(M)$, Tran shows that conditions (1), (2), and (3) exactly give subrepresentations of dimension $\underline{e}$ for which the corresponding Euler-Poincare characteristic is nonzero. This provides the computation of the support of the $F$-polynomial.
 \end{rmk}

\end{subsection}
\begin{subsection}{Bijection of Models}

\begin{subsubsection}{Monomials} \label{sec: monomials}

\begin{thm}
\label{thm:biject}
Let $Q$ be an acyclic quiver of type $D_n$ and let $\underline{d} \in \Phi_+$. There is a bijection between
$$\{\text{mixed dimer configurations }D \text { in } P\} \stacklongleftrightarrow{\sim} 
\{\text{vectors } \underline{e} \text{ satisfying Tran's conditions}\}.$$
\end{thm}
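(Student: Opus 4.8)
The plan is to construct the bijection explicitly in both directions and then verify that it respects the two descriptions of the support. Given a mixed dimer configuration $D \in P$, it is reached from $M_-$ by a sequence of allowable flips; record $t_i$ as the number of times tile $i$ has been flipped (with multiplicity, since a tile supporting a doubled edge may be flipped twice). I would define the candidate vector $\underline{e}(D) = (t_0, t_1, \dots, t_{n-1})$. The first task is to show $0 \le \underline{e}(D) \le \underline{d}$: the lower bound is immediate, and the upper bound $t_i \le d_i$ follows because a tile $i$ with $d_i = 0$ is never touched, a tile with $d_i = 1$ carries at most one edge on each of its sides and hence admits at most one flip before being ``saturated'' in the opposite configuration, and a tile with $d_i = 2$ admits at most two flips by the double-flip picture. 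Conversely, given $\underline{e}$ satisfying Tran's conditions, I would flip each tile $i$ exactly $e_i$ times, in an order compatible with the quiver orientation (e.g.\ processing sources before sinks, or more carefully following the partial order on $\zz^n$), and argue that this sequence of flips is always allowable and lands at a well-defined configuration independent of the chosen order — this well-definedness is essentially a diamond/confluence lemma for the flip moves, and I expect it to be the main technical obstacle.

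The heart of the argument is matching up the three conditions of Theorem~\ref{thm:Tran} with membership in $P$. Condition (1) is handled by the paragraph above. For condition (2), I would show that an arrow $i \to j$ fails to be acceptable — i.e.\ $e_i - e_j > \max(d_i - d_j, 0)$ — precisely when carrying out $e_i$ flips at tile $i$ and $e_j$ flips at tile $j$ forces, at some stage, an unallowable flip: the edge straddling tiles $i$ and $j$ needed for the flip at tile $i$ is absent because tile $j$ has not been flipped enough times to supply it. This is a local analysis on the two adjacent tiles and the shared edge, organized by the finitely many pairs $(d_i, d_j) \in \{0,1,2\}^2$ and the orientation; the snake-graph/base-graph construction guarantees that ``white on the right'' makes the straddling edge behave uniformly. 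For condition (3) and the coefficient $2^c$: here is where the node-monochromatic restriction and the cycle count $2^c$ in Theorem~\ref{thm:main} must be reconciled with Tran's critical arrows and the components $C$ with $\nu(C) = 0$. I would argue that a connected component $C$ of the subgraph $S = \{i : (d_i,e_i)=(2,1)\}$ corresponds to a maximal block of tiles each flipped exactly once out of a possible two, and that such a block, in the mixed dimer configuration, either closes up into a cycle enclosing a face (contributing a factor of $2$, matching $\nu(C)=0$) or is ``pinned'' at one end by a critical arrow (which prevents the cycle from closing, matching $\nu(C)=1$); the condition $\nu(C) \le 1$ is exactly the statement that a block pinned at both ends would force a node-polychromatic path, hence is excluded from $P$.

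The final step is to check the two maps are mutually inverse, which reduces to the confluence lemma mentioned above: starting from $D$, extracting $\underline{e}(D)$, and re-running the canonical flip sequence must return $D$; and starting from a valid $\underline{e}$, running the canonical sequence, and counting flips must return $\underline{e}$. The latter is immediate once well-definedness is known; the former requires that any allowable flip sequence from $M_-$ to $D$ and the canonical one reach the same configuration, which is again confluence. I would prove confluence by a standard local-confluence plus termination (Newman's lemma) argument: flips at non-adjacent tiles obviously commute, and for adjacent tiles one checks the one relevant diamond by hand using the base-graph geometry; termination holds because $\sum_i t_i$ is bounded by $\sum_i d_i$ and strictly increases along flips. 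I expect the adjacency diamonds involving the hexagon tile $n-3$ — where three square tiles meet and the node definitions (Cases 1, 2a, 2b of Section~\ref{sec:nodes}) interact — to be the most delicate cases, and the bulk of the written proof will be a careful enumeration there; everything away from the hexagon is essentially the type $A$ snake-graph story already understood via \cite{ms, mswbases}.
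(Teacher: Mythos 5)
Your skeleton---send $D$ to its vector of flip multiplicities, send $\underline{e}$ to the configuration obtained by flipping tile $i$ exactly $e_i$ times, and match Tran's three conditions to the valence/allowability/monochromaticity constraints---is the same as the paper's, but the two pillars that make it work are missing. First, the mechanism that makes both maps well defined. The paper does not prove a confluence statement; it observes that a flip at tile $i$ changes each edge multiplicity of that tile by a fixed amount independent of the current configuration (the ``weighted flips'' with antiedges of Definition \ref{defn:weightedflip}), so the endpoint depends only on the multiset of flips, and it then derives closed formulas (Lemmas \ref{lemma:MijNij} and \ref{lemma:MinfinityN}): for an arrow $i \to j$ the straddling edge carries $\max(d_i-d_j,0)+e_j-e_i$ edges, and the boundary edges of tile $i$ carry $d_i-e_i$, respectively $e_i$, edges. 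These formulas simultaneously give your condition (1) and (2) equivalences (Corollary \ref{cor:accnonneg}), show the flip counts are recoverable from $D$ (read $e_i$ off a boundary edge), and reduce the existence of an allowable ordering to an inductive choice of the \emph{last} flip at a vertex $j$ with $e_j>e_i$ for every arrow $i \to j$, using that $Q$ is an oriented tree. Note that your suggested ordering runs the wrong way: when $d_i=d_j$ and $i \to j$, the formula forces $e_j \ge e_i$ at every stage, so the head of each arrow must be flipped no later than its tail (in an $A_2$-type situation the sink is flipped first, not the source). Newman's lemma is also not the statement you need: the issue is order-independence of the endpoint for a fixed multiset of allowable flips, not uniqueness of normal forms. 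Finally, the paper's inverse map is not ``count the flips'' but an intrinsic algorithm (Theorem \ref{thm:DtoE}): superimpose $D$ with $M_-$ and repeatedly delete largest cycles, incrementing $e_i$ for each enclosed face; this is what allows Tran's conditions to be verified on $\underline{e}$ without first knowing well-definedness of flip counts.

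Second, and more seriously, the equivalence between Tran's criticality condition ($\nu(C)\le 1$ for every component $C$ of $S$) and the node-monochromatic condition is asserted in one sentence (``a block pinned at both ends forces a node-polychromatic path''), yet this is where the model-specific content of the theorem lives: in the paper it occupies the bulk of the proofs of Theorems \ref{thm:EtoD} and \ref{thm:DtoE}, a case analysis over the orientations of the arrows at the trivalent vertex $n-3$ and at the minimal index $j$ with $d_j=2$, split according to the node placements of Section \ref{sec:nodes} (Cases 1, 2a, 2b), and including the case in which the two critical arrows share the single vertex $n-3$ (pointing to $n-2$ and $n-1$), which is not a block pinned at two ends. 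Your dichotomy for a component $C$ (a cycle encloses $C$ if and only if $\nu(C)=0$) does match the paper's coefficient argument, but both implications---criticality implies monochromatic (for $\underline{e}\mapsto D$) and monochromatic implies criticality (for $D\mapsto\underline{e}$)---require this geometric case-checking, and without it, and without the edge-multiplicity mechanism above, the proposal remains a plan rather than a proof.
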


This bijection is described as a map in both directions. The map sending $\underline{e} \longrightarrow D$ is given by performing a particular set of flips from $M_-$. In order to describe this direction of the bijection, we weight the edges of the base graph $G$ (which is associated to some quiver $Q$ and positive root $\underline{d}$) via
$$w(e) := \#\{ \text{ edges on }e \text{ in } M_-\}$$
for all $e \in E(G)$.
     
\begin{defn}
\label{defn:weightedflip}
The weights of the edges are transformed via the following prescription after flipping a tile: 
\begin{center}
    \includegraphics[scale=.3]{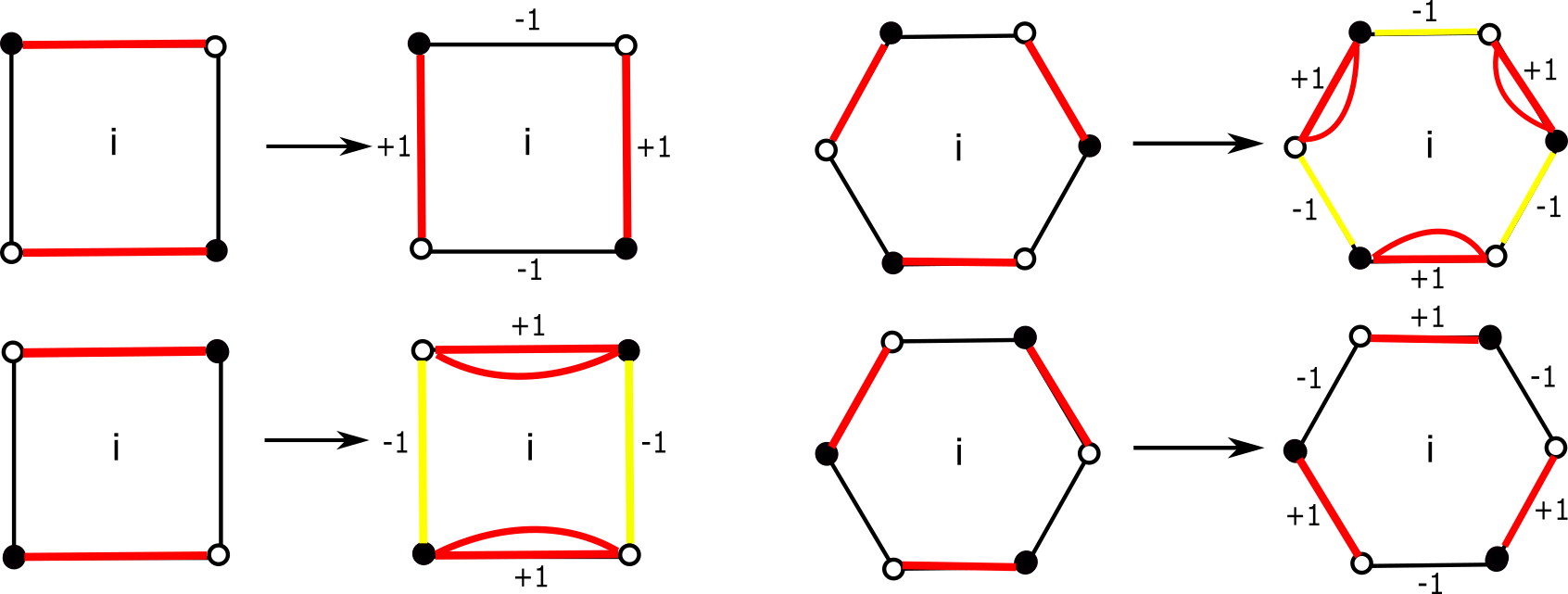}
\end{center}
This is known as a \textbf{weighted flip}.
\end{defn}
    
\begin{rmk}
Implicit in this definition is the fact that our weights will stay non-negative when we perform what we have previously called ``allowable flips." However, in the context of this direction of the bijection, we will allow for weighted flips at any tile; meaning that we now allow the weight of an edge to be negative. To emphasize the deficits which arise when we flip edges that we previously could not in the ``allowable'' sense, we distinguish edges of negative weight in yellow and call them ``antiedges." By introducing these weights and allowing them to be negative, we simplify the noncommutativity of flipping sequences of tiles.
\end{rmk}
     
\begin{thm}
\label{thm:EtoD}
Given $(Q,\underline{d}, \underline{e})$ where $\underline{e}$ satisfies  Tran's conditions, i.e. (1), (2), and (3) of Theorem \ref{thm:Tran},
there exists a unique mixed dimer configuration $D$ in poset $P$  (see Section \ref{sec:poset})
such that $D$ is associated to this $\underline{e}$. Further, the mixed dimer configuration $D$ is obtained by the following procedure: 
         \begin{enumerate}
         \item Given $(Q,\underline{d})$, construct the corresponding base graph $G$ and the minimal matching $M_-$ associated to $\underline{d}$ via the procedure described in Section \ref{sec:base_graph}.
         Weight the edges of $G$ by $w(e)$ for $e \in E(G)$.
         \item From this weighted minimal matching, take the positive entry $e_{i_1} > 0$ in $\underline{e}$ of minimal index. Flip tile $i_1$ $e_{i_1}$ number of times and transform its edge weights as prescribed in Definition \ref{defn:weightedflip} with respect to the black and white coloring of the vertices to arrive at the mixed dimer configuration $D_1$. (Note that if this flip (or flips) were not ``allowable," then we will obtain ``antiedges", i.e. edges with negative weight we highlight in yellow.)
         \item From $D_1$, take the next positive entry $e_{i_2} > 0$ in $\underline{e}$ of minimal index. Flip tile $i_2$ $e_{i_2}$ number of times and transform its edge weights as prescribed in Definition \ref{defn:weightedflip} to arrive at  the mixed dimer configuration $D_2$.
         \item Iterate this process until we have exhausted all positive entries in $\underline{e}$. The resulting mixed dimer configuration $D$ will only have non-negative weights, i.e. no yellow edges will remain. Moreover, it will be an element of $P$. The resulting configuration $D$ is the mixed dimer configuration associated to $\underline{e}$.
     \end{enumerate}
    \end{thm}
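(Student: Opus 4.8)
The plan is to show three things about the procedure in Theorem \ref{thm:EtoD}: (a) it is well-defined, in that the final configuration $D$ does not depend on any choices made during execution (there are none beyond the fixed ``minimal-index'' ordering, but we should verify the output is intrinsic); (b) the final configuration $D$ really lands in $P$ — that is, all yellow antiedges cancel, $D$ satisfies the valence condition, and $D$ is node-monochromatic; and (c) $D$ is the \emph{unique} element of $P$ associated to $\underline{e}$ via the monomial-reading in Theorem \ref{thm:main}, namely the mixed dimer configuration for which tile $i$ has been flipped $e_i$ times. Together these give existence and uniqueness.

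The first step is bookkeeping: I would set up the weighted edge function $w(\cdot)$ on the base graph $G$ and track how a weighted flip at tile $i$ changes $w$ and changes the exponent vector. Because a weighted flip at tile $i$ adds $1$ to the weights of the two ``white-to-black clockwise'' edges of tile $i$ and subtracts $1$ from the two ``black-to-white clockwise'' edges (and does the reverse on the next flip of the same tile), the key linear-algebra observation is that the multiset of edge weights after performing $e_i$ flips at every tile $i$ depends only on the total flip counts $(e_0,\dots,e_{n-1})$ and not on the order — the flip operators at distinct tiles commute at the level of the weight vector, even though they do not commute as honest operations on mixed dimer configurations. This is precisely the point of introducing antiedges, as the remark before the theorem says, so I would make this commutativity statement precise and prove it by noting each tile's flip operator acts by adding a fixed integer vector (depending on parity of the number of previous flips at that tile) to $w$, and summing these contributions is order-independent. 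This immediately gives that the output $D$ is intrinsic to $\underline{e}$ and matches the configuration obtained in $P$ by performing $e_i$ allowable flips at tile $i$ in the ``correct'' order — once we know such an order exists.

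The heart of the argument, and the main obstacle, is step (b): showing the final weights are all non-negative (no leftover antiedges) \emph{and} that the intermediate or final configuration is node-monochromatic, precisely when $\underline{e}$ satisfies Tran's conditions (1)--(3). I expect this to split into a local analysis tile-by-tile along the type-$A$ snake-graph part of $G$ together with a special analysis near the hexagon. The idea is: condition (1) ($0 \le \underline{e} \le \underline{d}$) controls how many times each tile can legitimately be flipped; condition (2) (all arrows acceptable, $e_i - e_j \le \max(d_i-d_j,0)$) is exactly the inequality guaranteeing that when we process tiles in increasing index order, the edge straddling tiles $i$ and $j$ never goes negative — i.e. the flips we are forced to do later ``catch up'' to earlier flips so that deficits cancel; and condition (3) ($\nu(C)\le 1$ on components of the $(2,1)$-subgraph $S$) is what prevents a polychromatic path from forming through the hexagon, since a critical arrow corresponds exactly to a place where flipping would drag a node of one color across to meet a node of another. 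I would prove non-negativity by induction on the number of processed positive entries, using acceptability to show each newly created deficit on an edge between tiles $i$ and a larger-indexed neighbor $j$ is of size at most $e_j$ and hence is cleared when tile $j$ is later flipped $e_j$ times; and I would prove node-monochromaticity by tracing, in each of the node-placement cases of Section \ref{sec:nodes}, which flip sequences move the green node and checking that condition (3) forbids exactly the offending sequences (mirroring Remark \ref{rem:Exa}, where $u_2u_3$ is excluded because flipping the hexagon after tile $2$ connects a green node to a red node). For uniqueness, I would argue that the monomial $u_0^{e_0}\cdots u_{n-1}^{e_{n-1}}$ determines the flip counts, hence by the commutativity observation determines $D$; and conversely any $D' \in P$ producing the same monomial has the same flip counts at each tile and so equals $D$. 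The delicate part throughout is confirming that the reachability-by-allowable-flips requirement of $P$ is met — i.e. that there is \emph{some} order in which all these flips are genuinely allowable (never creating a real antiedge) — and I would handle this by showing the increasing-index order, or a mild modification of it near the hexagon dictated by where the critical arrow sits, always works under Tran's conditions.
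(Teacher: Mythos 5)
Your overall architecture matches the paper's: you establish order-independence of the final edge weights (the paper does this via the explicit formulas $m_{i,j}=n_{i,j}$ of Lemmas \ref{lemma:MijNij} and \ref{lemma:MinfinityN}), you identify acceptability with nonnegativity of interior weights and $0\le\underline{e}\le\underline{d}$ with nonnegativity of boundary weights, and you handle node-monochromaticity by a case analysis showing that a polychromatic path forces critical arrows, contradicting condition (3). Uniqueness via ``the monomial determines the flip counts, which determine the weights'' is also consistent with the paper.

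The genuine gap is in how you certify the reachability-by-allowable-flips requirement for membership in $P$. You claim that acceptability guarantees the increasing-index order never drives an edge weight negative, and you propose to prove reachability by showing that this order (or a ``mild modification near the hexagon'') is itself an allowable sequence. This is false as stated: acceptability only controls the \emph{final} weights, not the intermediate ones, and the paper's own Example \ref{ex:etoD} exhibits exactly this failure --- with $\underline{d}=(1,1,2,2,1,1)$ and $\underline{e}=(0,0,1,2,1,1)$, processing tiles in increasing index order produces antiedges midway, and the allowable sequence that actually works is $3,2,4,5,3$, interleaving the two hexagon flips around the other tiles rather than performing them consecutively. So the ``delicate part'' you flag is precisely where your plan has no working mechanism: you need to prove that \emph{some} allowable order exists, and neither the increasing-index order nor an unspecified local modification does the job. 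The paper's argument here is a reverse induction on $|\underline{e}|$: since $Q$ is an oriented tree, there is a vertex $j$ with $e_j>0$ and $e_j>e_i$ for every arrow $i\to j$; one checks via the $n_{i,j}$ and $n_{i,\infty}$ formulas that $\underline{e}-\underline{e}_j$ still gives nonnegative weights, so by induction the corresponding configuration $D'$ is reachable by allowable flips, and the final flip of tile $j$ from $D'$ to $D$ is allowable because both configurations have nonnegative weights. Without this (or an equivalent) selection argument, your proof of $D\in P$ is incomplete.
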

    
Before proving this theorem, we compute an example.

\begin{ex}
\label{ex:etoD}
Suppose $\underline{d} = (1,1,2,2,1,1)$ and $Q$ is superimposed on the first mixed dimer configuration. Let $\underline{e} = (0,0,1,2,1,1)$ which we see satisfies conditions (1)-(3) in \cite{tran}. Performing the flips in left to right order as prescribed in Theorem \ref{thm:EtoD}, we obtain the following sequence of weighted flips

\begin{center}
    \includegraphics[scale=.25]{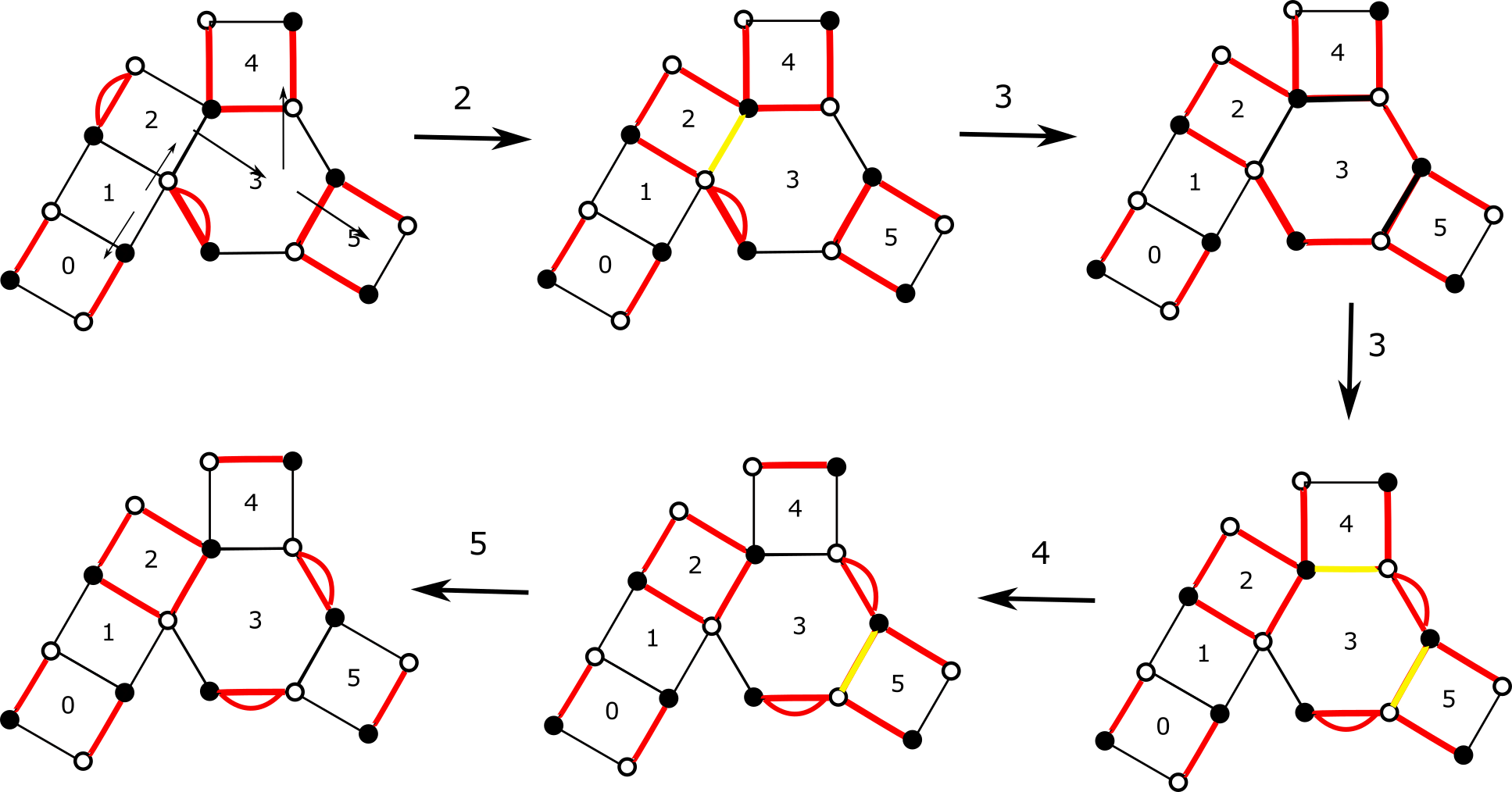}
\end{center}
where yellow edges have weight $-1$ that we refer to as ``antiedges" and the last dimer configuration $D$ is the resulting mixed dimer configuration. Note that, in this example, we could flip tiles in the order 3,2,4,5,3 to obtain a sequence of allowable flips yielding $D$. 
\end{ex}

In order to prove the above statement, we will utilize the following lemmas:

\begin{lemma}
\label{lemma:MijNij}

Let $\underline{d} \in \Phi_+$ and let $0 \leq \underline{e} \leq \underline{d}$. Suppose that $i \to j$ in $Q$ and for any $0 \leq i \leq n-1$, let $D$ be the mixed dimer configuration obtained by flipping tile $i$ $e_i$ number of times from $M_-$. Let $m_{i,j}$ be the number of edges distinguished in $D$ on the edge between tiles $i$ and $j$. Then
     
     $$m_{i,j} = \max(d_i-d_j,0) + (e_j-e_i) =: n_{i,j}.$$
\end{lemma}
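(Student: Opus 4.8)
The plan is to track a single edge --- the one straddling tiles $i$ and $j$ --- through the two separate operations that produce $D$ from $M_-$: namely the $e_i$ flips at tile $i$ and the $e_j$ flips at tile $j$ (all other flips are irrelevant to this edge, so by the ``antiedge'' bookkeeping of Definition \ref{defn:weightedflip} the order does not matter and we may assume we flip tile $i$ first, then tile $j$). First I would compute $w(e_{i,j})$, the multiplicity of this edge in the minimal matching $M_-$ itself, directly from the construction of $M_-$ in Section \ref{sec:min_match}. Here the hypothesis $i \to j$ in $Q$ is what pins down the bipartite coloring: since we ``see white on the right'' of the arrow $i \to j$, the edge between tiles $i$ and $j$ is oriented in a definite way relative to the clockwise boundary traversals used to build $M_1$ and $M_2$. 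I expect the answer to be exactly $\max(d_i - d_j, 0)$: the edge gets picked up in the $M_1$-stage only when $d_i, d_j \geq 1$ contributes it as a black-to-white clockwise boundary edge of $G_1$ (which, given the orientation, happens in a way governed by whether the edge is interior or boundary of $G_1$), and it gets a second copy from the $M_2$-stage exactly when $d_i = 2$ but $d_j < 2$. A careful case analysis on $(d_i, d_j) \in \{(1,1),(2,1),(2,2),(1,2)\}$ (the only possibilities since $\underline d \in \Phi_+$ forces the support to be connected and entries to differ by at most the root-system constraints allow) should confirm $w(e_{i,j}) = \max(d_i-d_j,0)$ in every case.

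Next I would compute the net change in the multiplicity of $e_{i,j}$ caused by the flips. Each flip at tile $i$ toggles one copy of each of the four boundary edges of tile $i$; in particular, one flip at tile $i$ changes the count on $e_{i,j}$ by $+1$ if $e_{i,j}$ is a ``white-to-black clockwise'' edge of tile $i$ (absent in the minimal configuration restricted to that tile) and by $-1$ if it is ``black-to-white clockwise.'' The key observation is that whatever sign the flip at tile $i$ contributes, the flip at tile $j$ contributes the opposite sign, because $e_{i,j}$ is simultaneously a boundary edge of both tiles and the clockwise orientations around the two tiles are opposite along their shared edge. So after $e_i$ flips at $i$ and $e_j$ flips at $j$ the multiplicity changes by $\epsilon(e_j - e_i)$ for a sign $\epsilon \in \{+1,-1\}$ determined by the local picture. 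Then $m_{i,j} = w(e_{i,j}) + \epsilon(e_j - e_i) = \max(d_i-d_j,0) + \epsilon(e_j-e_i)$, and it remains to check $\epsilon = +1$. This should again follow from the ``white on the right'' convention together with the direction $i \to j$: the arrow direction fixes which of the two orientations the shared edge has relative to the tile-$i$ clockwise traversal, and the definition of the weighted flip (which edges increase, which decrease) is exactly calibrated so that flipping the \emph{source} tile of the arrow removes the edge while flipping the \emph{target} tile adds it, or vice versa, consistently giving the $+(e_j - e_i)$ sign.

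The main obstacle I anticipate is the bipartite-coloring bookkeeping: making the phrase ``the clockwise black-to-white boundary edge of $G_1$'' and ``the weighted flip adds this edge'' into precise statements about the shared edge $e_{i,j}$ requires carefully fixing, once and for all, how the arrow $i \to j$ determines the colors of the two endpoints of $e_{i,j}$ and how those colors sit relative to a clockwise walk around tile $i$ versus tile $j$. I would handle this by drawing the one local picture of two adjacent tiles with the arrow $i \to j$ perpendicular to their shared edge, labeling the four relevant vertices black/white by the stated convention, and reading off both (a) whether $e_{i,j}$ lies in $M_1$ (resp.\ picks up a second copy in $M_2$) and (b) the sign of its weight change under a flip at $i$ and under a flip at $j$. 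Once that single diagram is in hand, the identity $m_{i,j} = \max(d_i-d_j,0) + (e_j-e_i)$ drops out; the remaining cases ($d_j = 2$, various $d_i$) are minor variations on the same diagram. A secondary, purely cosmetic, point to address is that when $d_i$ or $d_j$ equals $0$ the corresponding tile is not in the support and $e_i$ or $e_j$ is forced to $0$, so the formula degenerates correctly; and one should note explicitly that intermediate antiedges (negative weights) are permitted during the computation and only the final value $m_{i,j}$, which the lemma asserts equals $n_{i,j} \geq 0$ when $\underline e$ is admissible, need be non-negative --- though the lemma as stated computes $m_{i,j}$ for \emph{any} $\underline e$ with $0 \le \underline e \le \underline d$, so non-negativity of $n_{i,j}$ is a separate consequence of acceptability, not something needed in this proof.
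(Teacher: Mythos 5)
Your proposal is correct and follows essentially the same route as the paper: the paper proves the identity by induction on $|\underline{e}|$, where the base case is exactly your computation that the shared edge carries multiplicity $\max(d_i-d_j,0)$ in $M_-$ (via the black-to-white clockwise convention determined by $i \to j$), and the inductive step is exactly your observation that a weighted flip at tile $i$ decreases, and at tile $j$ increases, both $m_{i,j}$ and $n_{i,j}$ by $1$. Your non-inductive ``initial value plus net change'' packaging (using commutativity of weighted flips with antiedges) is just a rephrasing of that induction, with only a cosmetic slip in referring to ``four'' boundary edges of a tile that may be the hexagon.
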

     
\begin{proof}
In order to prove this formula holds, we proceed by induction on $|\underline{e}| := \sum_{k=0}^{n-1} e_k$. When $|\underline{e}| = 0$, i.e. $\underline{e} = (0,0, \dots, 0)$, the associated mixed dimer configuration is $M_-$. By definition of $M_-$, we distinguish boundary edges in $G_1$ and $G_2$ that are oriented black to white clockwise with respect to the bipartite coloring. This means that we only distinguish internal edges when $G_1, G_2$ are strict subsets of $G$. Suppose that $d_i > 0$, but $d_j =0$, i.e. $i \in G_1$ and possibly $G_2$ whereas $j \notin G_1$ or $G_2$. If $i \to j$ in $Q$, then locally we have that the edge straddling $i$ and $j$ is oriented black to white clockwise with respect to the tile $i$. So, this gives that $m_{i,j} = d_i$ - as we will distinguish this edge in $G_1$. If $d_i = 2$, we will distinguish it once again. Therefore, in the case that $i \to j$, we've shown
     
     $$n_{i,j} = \max(d_i-d_j,0) = d_i = m_{i,j}.$$
     
If $j \to i$, then we locally have that the edge straddling $i$ and $j$ is oriented white to black clockwise with respect to the tile $i$. So, this gives that $m_{j,i} = 0$ as we will never distinguish  this edge in $G_1$ or $G_2$. Therefore, in the case that $j \to i$, we've shown
     
     $$n_{j,i} = \max(d_j-d_i,0) = 0 = m_{j,i}$$
     
Thus, when $|\underline{e}|=0$, our formula holds. Suppose up to $k \in \nn$, we have that when $|\underline{e}| = k$, our formula holds. Now, suppose $|\underline{e}| = k+1$, and choose $\tilde{\underline{e}}$ with $|\tilde{\underline{e}}|=k$, such that $\underline{e}$ can be derived from $\tilde{\underline{e}}$ by adding $1$ to the $i^{\text{th}}$ entry of $\tilde{\underline{e}}$. On the level of the mixed dimer configuration, this means we flipped tile $i$ from the mixed dimer configuration associated to $\tilde{\underline{e}}$ to obtain the mixed dimer configuration associated to $\underline{e}$.\allowbreak \vspace{1em}

Suppose $i \to j$ in $Q$. Note that by definition of the weighted flip, we have that the weight of the edge straddling tiles $i$ and $j$ is oriented black to white clockwise on tile $i$ which gives us that the weight decreases by 1. So, we have that $m_{i,j}$ has decreased by 1 and since $e_i$ has increased by 1, we have 
     
     $$n_{i,j} = \max(d_i-d_j,0) + (e_j-(e_i+1)) = \max(d_i-d_j,0) + (e_j-e_i) - 1.$$
     
Therefore, performing a flip at tile $i$ decreased both $n_{i,j}$ and $m_{i,j}$ by 1. If $j \to i$ in $Q$, the argument is similar.
\end{proof}
     
\begin{cor}
\label{cor:accnonneg}
The values $n_{i,j} := \max(d_i-d_j,0) + (e_j-e_i)$ all satisfy $n_{i,j} \geq 0$ if and only if all weights on interior edges on the mixed dimer configuration $D$ associated to $e$ are nonnegative.  In particular, all arrows $i \to j$ are acceptable with respect to $(\underline{d}, \underline{e})$ if and only if all weights on interior edges are nonnegative on the mixed dimer configuration $D$ associated to $e$.
\end{cor}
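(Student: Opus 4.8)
The plan is to read Corollary \ref{cor:accnonneg} off directly from Lemma \ref{lemma:MijNij}, together with one structural observation about the base graph. First I would recall from Section \ref{sec:base_graph} that $G$ is assembled by placing one tile on each vertex of the type $D_n$ Dynkin diagram and gluing the tiles corresponding to adjacent vertices along a single common edge; every other edge of $G$ lies on the boundary of exactly one tile. Consequently the interior edges of $G$ (those straddling two distinct tiles) are in bijection with the edges of the Dynkin diagram, equivalently with the arrows of $Q$: the interior edge between tiles $i$ and $j$ corresponds to whichever of $i\to j$ or $j\to i$ appears in $Q$. This holds uniformly, including around the hexagonal tile $n-3$, which contributes precisely three interior edges, glued respectively to tiles $n-4$, $n-2$, and $n-1$. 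Confirming that the interior edges are exhausted in this way (with each Dynkin-adjacent pair contributing exactly one interior edge) is the one point requiring a little care, especially in the hexagonal case; it is, however, entirely routine, and once it is in hand the corollary is immediate.

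Next I would apply Lemma \ref{lemma:MijNij} edge by edge. Fix an arrow $i\to j$ of $Q$; the lemma asserts that the weight $m_{i,j}$ carried by the interior edge between tiles $i$ and $j$ in the configuration $D$ associated to $\underline e$ equals $n_{i,j}=\max(d_i-d_j,0)+(e_j-e_i)$. Here $D$ is well defined independently of the order in which the tiles are flipped, since we are using weighted flips and permitting negative weights, as noted in the remark following Definition \ref{defn:weightedflip}; moreover the local argument proving Lemma \ref{lemma:MijNij} — a case analysis at the shared edge using the bipartite orientation — applies verbatim to edges incident to the hexagon. By the previous paragraph, as $i\to j$ ranges over all arrows of $Q$ the interior edge between tiles $i$ and $j$ ranges over all interior edges of $G$, so the multiset of interior-edge weights of $D$ is exactly $\{\,n_{i,j} : (i\to j)\in Q\,\}$. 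Hence every interior-edge weight of $D$ is nonnegative if and only if $n_{i,j}\ge 0$ for every arrow $i\to j$ of $Q$, which is the first biconditional.

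Finally, for the ``in particular'' clause I would simply unwind the definition of an acceptable arrow: $i\to j$ is acceptable with respect to $(\underline e,\underline d)$ precisely when $e_i-e_j\le\max(d_i-d_j,0)$, i.e.\ when $\max(d_i-d_j,0)+(e_j-e_i)=n_{i,j}\ge 0$. Combining this with the first biconditional shows that all arrows of $Q$ are acceptable if and only if all interior-edge weights of $D$ are nonnegative, which completes the proof. The argument has no real obstacle beyond the bookkeeping in the first paragraph: the content is entirely carried by Lemma \ref{lemma:MijNij}, and this corollary is essentially a restatement of it phrased in terms of the graph $G$ rather than the vectors $(\underline d,\underline e)$.
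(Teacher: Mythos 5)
Your proposal is correct and follows essentially the same route as the paper, which states this corollary without separate proof precisely because it is an immediate consequence of Lemma \ref{lemma:MijNij} (identifying the interior-edge weight $m_{i,j}$ with $n_{i,j}$) together with unwinding the definition of an acceptable arrow. Your added bookkeeping that interior edges of $G$ biject with arrows of $Q$ is a reasonable explicit check of what the paper leaves implicit.
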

     
\begin{lemma}
\label{lemma:MinfinityN}
(Analogous formula to Lemma \ref{lemma:MijNij} with boundary edges). Let $\underline{d} \in \Phi_+$ and let $0 \leq \underline{e} \leq \underline{d}$. For any $0 \leq i \leq n-1$, let $D$ be the mixed dimer configuration obtained by flipping tile $i$ $e_i$ number of times from $M_-$. Let the outer face of our graph $G$ be indexed by $\infty$. We assign an arrow to each of the boundary edges of $G$ where the orientation of this arrow depends on the bipartite coloring of $G$ following the convention that we ``see white on the right." For example, 
     
\begin{center}
         \includegraphics[scale=.35]{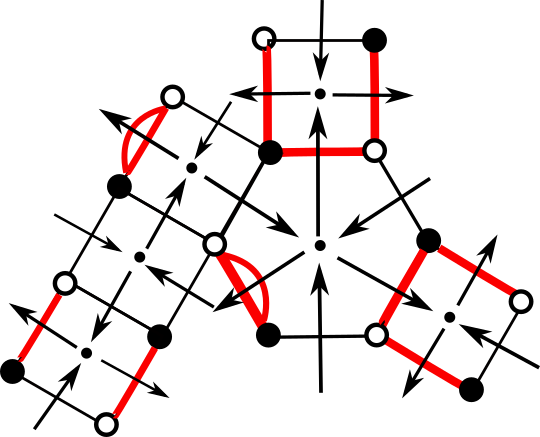}
\end{center}
We assign a weight to the edge $\alpha$ on tile $i$ as follows:
     
     $$n_{i,\infty}(\alpha) = \max(d_i,0) - e_i ~~~\text{if }i \to \infty \text{ about } \alpha$$
     $$n_{\infty,i}(\alpha) = \max(-d_i,0) + e_i ~~~\text{if }\infty \to i \text{ about } \alpha$$
     
Let $m_{i,\infty}(\alpha)$ (respectively $m_{\infty,i}(\alpha)$) be the number of edges distinguished on $\alpha$ on tile $i$ in $D$ where $i \to \infty$ about $\alpha$ (respectively $\infty \to i$ about $\alpha$.) Then for any boundary edge $\alpha$ on tile $i$,
     
     $$n_{i,\infty}(\alpha) = m_{i,\infty}(\alpha) ~~\text{ and }~~ n_{\infty, i}(\alpha) = m_{\infty,i}(\alpha).$$
\end{lemma}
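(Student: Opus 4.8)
The plan is to mirror the proof of Lemma \ref{lemma:MijNij} almost verbatim, running an induction on $|\underline{e}| = \sum_k e_k$, but now tracking the boundary edge $\alpha$ on tile $i$ rather than the interior edge between tiles $i$ and $j$. First I would establish the base case $|\underline{e}| = 0$, where the associated mixed dimer configuration is $M_-$. Here the key observation is that $M_-$ distinguishes exactly the boundary edges of $G_1$ (and $G_2$) that are oriented black-to-white clockwise. So one does a small case analysis: if $d_i = 0$ then tile $i$ lies in neither $G_1$ nor $G_2$, so $\alpha$ is not distinguished, and indeed $n_{i,\infty}(\alpha) = \max(d_i,0) - e_i = 0 - 0 = 0$ (and similarly $n_{\infty,i}(\alpha) = 0$). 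If $d_i = 1$, tile $i$ is in $G_1$ but not $G_2$; the edge $\alpha$ is distinguished exactly once when $i \to \infty$ about $\alpha$ (since then $\alpha$ is a black-to-white clockwise boundary edge of $G_1$) and zero times when $\infty \to i$; this matches $n_{i,\infty}(\alpha) = 1 - 0 = 1$ and $n_{\infty,i}(\alpha) = 0 + 0 = 0$. If $d_i = 2$, tile $i$ is in both $G_1$ and $G_2$, so $\alpha$ is distinguished twice when $i \to \infty$ (once in $M_1$, once in $M_2$) and zero times when $\infty \to i$; this matches $n_{i,\infty}(\alpha) = 2 - 0 = 2$ and $n_{\infty,i}(\alpha) = 0$. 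One subtlety in the base case: because a boundary edge $\alpha$ could a priori touch two tiles $i$ and $i'$ with different values of $d_i, d_{i'}$ (e.g. a boundary edge at the very end of the support of $\underline{d}$), I would note that the formula is being asserted tile-by-tile, so it suffices to check that the count of distinguished copies of $\alpha$ ``on tile $i$'' agrees with $n_{i,\infty}(\alpha)$, and the convention for which copies count toward tile $i$ is exactly what makes this work out; I'd want to be careful stating this cleanly.

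For the inductive step, suppose the formula holds for all $\tilde{\underline{e}}$ with $|\tilde{\underline{e}}| = k$, and let $|\underline{e}| = k+1$ be obtained from such a $\tilde{\underline{e}}$ by adding $1$ in position $i$ — i.e. the mixed dimer configuration for $\underline{e}$ is obtained from that of $\tilde{\underline{e}}$ by one weighted flip of tile $i$. By the weighted-flip rule of Definition \ref{defn:weightedflip}, flipping tile $i$ exchanges (one copy of) each black-to-white clockwise edge of tile $i$ for the corresponding white-to-black clockwise edge. So for the boundary edge $\alpha$ on tile $i$: if $i \to \infty$ about $\alpha$ (meaning $\alpha$ is black-to-white clockwise on tile $i$), its weight $m_{i,\infty}(\alpha)$ decreases by $1$; meanwhile $e_i$ increased by $1$, so $n_{i,\infty}(\alpha) = \max(d_i,0) - (e_i+1) = [\max(d_i,0) - e_i] - 1$ also decreases by $1$, preserving the equality. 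Symmetrically, if $\infty \to i$ about $\alpha$, the weight $m_{\infty,i}(\alpha)$ increases by $1$ and $n_{\infty,i}(\alpha) = \max(-d_i,0) + (e_i+1)$ increases by $1$. For any boundary edge $\beta$ not incident to tile $i$, neither side changes. A flip of tile $j \neq i$ (for the case $\tilde{\underline{e}}$ itself was reached through tile $i$ as the last step but we need the statement for the flip-at-$i$ direction) — actually since the lemma only concerns the configuration obtained by flipping tile $i$ a total of $e_i$ times across all $i$, and flips at distinct tiles affect disjoint sets of boundary edges, the bookkeeping is clean: a flip at tile $j$ only touches boundary edges incident to tile $j$, and there it matches by the same computation with $i$ replaced by $j$.

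The main obstacle I anticipate is not the algebra — that is a direct transcription of the interior-edge argument — but rather the careful handling of the orientation convention ``see white on the right'' at the boundary, together with the hexagon tile $n-3$, whose boundary edges behave slightly differently from the square tiles, and the special boundary edges near tiles $n-2$ and $n-1$. I would want to verify explicitly that the assignment of an arrow $i \to \infty$ or $\infty \to i$ to each boundary edge via the bipartite coloring is consistent with the black-to-white-clockwise description of $M_-$ (which is really the heart of the base case), and that the weighted-flip rule applied at the hexagon still exchanges boundary edges in the expected way. Once these conventions are pinned down, the induction is routine and essentially identical to Lemma \ref{lemma:MijNij}. I would also remark, as an analogue of Corollary \ref{cor:accnonneg}, that nonnegativity of all the $n_{i,\infty}(\alpha)$ and $n_{\infty,i}(\alpha)$ is equivalent to $0 \leq \underline{e} \leq \underline{d}$ componentwise (since $n_{i,\infty}(\alpha) \geq 0 \iff e_i \leq d_i$ and $n_{\infty,i}(\alpha) \geq 0 \iff e_i \geq 0$), which is precisely Tran's condition (1) and is exactly what will be needed when we assemble the full bijection.
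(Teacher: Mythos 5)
Your proposal is correct and follows essentially the same route as the paper: induction on $|\underline{e}|$, with the base case read off from the definition of $M_-$ (a boundary edge oriented black-to-white clockwise on tile $i$ is distinguished exactly $d_i$ times, and never when oriented white-to-black), and the inductive step tracking how one weighted flip at tile $i$ changes both $m$ and $n$ by the same $\pm 1$ on the boundary edges of that tile. Your worry about a boundary edge touching two tiles is moot (a boundary edge borders exactly one tile, the other face being $\infty$), and your closing remark is precisely the paper's subsequent corollary that $0 \leq \underline{e} \leq \underline{d}$ forces no antiedges on the boundary.
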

\begin{proof}
The proof is similar to that of Lemma \ref{lemma:MijNij}.
\end{proof}
     
\begin{rmk}
Note that if we set $(d_\infty, e_\infty) = (0,0)$, our formula for the number of boundary edges in Lemma \ref{lemma:MinfinityN} coincides with our formula for the number of internal edges in Lemma \ref{lemma:MijNij} as 
     $$n_{i,\infty}(\alpha) = \max(d_i-d_\infty,0) + (e_\infty -e_i) ~~~\text{if }i \to \infty \text{ about } \alpha$$
     $$n_{\infty, i}(\alpha) = \max(d_\infty-d_i,0) + (e_i-e_\infty) ~~~\text{if }\infty \to i \text{ about } \alpha$$
     
\end{rmk}
     
\begin{cor}
Whenever $0 \leq \underline{e} \leq \underline{d}$, there are no antiedges on the boundary.
\end{cor}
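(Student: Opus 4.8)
The plan is to deduce the claim directly from Lemma~\ref{lemma:MinfinityN}, using only the hypothesis $0 \leq \underline{e} \leq \underline{d}$ together with the fact that every entry of a type $D_n$ positive root is nonnegative. Recall that an ``antiedge'' on the boundary is, by definition, a boundary edge whose weight in the mixed dimer configuration $D$ is strictly negative; equivalently, for a boundary edge $\alpha$ on tile $i$, this means that $m_{i,\infty}(\alpha) < 0$ or $m_{\infty,i}(\alpha) < 0$, according to which orientation $\alpha$ carries under the ``white on the right'' convention. By Lemma~\ref{lemma:MinfinityN} these quantities equal $n_{i,\infty}(\alpha)$ and $n_{\infty,i}(\alpha)$ respectively, so it suffices to check that both $n_{i,\infty}(\alpha) = \max(d_i,0) - e_i$ and $n_{\infty,i}(\alpha) = \max(-d_i,0) + e_i$ are nonnegative for every boundary edge $\alpha$.

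Next I would split into the two orientation cases and simplify the $\max$ terms. Since $\underline{d} \in \Phi_+$, each $d_i \in \{0,1,2\}$, so in particular $d_i \geq 0$, hence $\max(d_i,0) = d_i$ and $\max(-d_i,0) = 0$. In the case $i \to \infty$ about $\alpha$, we obtain $n_{i,\infty}(\alpha) = d_i - e_i$, which is $\geq 0$ because $\underline{e} \leq \underline{d}$ forces $e_i \leq d_i$. In the case $\infty \to i$ about $\alpha$, we obtain $n_{\infty,i}(\alpha) = e_i$, which is $\geq 0$ because $\underline{e} \geq 0$. In either case the weight of $\alpha$ in $D$ is nonnegative, so $\alpha$ is not an antiedge; since $\alpha$ was arbitrary, the corollary follows.

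There is essentially no obstacle here: the statement is a bookkeeping consequence of the two explicit formulas, and the only care needed is in correctly resolving the $\max$ expressions using $d_i \geq 0$ and matching each formula to the right orientation. The one point I would flag explicitly in the writeup is the contrast with Corollary~\ref{cor:accnonneg} for interior edges: the boundary weights are automatically controlled by $0 \leq \underline{e} \leq \underline{d}$ alone, with no acceptability hypothesis on the arrows of $Q$ required.
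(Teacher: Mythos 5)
Your proof is correct and follows exactly the route the paper intends: the corollary is an immediate consequence of Lemma \ref{lemma:MinfinityN}, resolving $\max(d_i,0)=d_i$ and $\max(-d_i,0)=0$ and using $0 \leq e_i \leq d_i$, which is precisely the computation the authors later spell out inside the proof of Theorem \ref{thm:EtoD}.
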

     
We now prove Theorem \ref{thm:EtoD}.
     
\begin{proof}
Let $Q$ be a quiver of type $D_n$ where the associated quiver is acyclic. Let $\underline{d} \in \Phi_+$. Obtain the base graph $G(Q) = G$ and the minimal matching $M_-(Q, \underline{d}) = M_-$ via the procedures prescribed in Sections \ref{sec:base_graph} and \ref{sec:min_match}. Assign weights to the edges in $M_-$ as prescribed in Definition \ref{def:mixed_dimer}. Note that we agree to associate the vector $\underline{e} = (0,0,\dots,0)$, which trivially satisfies the all of Tran's conditions, to the minimal matching $M_-$ which is in $P$ by Proposition \ref{prop:minimalinP}.\allowbreak  \vspace{1em}
     
Suppose that we have some nonzero $\underline{e}$ satisfying all of Tran's conditions. To see that the resulting mixed dimer configuration $D$ after our performing our algorithm is reachable by a sequence of allowable flips, we apply Lemmas \ref{lemma:MijNij} and \ref{lemma:MinfinityN}. We proceed by induction on $|e|$. To establish the base case, we use $M_-$ which is by definition reachable by an empty sequence of allowable flips. Suppose that up to $k \in \nn$, our algorithm yields a mixed dimer configuration that is reachable by a sequence of allowable flips. Now suppose that $|e| = k+1$ and that we have added 1 to the $i^{\text{th}}$ entry of $\underline{e}$, i.e. flipped tile $i$. Note that as $i \to j$ is acceptable with respect to $(\underline{d}, \underline{e})$, we have that $e_i - e_j \leq \max(d_i-d_j,0)$,  i.e. $n_{i,j} \geq 0$. By Lemma \ref{lemma:MijNij}, we have that $n_{i,j} = m_{i,j}$ which implies that the number of edges straddling tiles $i$ and $j$ must be non-negative. Moreover, by Lemma \ref{lemma:MinfinityN}, since $0 \leq \underline{e} \leq \underline{d}$, we have that for any boundary edge $\alpha$ on tile $i$, $n_{i,\infty}(\alpha) = \max(d_i,0) - e_i = d_i - e_i \geq 0$ giving that $m_{i,\infty}(\alpha)$, the number of edges on $\alpha$ in $D$ is non-negative. Similarly, $n_{\infty,i}(\alpha) = \max(-d_i,0) + e_i = 0+e_i \geq 0$ giving that $m_{\infty,i}(\alpha)$, the number of edges on $\alpha$ in $D$ is non-negative. Hence, the resulting mixed dimer configuration $D$ has edges with all non-negative weights. With this, we claim that there exists some reading of the entries of $\underline{e}$ (not simply left to right as the algorithm prescribes) which yields a mixed dimer configuration with non-negative weights at each flip along the way in the sequence. \allowbreak  \vspace{1em}

Let $j$ denote the label of a vertex of the quiver $Q$ such that $e_j>0$ and such that for any arrow pointing $i \to j$, we have $e_j > e_i$.  Note that since $|e|  = k+1 > 0$ and $Q$ is an oriented tree, it follows that such a vertex $j$ exists.  Consequently, if we let $\underline{e'}$ be the result of subtracting the $j^{\text{th}}$ unit vector from $\underline{e}$, we see that (1) for every arrow $i \to j$, we have  
     $n_{i,j}' = \max(d_i-d_j,0) + (e_j-1-e_i)$ is nonnegative; (2) for every arrow $j \to i$, we have 
     $n_{j,i}' = \max(d_j-d_i,0) + (e_i - e_j+1)$ is nonnegative; (3) the value $n_{i, \infty}' = d_i - e_i$ is nonnegative, and (4) the value $n_{\infty, i}' = e_i$ is nonnegative.\allowbreak  \vspace{1em}
     
Hence, utilizing Lemmas \ref{lemma:MijNij} and \ref{lemma:MinfinityN}, we see that $e'$ corresponds to a mixed dimer configuration $D'$.  Since $|e'|=k$, by the inductive hypothesis, $D'$ is reachable by a sequence of allowable flips from $M_-$.  Hence $D$ itself is reachable from $M_-$ by a sequence of allowable flips, where we tack on a flip of tile $j$.  This last flip is allowable since both $D'$ and $D$ are mixed dimer configurations, i.e. with nonnegative weights on edges, and the difference $\underline{e} - \underline{e'}$ is the $j^{\text{th}}$ unit vector.\allowbreak  \vspace{1em}
     
Hence, any mixed dimer configuration with non-negative weights must be reachable via a sequence of allowable flips from $M_-$. Therefore, since the resulting mixed dimer configuration $D$ from our algorithm has all non-negative weights, it is reachable via a sequence of allowable flips from $M_-$.\allowbreak  \vspace{1em}
     
The last thing we need to show is that the resulting mixed dimer configuration $D$ is node-monochromatic. We again proceed by induction on $|e|$. By Proposition \ref{prop:minimalinP}, we have that $M_-$ is node-monochromatic, establishing the base case. So suppose that up to $k \in \nn$, we have that when $|e| = k$, the resulting mixed dimer configuration $D$ is node-monochromatic. Now take $\underline{e}$ with $|e| = k+1$ where we have added 1 to the $i^{\text{th}}$ entry of $\underline{e}$. Since we have previously established that this mixed dimer configuration $D$ is reachable by an allowable sequence of flips, we can assume that $D$ was obtained via a sequence of $k+1$ allowable flips from $M_-$. We claim that the only way we could have produced a path connecting nodes of different colors is if there is more than one critical arrow in $Q$ with respect to $(\underline{d}, \underline{e})$. We first show that if a path between \textcolor{red}{$u,v$} and \textcolor{blue}{$w,x$} can be formed, then the corresponding quiver must have at least two critical arrows. We start with the minimal matchings for each orientation of the last three vertices $n-3, n-2$ and $n-1$ in $Q$ and show that the only way to obtain paths between blue and red nodes via a sequence of allowable flips is when two critical arrows (namely, violating Tran's condition (3)) involving these three vertices arise:

     \begin{center}
         \includegraphics[scale=.2]{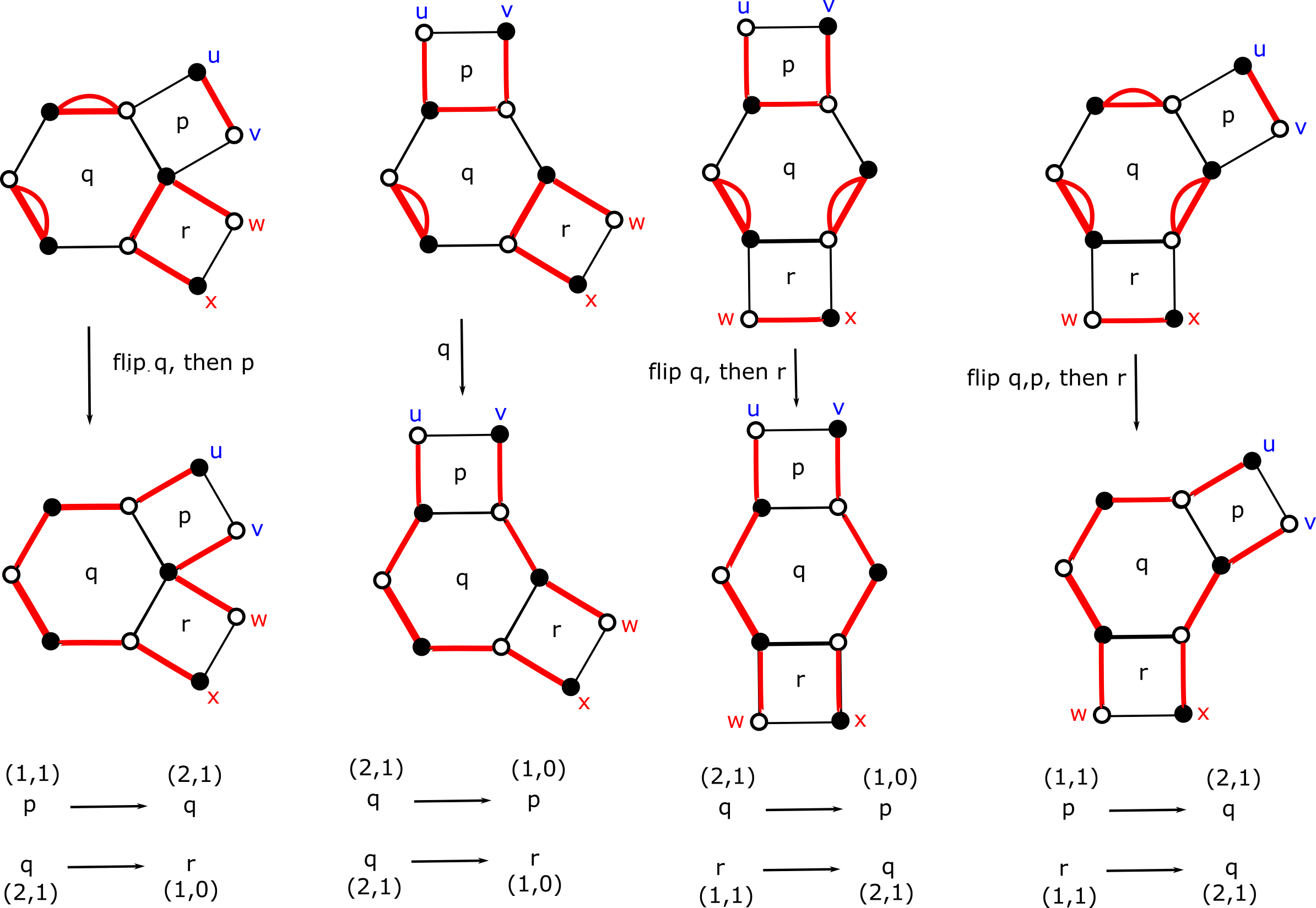}
     \end{center}
     
By the symmetry of the vertices $n-1$ and $n-2$ in $Q$, it suffices to show no paths between \textcolor{olive}{green} nodes to either \textcolor{blue}{blue} or \textcolor{red}{red} can occur. To do this, we show that if such a path exists, there would be two critical arrows: one being the arrow $i \to i-1$ where $d_i$ is the 2 in the $\underline{d}$-vector of maximal index and one involving the vertices $n-3$ and one of $n-2$ or $n-1$ (connecting to \textcolor{red}{$u,v$} or \textcolor{blue}{$w,x$} respectively). To simplify the case work, suppose that one of the arrows $n-3 \to n-2$ and $n-3 \to n-1$ are critical. With this assumption, we can potentially make paths connecting both \textcolor{red}{red} to \textcolor{olive}{green} and/or \textcolor{blue}{blue} to \textcolor{olive}{green}. We show that is this case, an additional critical arrow must be created in order to connect \textcolor{red}{red} or \textcolor{olive}{green} to \textcolor{blue}{blue}. Suppose that $i$ is the 2 of maximal index in the $\underline{d}$ vector. For the definition of where to place the nodes $y,z$, we need to examine the cases where $i, i-1, i-2$ are zigzagging or straight. \allowbreak \vspace{1em}

Note that if $i-2 <0$, then this case reduces to the straight case. Suppose that the tiles $i, i-1, i-2$ are placed straight in $G$. Then, in order to create a path from either \textcolor{red}{red} or \textcolor{olive}{green} to \textcolor{blue}{blue}, we must flip all tiles with $d_j = 2$ for $j \geq i$ to obtain edges that are from white to black clockwise with respect to $G_2$. This gives that $(d_j,e_j) = (2,1)$ for all vertices $j \geq i$. When we flip tile $i$, note that we will connect the \textcolor{olive}{green} nodes to both the \textcolor{red}{red} and \textcolor{blue}{blue} nodes as pictured below in the left column:

     \begin{center}
         \includegraphics[scale=.2]{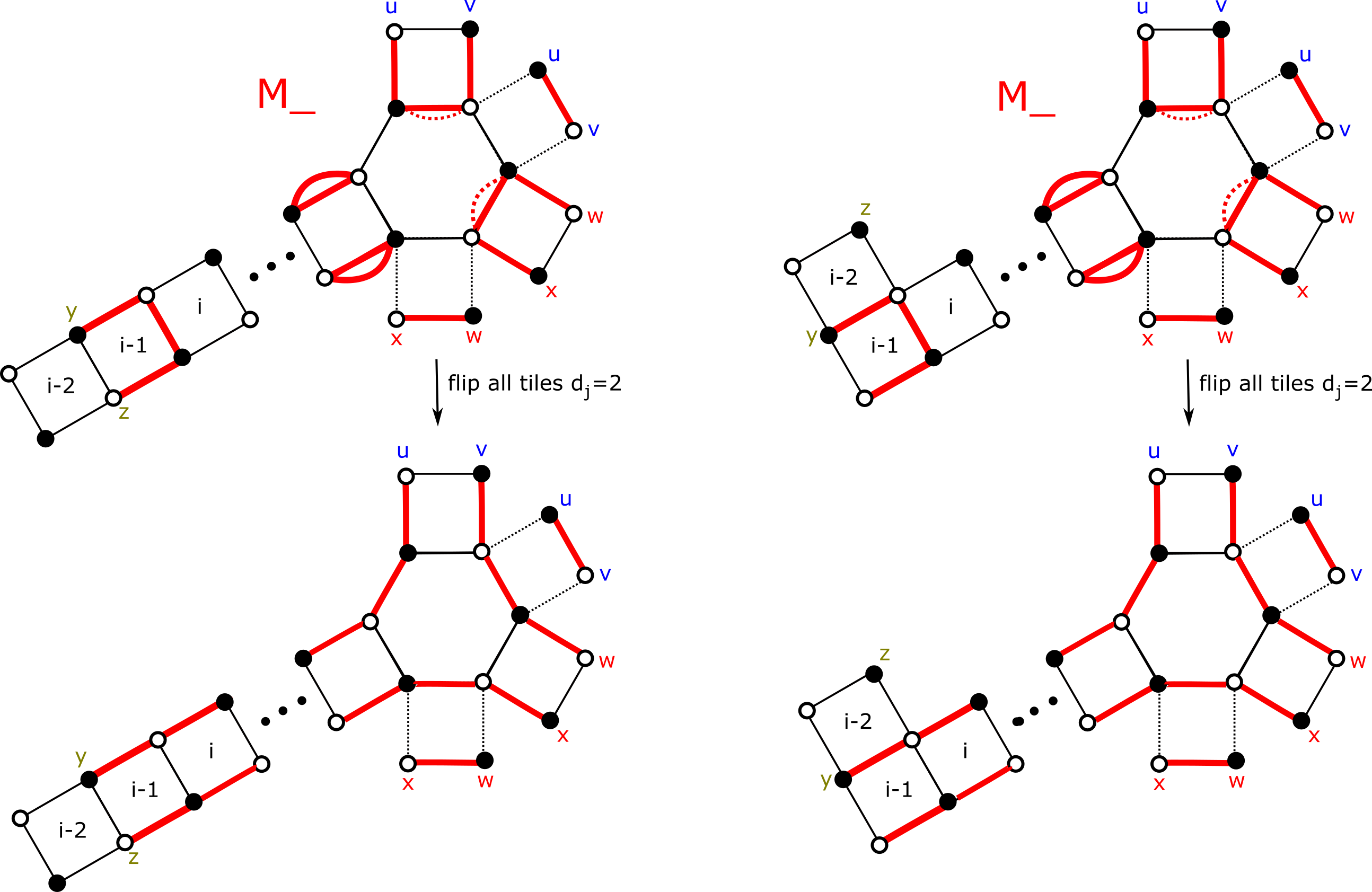}
     \end{center}
     
We see that this must create a critical arrow $i \to i-1$ as $(d_i,e_i) = (2,1)$ and $(d_{i-1},e_{i-1}) = (1,0)$. Now consider when $i, i-1, i-2$ are stacked in a zigzagging fashion in $G$. As before, in order to create a path from either \textcolor{red}{red} or \textcolor{olive}{green} to \textcolor{blue}{blue}, we must flip all tiles with $d_j = 2$ for $j \geq i$ to obtain edges that are from white to black clockwise with respect to $G_2$. This gives that $(d_j,e_j) = (2,1)$ for all vertices $j \geq i$. When we flip tile $i$, note that we will connect the \textcolor{olive}{green} node $y$ to the \textcolor{blue}{blue} node $u$ as pictured in the above figure on the right.\allowbreak  \vspace{1em}

Thus, we see that via a sequence of allowable flips from $M_-$, this mixed dimer configuration will remain node-monochromatic as $\underline{e}$ satisfies Tran's criticality condition. Hence, we have associated a mixed dimer configuration $D$ in our poset $P$ to our given vector $\underline{e}$ satisfying all of Tran's conditions. 
     \end{proof}
     
To continue our proof of Theorem \ref{thm:biject}, we now construct a map $D \longrightarrow \underline{e}$. The map sending $D \longrightarrow \underline{e}$ does not require the weighted version of flipping used in the previous theorem. For this direction of the bijection, we restrict our attention to allowable flips. This map is given by the superimposition of $D$ with $M_-$ and it is constructed via the following theorem:

\begin{thm}
\label{thm:DtoE}
Given $(Q,\underline{d}, D)$, there exists a unique way to produce $\underline{e}$ that satisfies all of Tran's conditions i.e. conditions (1), (2), and (3) of Theorem \ref{thm:Tran}.  The process is given by the following procedure: 
     
     \begin{enumerate}
         \item Let $\underline{v} = (0,0, \dots, 0) \in \zz^n$. Given $(Q,\underline{d}, D)$, construct the corresponding base graph and minimal matching $M_-$ via the procedure described in Section \ref{sec:base_graph}. 
         \item Superimpose $D$ with $M_-$ to obtain the multigraph $M_1 := D \sqcup M_-$.
         \item Using the edges distinguished in $D$ and $M_-$, create the largest cycle of length $\ell_1 > 2$, call it $C_1$. (Note that $C_1$ may not be the unique cycle of length $\ell_1$). For all faces $i$ enclosed by $C_1$, add $+1$ to $v_i$ in $\underline{v}$ and delete $C_1$ from $M_1$. Call the resulting multigraph $M_2 = M_1 \setminus C_1$.
         \item Examine $M_2$ and if there are any cycles of length $\ell_2 > 2$, find the cycle of largest length and call it $C_2$. For all faces $i$ enclosed by $C_2$, add +1 to $v_i$ and delete $C_2$ from $M_2$.
         \item Iterate this process of deleting cycles of of largest length and adding 1's to the vector $\underline{v}$ until nothing is left besides 2-cycles. The resultant vector $\underline{v}$ corresponds to $\underline{e}$.
     \end{enumerate}
\end{thm}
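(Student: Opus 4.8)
The plan is to prove Theorem~\ref{thm:DtoE} by showing that the cycle-removal procedure computes a well-defined vector $\underline{e}$, that this $\underline{e}$ is exactly the flip-count vector of $D$ (and hence equals the preimage of $D$ under the map of Theorem~\ref{thm:EtoD}), and that it satisfies Tran's conditions; combined with Theorem~\ref{thm:EtoD} this establishes the bijection of Theorem~\ref{thm:biject}. The load-bearing structural observation is that, since $D \in P$, it is obtained from $M_-$ by a sequence of allowable flips, and each allowable flip of a tile $i$ exchanges the black-to-white edges of tile $i$ for its white-to-black edges, losing exactly one and gaining exactly one edge at each vertex of the tile; hence every flip preserves the valence of every vertex. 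So $D$ has the same valence as $M_-$ at each vertex, namely $\max_i d_i$ over incident tiles, and the superimposition $D \sqcup M_-$ is an \emph{even} multigraph. Orienting each edge of $G$ from its black to its white endpoint, the $1$-chain $c_D := \sum_e (\mathrm{mult}_D(e) - \mathrm{mult}_{M_-}(e))\,[e]$ is boundary-free (the net flow through each vertex is the valence difference, which is $0$), hence $c_D$ is a nonnegative integer combination $\sum_f w_f\,\partial f$ of the bounded-face boundary cycles $\partial f$; the coefficient $w_f$ is the winding number of $D$ around face $f$ relative to $M_-$, and it depends only on $D$.

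Next I would show that the procedure of Theorem~\ref{thm:DtoE} terminates and outputs precisely $\underline{v} = (w_f)_f$, independently of which longest cycle is chosen at each step. Each simple cycle of length $>2$ appearing in the current multigraph is the boundary of a union of faces on which all current winding coefficients are positive; removing it and incrementing those faces subtracts $\partial(\text{that region})$ from the running chain while keeping every coefficient nonnegative, and always choosing a longest such cycle guarantees one exists as long as some coefficient is positive, the process ending exactly when only doubled edges ($2$-cycles) remain. Since the total increment applied to face $f$ over the run is forced to be $w_f$ regardless of order, the output is the well-defined vector $(w_f)_f$. I would then identify $w_i$ with the flip-count of tile $i$: inducting on the length of an allowable flip sequence from $M_-$ to $D$, a single flip of tile $i$ removes one copy of each black-to-white edge of tile $i$ and adds one copy of each white-to-black edge, changing $c_D$ by exactly $\pm\partial(\text{face }i)$, with the sign that increases the monomial being $+\partial(\text{face }i)$. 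Thus $w_i$ goes up by $1$ and every other $w_f$ is unchanged, so $\underline{v}$ is the flip-count vector; in particular the flip-count vector is independent of the chosen sequence.

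It remains to check that $\underline{e} := \underline{v}$ satisfies Tran's conditions (1)--(3) of Theorem~\ref{thm:Tran} and is the \emph{unique} such vector attached to $D$. Condition (1), $0 \le \underline{e} \le \underline{d}$, holds because each $e_i$ is a flip-count (hence nonnegative) and a tile $i$ can be flipped at most $d_i$ times; equivalently, by Lemmas~\ref{lemma:MijNij}--\ref{lemma:MinfinityN} the edge multiplicities of the genuine mixed dimer configuration $D$ remain nonnegative only if $\underline{e} \le \underline{d}$ (e.g.\ $m_{i,\infty}(\alpha) = d_i - e_i \ge 0$ on a boundary edge $\alpha$ of tile $i$ oriented $i \to \infty$). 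Condition (2), acceptability of all arrows, is immediate from Corollary~\ref{cor:accnonneg}, since $D$ has nonnegative weight on every interior edge. Condition (3), $\nu(C) \le 1$ for every component $C$ of $S$, is where node-monochromaticity is essential: if some $\nu(C) \ge 2$, then running the case analysis at the end of the proof of Theorem~\ref{thm:EtoD} in reverse (on the vertices $n-3, n-2, n-1$ and on the green nodes $y,z$) exhibits inside $D$ a path between two nodes of different colors, contradicting $D \in P$. For uniqueness, let $\Psi$ denote the map $\underline{e} \mapsto D$ of Theorem~\ref{thm:EtoD}; Lemmas~\ref{lemma:MijNij} and~\ref{lemma:MinfinityN} express every edge multiplicity of $\Psi(\underline{e}')$ purely in terms of $\underline{d}$ and $\underline{e}'$, so $\Psi$ is injective, and the flip-count identification above gives $\Psi(\underline{e}) = D$. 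Hence $\underline{e}$ is the unique Tran-valid vector with $\Psi(\underline{e}) = D$, which is exactly the assertion of the theorem and completes the proof of the bijection in Theorem~\ref{thm:biject}.

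I expect the main obstacle to be the well-definedness of the procedure itself: proving that at every stage a longest simple cycle of length $>2$ is actually available and removable (so greedy peeling never stalls before an all-doubled-edge state), and that nested or edge-sharing cycles are peeled in an order that still produces simple cycles at each step, so that a face genuinely wound around twice ends with $e_i = 2$. Making this precise requires a careful description of how the cycles of $D \sqcup M_-$ sit inside the hexagon-square base graph, including the hexagon tile; the homological bookkeeping with $c_D$ above is intended to reduce this to a tractable planar statement, but verifying that ``longest first is always legal'' in the presence of the hexagon and of double-wound faces is where the real work lies.
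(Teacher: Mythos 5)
Your architecture differs from the paper's in a way that is partly attractive: you first identify the output of the cycle-peeling procedure with the flip-count vector of $D$ via a chain/winding-number decomposition, and then obtain Tran's conditions (1) and (2) and uniqueness essentially for free from Lemmas \ref{lemma:MijNij}, \ref{lemma:MinfinityN} and Corollary \ref{cor:accnonneg}, whereas the paper inducts on the steps of the peeling algorithm (and, within each step, on the number of tiles enclosed by the deleted cycle) and verifies acceptability by casework on the values $(d_i, v_i)$ at the head and tail of each arrow. However, in your setup everything downstream --- $e_i \ge 0$, $e_i \le d_i$, acceptability via Corollary \ref{cor:accnonneg}, and injectivity of $\underline{e} \mapsto D$ --- is routed through the identification ``procedure output $=$ winding numbers $=$ flip counts,'' and you explicitly leave unproven the termination and choice-independence of the greedy longest-cycle peeling (that a cycle of length $>2$ is always available, that deleting it really subtracts the boundary of its enclosed region from your chain $c_D$, and that double-wound faces come out with count $2$). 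Since you made this the load-bearing step rather than a side remark, flagging it as ``where the real work lies'' leaves a genuine hole in your own proof, not a deferred technicality; the paper, for its part, only needs the weaker observation that some cycle of length $\ge 4$ exists when $D \neq M_-$, and it establishes the flip-count identification separately, in the later argument that the two maps are mutually inverse.

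The more serious gap is Tran's condition (3). What you must show is: if some connected component $C$ of $S$ has $\nu(C) \ge 2$, then $D$ contains a path between nodes of different colors. The case analysis at the end of the proof of Theorem \ref{thm:EtoD} establishes the converse implication --- a node-polychromatic path forces at least two critical arrows --- so ``running it in reverse'' is not an argument. In the paper, this converse is precisely where the bulk of the proof of Theorem \ref{thm:DtoE} lives: a double induction (on the step of the algorithm and on the number of tiles enclosed by the current cycle) combined with the case analysis of quiver orientations (the configurations (i)--(iv) and (a)--(b)), terminal versus degree-two endpoints, and the zig-zag versus straight placement of the green nodes, illustrated in Figures \ref{fig:fig2}--\ref{fig:fig11}, showing that every way of creating a second critical arrow produces a node-polychromatic configuration. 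Your proposal asserts this implication without carrying out, or even reducing, any of that casework, so as written the one condition in which node-monochromaticity of $D$ actually enters is unproven.
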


Before proving this theorem, we compute an example:
 
 \begin{ex}
       \label{ex:Dtoe}
  Consider the quiver $Q$ from Example \ref{ex:etoD} with the same positive root $\underline{d} = (1,1,2,2,1,1)$. Suppose we start with the mixed dimer configuration $D \in P$ given by 
  
  \begin{center}
      \includegraphics[scale=.25]{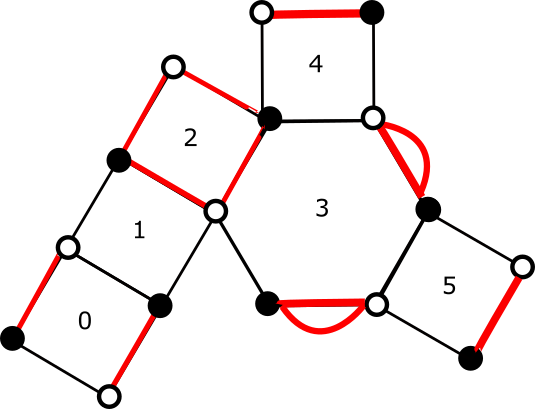}
  \end{center}
  
  \noindent which is obtained by the sequence of allowable flips at tiles 3,2,4,5,3 in order. Superimposing this with the minimal matching, distinguished in blue, we obtain the multigraph as shown here.
  
    \begin{center}
      \includegraphics[scale=.25]{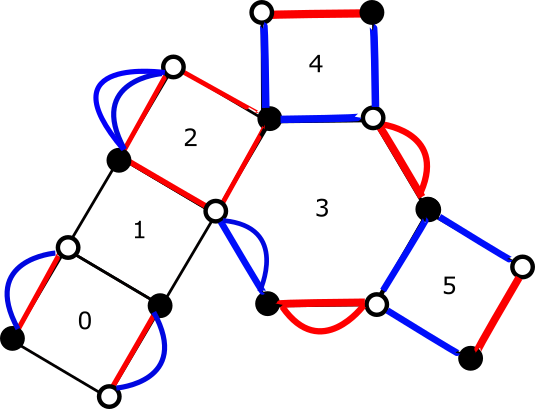}
  \end{center}
  
  Following the algorithm stated in Theorem \ref{thm:DtoE}, we see the largest cycle we can create is of length 12 enclosing tiles 2,3,4 and 5, call it $C_1$ which we distinguish in green.
  
    \begin{center}
      \includegraphics[scale=.25]{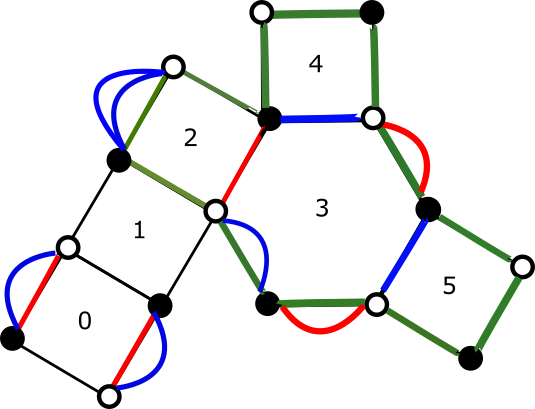}
  \end{center}
  
  Deleting $C_1$ from our multigraph yields the resulting configuration as illustrated,
  
    \begin{center}
      \includegraphics[scale=.25]{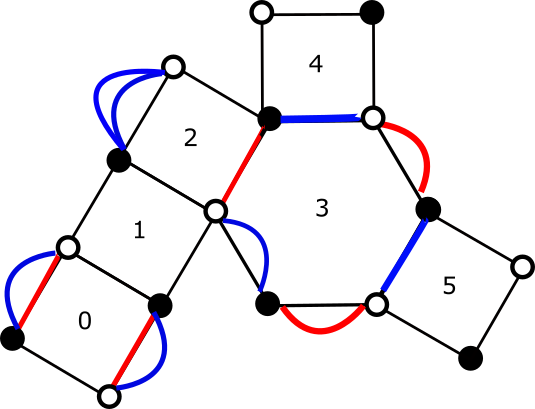}
  \end{center}
  and since this cycle enclosed tiles 2,3,4 and 5, we now have that $\underline{v} = (0,0,1,1,1,1)$. Looking for the next largest cycle, we see that the cycle enclosing the hexagon i.e. tile 3 is largest, call it $C_2$. Deleting $C_2$ from our multigraph, we obtain the figure below,
  
    \begin{center}
      \includegraphics[scale=.25]{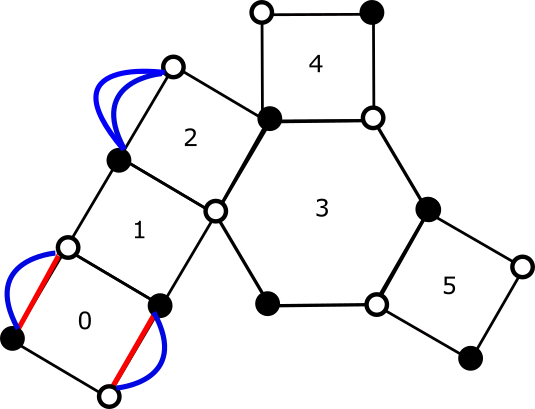}
  \end{center}
  and we add $+1$ to $v_3$, giving that $\underline{v} = (0,0,1,2,1,1)$. Because there are only 2-cycles left in our multigraph, we have that $\underline{v} = (0,0,1,2,1,1)$ is the $\underline{e}$ we associate to $D$.
 \end{ex}
 
 We now prove Theorem \ref{thm:DtoE}.
     \begin{proof}
     Let $Q$ be a quiver of type $D_n$ where the associated quiver is acyclic. Let $\underline{d} \in \Phi_+$. Obtain the base graph $G(Q) = G$
     and $M_-(Q,\underline{d}) = M_-$ via the procedure prescribed in Sections \ref{sec:base_graph} and \ref{sec:min_match}.
     Let $D \neq M_-$ be a mixed dimer configuration in $P$ and initialize $\underline{v} = (0,0,\dots,0)$.\allowbreak  \vspace{1em}

     We first illustrate why we can delete a cycle from the superimposition of $D$ and $M_-$. Note that as both $D, M_- \in P$, they individually satisfy the valence condition. Hence, we obtain that each vertex that was previously valence 2 is now valence 4 in $M_1$ and each vertex that was valence 1 is now valence 2 in $M_1$. Because $D \neq M_-$, there must exist some cycle of length at least 4 in their superimposition as $D$ differs from the minimal matching by at least one allowable flip. Hence, there exists some cycle of maximal length $\ell_1 \geq 4$. \allowbreak  \vspace{1em}
     
     We aim to show that after each iteration of cycle deletion and adding the corresponding 1's to the entries in the vector $\underline{v}$, the resultant vector satisfies all of Tran's conditions. \allowbreak \vspace{1em}
     
     \textbf{Tran Condition (1)}. We claim that Condition (1) of Theorem \ref{thm:Tran}, i.e. $0 \leq \underline{v} \leq \underline{d}$ is satisfied. Note that the only way a cycle can enclose tile $j$ in the superimposition of $D$ and $M_-$ is if $D$ differs from $M_-$ at a tile $j$. The only way this could happen is if an allowable flip occurred at tile $j$. By definition of allowable flip, we require that $d_j > 0$. Moreover, the only positive entries in $\underline{v}$ correspond to tiles enclosed by some cycle $C_i$ in our algorithm. The only way $v_j > d_j$ is if $v_j = 2$ and $d_j = 1$. However, this would imply that the tile $j$ was enclosed by two cycles in our algorithm i.e. the tile $j$ can be flipped twice. This can only happen in $d_j = 2$. Hence, $v_j \leq d_j$ for all $j$ giving that $0 \leq \underline{v} \leq \underline{d}$.\allowbreak  \vspace{1em}
      
      \textbf{Tran Condition (2).} In order to prove the other two Tran conditions, we will be using two forms of induction. We will be inducting of the number of iterations needed such that nothing is left in $D \sqcup M_-$ but a disjoint union of 2-cycles. Within this induction, we will also induct on the number of tiles that each cycle in the algorithm encloses. We can establish the base case of both inductions immediately as we agree to associate $M_-$ to $\underline{e} = (0,0,\dots,0)$ which trivially satisfies Tran's conditions (2). We first claim that deleting $C_i$ and adding the appropriate 1's to $\underline{v}'$, we obtain a vector $\underline{v}$ that satisfies Tran's acceptability condition, i.e. Condition (2) of Theorem \ref{thm:Tran}. In order to do this, we now induct on the number of tiles enclosed by $C_i$. \allowbreak  \vspace{1em}
     
     Suppose that $C_i$ encloses one face, call it $j$. In order to check that $\underline{v}$ is acceptable in this case, it suffices to check that any arrows involving vertex $j$ satisfies the acceptability condition. Note that $v_{j} = 1$ or $2$ because $j$ could have been enclosed in a previous cycle in an earlier iteration. We first address the case when $v_{j}=1$. Also, suppose that $j$ is the tail of an arrow $k \to j$. In this case, when $v_{k} \neq 2$, the acceptability is trivially satisfied as $v_{k} - v_{j} \leq 0$. So, we consider when $v_{k} =2$ which gives us that $v_{k} - v_{j} = 1$. Note that all the cases where acceptability fails, i.e. when $\max(d_k-d_j,0) = 0$, are precisely the cases where 
     
     $$n_{k,j} = \max(d_k-d_j,0) + (v_{j} - v_{k}) = 0 -1 = -1$$
     
      However, this violates Corollary \ref{cor:accnonneg} which tells us that all edges weights must be nonnegative. Hence, these cases are impossible. \allowbreak  \vspace{1em}
      
      Now suppose that $j$ is the head of an arrow, i.e. $j \to k$ in $Q$.  In this case, when $v_{k} \neq 0$, the acceptability is trivially satisfied as $v_{k} - v_{j} \leq 0$. So, we consider when $v_{k} =0$ which gives us that $v_{j} - v_{k} = 1$. Note that all the cases where acceptability fails, i.e. when $\max(d_j-d_k,0) = 0$, are precisely the cases where 
     
     $$n_{j,k} = \max(d_j-d_k,0) + (v_{k} - v_{j}) = 0 -1 = -1$$
     
      Again, this violates Corollary \ref{cor:accnonneg} as all edge weights must be nonnegative. Hence, this cases is also impossible. \allowbreak  \vspace{1em}

      Now suppose that $v_{j} = 2$. Note that $j$ is the tail of an arrow $k \to j$, the acceptability is trivially satisfied as $v_{k} - v_{j} \leq 0$ for any choice of $v_{k}$. So, it suffices to consider when $j$ is the head of an arrow, i.e. $j \to k$ in $Q$.  In this case, when $v_{k} = 2$, the acceptability is trivially satisfied as $v_{j} - v_{k} = 0$. So, we consider when $v_{k} =0$ or 1. If $v_{k} = 1$, then $v_{j} - v_{k} = 1$ and all the cases where acceptability fails, i.e. when $\max(d_j-d_k,0) = 0$, are precisely the cases where 
     
     $$n_{j,k} = \max(d_j-d_k,0) + (v_{k} - v_{j}) = 0 -1 = -1$$
     
      However, this again violates Corollary \ref{cor:accnonneg}. Similarly, if $v_{k} = 0$, then $v_{j} - v_{k} = 2$. Then, all the cases where acceptability fails, i.e. when $\max(d_k-d_j,0) \leq 1$, are precisely the cases where 
     
     $$n_{j,k} = \max(d_j-d_k,0) + (v_{k} - v_{j}) < 0$$
      violating Corollary \ref{cor:accnonneg}. Hence, these cases are impossible. Therefore, we have established the base case for our induction on the number of tiles enclosed by $C_i$.\allowbreak  \vspace{1em}
      
      Now, assume that if $C_i$ enclosed $k$ tiles, then $\underline{v}$ satisfies Tran's acceptability condition. Now, suppose that $C_i$ encloses $k+1$ tiles, $t_1, \dots, t_{k+1}$. We aim to show that $\underline{v} + \underline{e_{t_{k+1}}} = \underline{y}$ satisfies Tran's acceptability condition. \allowbreak  \vspace{1em}
      
      Again, by our inductive hypothesis, it suffices to show that any arrow with vertex $t_{k+1}$ still satisfies the acceptability condition. Since tile $t_{k+1}$ is enclosed by $C_i$ and may have been enclosed by another cycle in a previous iteration, we have that $y_{t_{k+1}} \geq 1$. If $j$ is the other vertex in an arrow involving $t_{k+1}$, we also have to consider all possible values of $d_j,y_j, d_{t_{k+1}}$ as well as if $t_{k+1}$ is the head or tail of the arrow. First, suppose that $y_{t_{k+1}} = 1$ which means that $d_{t_{k+1}} = 1$ or 2. Next, suppose that $t_{k+1}$ is the tail of an arrow, i.e. $j \to t_{k+1}$ in $Q$. When $y_j < 2$, the acceptability condition is trivially satisfied as $y_j - y_{t_{k+1}} \leq 0$. So suppose that $y_j = 2$ and since $y_j \leq d_j$, this implies that $d_j = 2$. As $d_j \geq 1$, the acceptability condition is satisfied in all cases. \allowbreak  \vspace{1em}
      
      Now, still suppose that $y_{t_{k+1}} = 1$, but now suppose that $t_{k+1}$ is the head of an arrow, i.e. $t_{k+1} \to j$ in $Q$. When $y_j > 0$, the acceptability condition is trivially satisfied as $y_{t_{k+1}} - y_j \leq 0$. So suppose that $y_j = 0$, note that if $d_{t_{k+1}} - d_j \geq 1$, the acceptability condition is satisfied. When $d_{t_{k+1}} - d_j \leq 0$, this means that  
      
      $$n_{t_{k+1},j} = \max(d_{t_{k+1}}-d_j,0) + (y_j-y_{t_{k+1}}) = -1$$
      violating Corollary \ref{cor:accnonneg}. Hence, these cases are impossible. Now, suppose that $y_{t_{k+1}} = 2$ which means that $d_{t_{k+1}} = 2$. Note that when $t_{k+1}$ is the tail of an arrow, i.e. $j \to t_{k+1}$ in $Q$, $y_j-y_{t_{k+1}} \leq 0$ giving that acceptability is trivially satisfied. So suppose that $t_{k+1}$ is the head of an arrow, i.e. $t_{k+1} \to j$ in $Q$. When $y_j = 2$, the acceptability condition is trivially satisfied as $y_{t_{k+1}} - y_j = 0$. So suppose that $y_j = 1$ or 2, note that as $d_{t_{k+1}} = 2$, the acceptability condition only fails if $y_j < d_j$. But this is precisely when
      
      $$n_{t_{k+1},j} = \max(d_{t_{k+1}}-d_j,0) + (y_j-y_{t_{k+1}}) = -1$$
      violating Corollary \ref{cor:accnonneg}. Hence, these cases are impossible. Therefore, we have shown that if $C_i$ encloses $k+1$ cycles, the resulting vector $\underline{y}$ satisfies Tran's acceptability condition. By induction, we have that after the $i^{\text{th}}$ step of our algorithm, $\underline{v}$ satisfies Tran's acceptability condition. Therefore, any vector obtained via this algorithm must satisfy the acceptability condition. Hence, the resultant $\underline{e}$ must be acceptable. \allowbreak  \vspace{1em}
     
     \textbf{Tran Condition (3).} We now show that the criticality condition, Tran's condition (3), is satisfied by $\underline{v}$. In order to do this, we again induct on the number of steps needed to complete our algorithm. Furthermore, we induct on the number of tiles enclosed by $C_i$ at each step. \allowbreak  \vspace{1em}
     
     To establish the base cases of both inductions, suppose that $C_1$ encloses a unique tile $j$. Then $\underline{v} = \underline{e_j}$, the unit vector with a 1 in the $j^{\text{th}}$ position. Note that if $d_j \neq 2$, no critical arrows can be formed as $S := \{i \in Q ~:~ (d_i,e_i) = (2,1)\} = \emptyset$. So, suppose that $d_j = 2$. Then, as $(d_j,v_{j}) = (2,1)$, $j \in S$ and the only way that the criticality condition could have failed is if this created two critical arrows i.e.
     
     $$k \longleftarrow j \longrightarrow \ell$$
     $$(1,0) \leftarrow (2,1) \rightarrow (1,0)$$
    where we must have $v_{k}=0=v_{\ell}$ as the only tile enclosed by a cycle is $j$. Note that since the subgraph $G_2$ associated to all tiles with $d$ entry 2 is connected, this means that $j$ must be the unique tile in $G_2$. By the structure of the positive root $\underline{d} \in \Phi_+$, we have that $j$ must be on the tile associated to the hexagon, i.e. vertex $n-3$ in $Q$.  Hence, our base graph is:\allowbreak  \vspace{1em}

    \begin{figure}[H]
    \centering 
        \includegraphics[scale=.25]{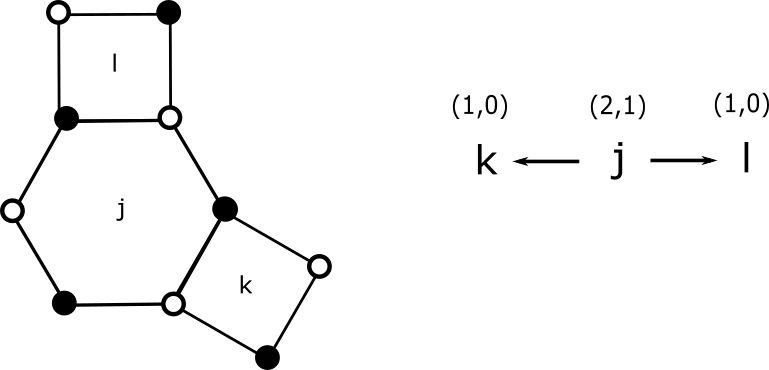}
        \caption{ }
        \label{fig:fig1}
    \end{figure}
    \noindent based on the orientation of the quiver. In this case, our minimal matching and mixed dimer configuration $D$ will locally look like the following:
    
    \begin{figure}[H]
     \centering
        \includegraphics[scale=.25]{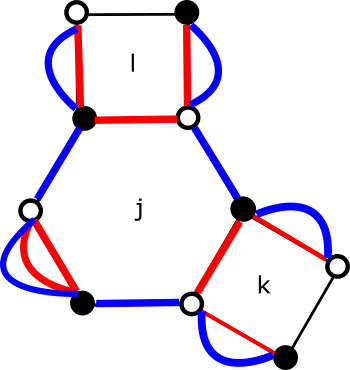}
        \caption{ }
        \label{fig:fig2}
    \end{figure}
    
    \noindent where we obtained the minimal matching, distinguished in \textcolor{red}{red}, with the convention that we distinguish  edges going black to white in the bipartite coloring and obtain $D$, distinguished in \textcolor{blue}{blue}, by performing an allowable flip at tile $j$. We now have two cases: either both $k, \ell$ are terminal vertices, i.e. are degree one in the quiver, or one of $k$ or $\ell$ is terminal and the other is not, meaning it has degree two in the quiver. Suppose that both $k, \ell$ are terminal. With the labeling of the nodes as prescribed in Section \ref{sec:nodes}, we see that the mixed dimer configuration $D$ has two paths connecting nodes of different colors.
    
    \begin{figure}[H]
    \centering
        \includegraphics[scale=.25]{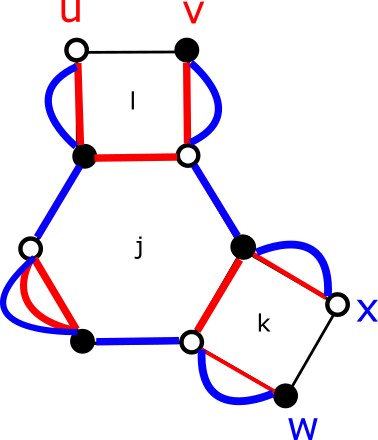}
        \caption{ }
        \label{fig:fig3}
    \end{figure}
    
    This means that $D \not\in P$ which is a contradiction. Now, without loss of generality, suppose that $k$ is the terminal vertex and $\ell$ has degree two in the quiver. Since $j$ is the minimal index with $d_j = 2$ in $\underline{d}$, the nodes $y,z$ will either be placed on tile $\ell = j-1$ or $j-2$ depending on the orientation of the arrows connecting $j, \ell, j-2$. In either case, we see paths between nodes of different colors formed:
    
    \begin{figure}[H]
    \centering
        \includegraphics[scale=.25]{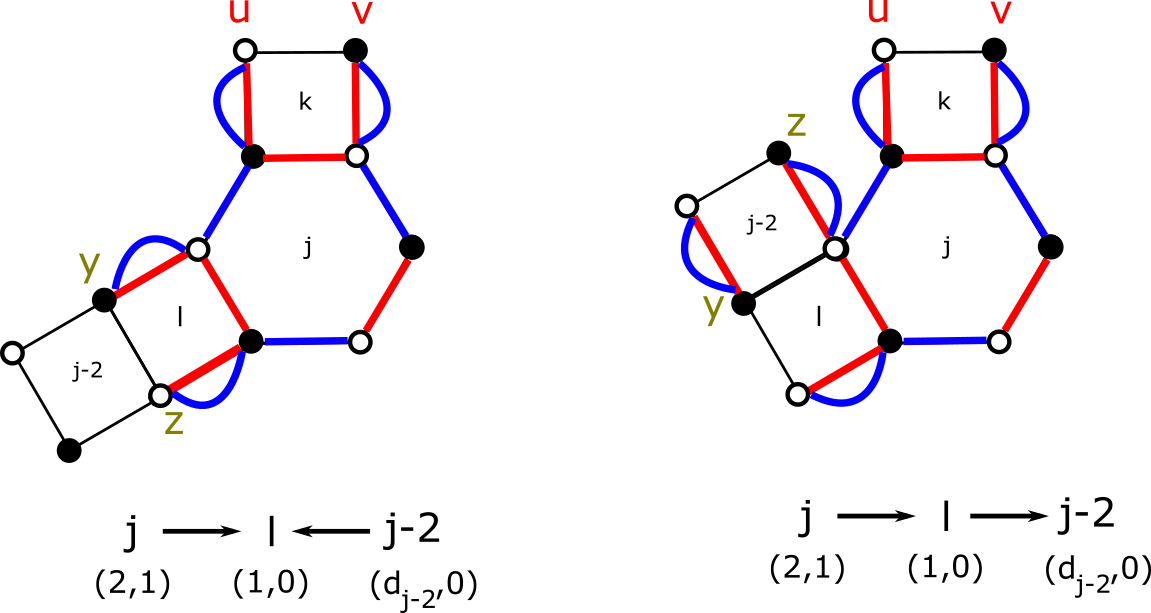}
        \caption{ }
        \label{fig:fig4}
    \end{figure}
    
    Hence, two critical arrows cannot be formed if $C_1$ encloses a single tile.  Now, suppose that if $C_1$ encloses $k$ tiles, the resulting vector $\underline{v}$ satisfies the criticality condition. We now aim to show that if $C_1$ encloses $k+1$ tiles, then the resulting vector $\underline{w} := \underline{v} + \underline{e_j}$ satisfies the criticality condition. Note that by our inductive assumption, $(\underline{v}, \underline{d})$ has at most one critical arrow. Suppose that $(\underline{v}, \underline{d})$ created 0 critical arrows. Since $\underline{w}$ only differs from $\underline{v}$ in the $j^{\text{th}}$ entry, it suffices to only check any arrows involving $j$. Moreover, we need to show that two critical arrows cannot be created by adding $+1$ to the $j^{\text{th}}$ entry of $\underline{v}$ to obtain $\underline{w}$. The only way that this could happen is if $j$ is the unique vertex in $S$, i.e. $d_j$ is the unique 2 in $\underline{d}$ with a quiver of one of the following orientations:
    
        \begin{align*}
             p~~ \longleftarrow ~~&j~~ \longleftarrow ~~q \tag{i}\\ 
            (1,0) \leftarrow (2&,1) \leftarrow (1,1)\\
             p~~ \longrightarrow ~~&j~~ \longrightarrow ~~q \tag{ii}\\
            (1,1) \rightarrow (2&,1) \rightarrow (1,0)\\
             p~~ \longleftarrow ~~&j~~ \longrightarrow ~~q \tag{iii}\\
            (1,0) \leftarrow (2&,1) \rightarrow (1,0)\\
             p~~ \longrightarrow ~~&j~~ \longleftarrow ~~q \tag{iv}\\
            (1,1) \rightarrow (2&,1) \leftarrow (1,1)
        \end{align*}
    Note that case (iv) will not be possible as if $w_p = w_q = 1$. This means that both $p$ and $q$ were enclosed by $C_1$. However, since $j$ is in between these vertices, this impossible without having enclosed $j$ as well by the connectedness of the tiles enclosed by $C_1$.\allowbreak  \vspace{1em}
    
    Note that we analyzed case (iii) in the base case. Namely, since $C_1$ enclosed a connected set of tiles, we must have that $p,q$ are terminal which puts us in the case of Figure 4.\allowbreak  \vspace{1em}
    
    Also note that cases (i), (ii) are symmetric. So it suffices to focus on case (i). Based on the orientations of the arrows, our base graph is up to isometry/ bipartite coloring is one of the following:\allowbreak  \vspace{1em}
    
    \begin{figure}[H]
    \centering
        \includegraphics[scale=.25]{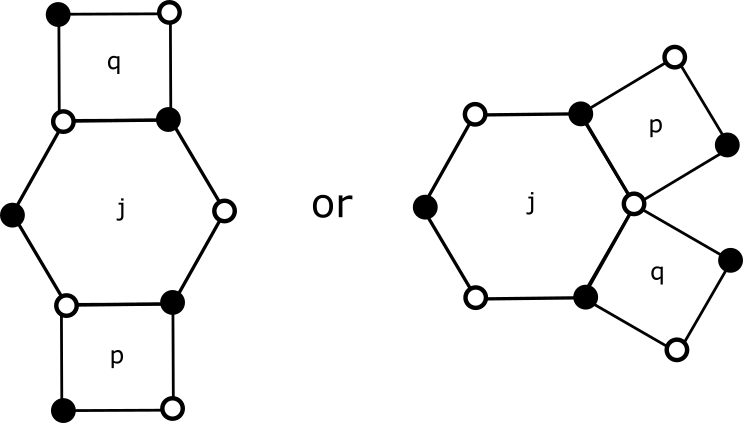}
        \caption{ }
        \label{fig:fig5}
    \end{figure}
    
    Within these cases, we have to consider if $p,q$ are terminal vertices in the quiver. This gives that for each of these base graphs, there are two cases: either both $p,q$ are terminal in $Q$ or one of $p$ or $q$ is terminal in $Q$. Suppose that the base graph is given by the left graph in Figure 6. Then, if both $p,q$ are terminal, we obtain the mixed dimer configuration $D$ by performing allowable flips at tile $j$, then tile $q$. In this case, we see $D$ is node-polychromatic as:
    
     \begin{figure}[H]
     \centering
        \includegraphics[scale=.25]{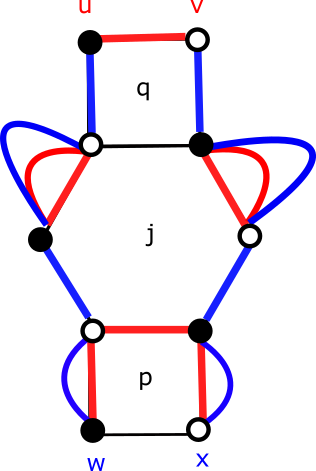}
        \caption{ }
        \label{fig:fig6}
    \end{figure}
    
    Without loss of generality, suppose that $q$ is terminal and $p$ has degree 2 in $Q$. Namely, in either case of the orientation of this arrow, we see $D$ is node-polychromatic as:\allowbreak  \vspace{1em}
    
    \begin{figure}[H]
    \centering
        \includegraphics[scale=.25]{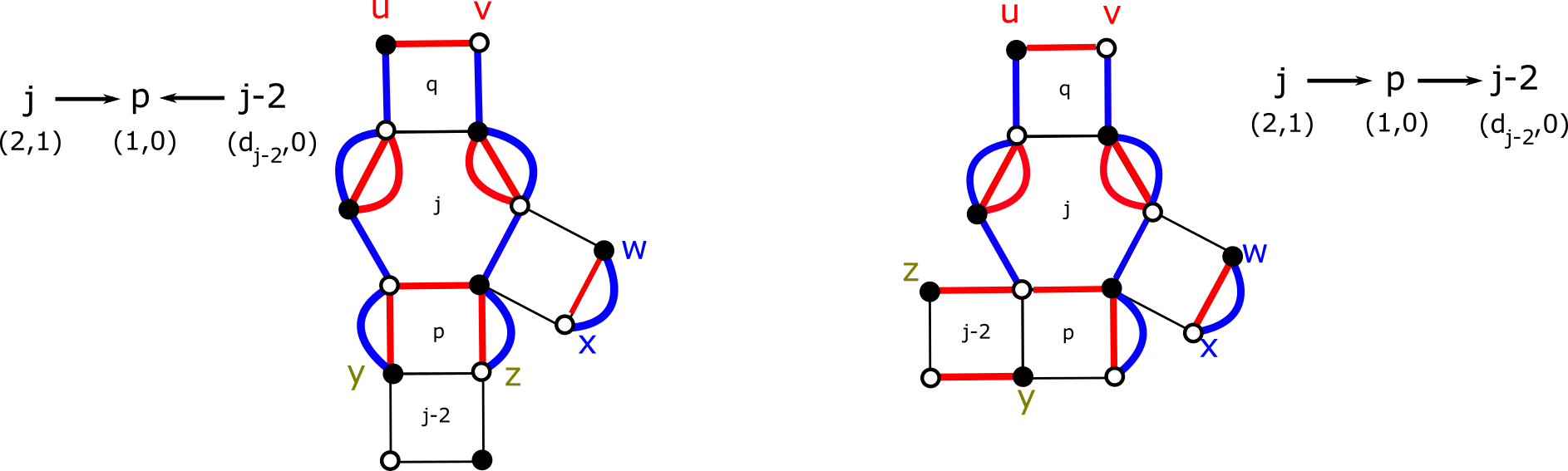}
        \caption{ }
        \label{fig:fig7}
    \end{figure}
    
    Now, suppose the base graph is given by the right graph in Figure 6 and suppose that both $p,q$ are terminal. Then, we see $D$ is node-polychromatic as:
    
    \begin{figure}[H]
    \centering
        \includegraphics[scale=.25]{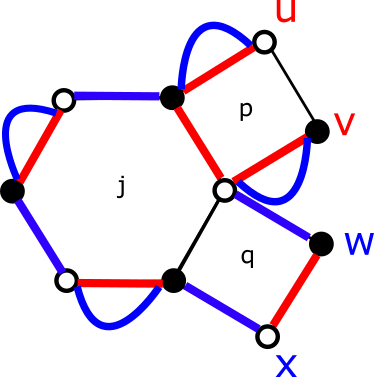}
        \caption{ }
        \label{fig:fig8}
    \end{figure}
    
    Without loss of generality, suppose that $q$ is terminal and $p$ has degree 2 in $Q$. Namely, in either case of the orientation of this arrow, we analyzed isometric copies of these cases in Figure 8 and saw they produce node-polychromatic mixed dimer configurations. \allowbreak  \vspace{1em}
    
    So, if no critical arrows were created when $C_1$ enclosed $k$ tiles, we have shown that two critical arrows are not created when $C_1$ encloses $k+1$ tiles. Now suppose that one critical arrow is created if $C_1$ encloses $k$ tiles. We now aim to show that enclosing $k+1$ tiles creates no critical arrows. Note that $S \neq \emptyset$ as one critical arrow was created. Suppose that $s \in S$, i.e. $(d_s, w_s) = (2,1)$, and as $v_s = 1$, we know that $s$ must have been enclosed by $C_1$. Note that if $j$ were not connected to $s$, then the only way another critical arrow can be made is if $(d_j,v_j) = (2,1)$. However, entries that are 2 in $\underline{d} \in \Phi_+$ are consecutive, so this cannot happen. So suppose that $j$ is connected to $s$. The only way that another critical arrow can be made is if either $j \to s$ with $(d_j,w_j) = (1,1)$ and $(d_s,w_s) = (2,1)$ or $j \leftarrow s$ with $(d_j,w_j) = (1,0)$ and $(d_s,w_s) = (2,1)$. However, note that the ladder case is impossible as $w_j=1$ by hypothesis that $j$ is enclosed by $C_1$. So, now we either have 
    
     \begin{align*}
             j~~ \longrightarrow ~~&s~~ \longleftarrow ~~t \tag{a}\\
            (1,1) \rightarrow (2&,1) \leftarrow (1,1)\\
             j~~ \longleftarrow ~~&s~~ \longrightarrow ~~t \tag{b}\\
            (1,0) \leftarrow (2&,1) \rightarrow (1,0)
        \end{align*}

        Note that case (b) was done previously where $j$ plays the role of $q$ and $s$ plays the role of $j$ in case (i) where we assumed that zero critical arrows were formed with $C_1$ enclosing $k$ tiles. So, we only need to address case (a). Again, $s \in S$ must be the unique vertex with $d_s = 2$ as the 2's are consecutive in any positive root which gives us that $s$ must be the hexagon. Note that with the orientation of the arrows, the only possibility for the base graph up to isometry is:
        
    \begin{figure}[H]
    \centering
        \includegraphics[scale=.25]{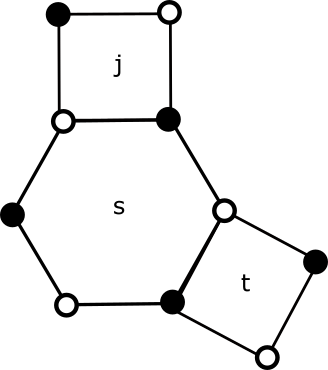}
        \caption{ }
        \label{fig:fig9}
    \end{figure}
        
        Now we consider if $j,t$ are terminal vertices. If $j,t$ are both terminal, note that a node-polychromatic mixed dimer configuration is produced as:\allowbreak  \vspace{1em}
        
    \begin{figure}[H]
    \centering
        \includegraphics[scale=.25]{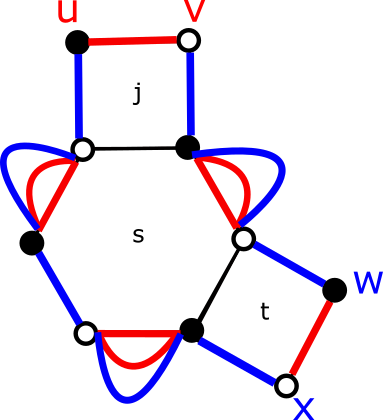}
        \caption{ }
        \label{fig:fig10}
    \end{figure}
        
        Without loss of generality, suppose that $j$ is terminal and $t$ has degree 2 in $Q$. Then, we need to consider the other arrow with vertex $t$ to use the nodes $y,z$. In these cases, we obtain paths between nodes of different colors:
        
    \begin{figure}[H]
    \centering
        \includegraphics[scale=.25]{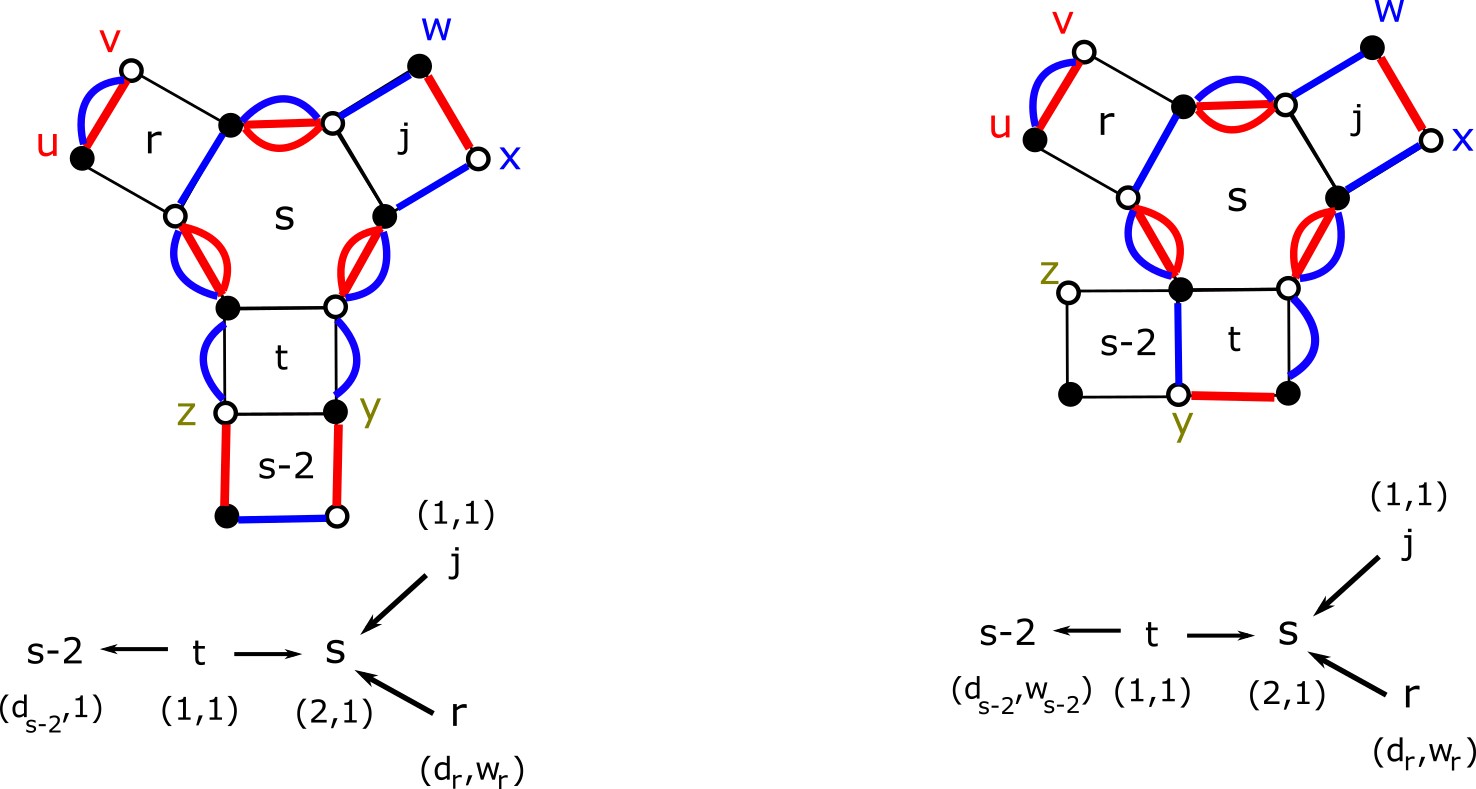}
        \caption{ }
        \label{fig:fig11}
    \end{figure}
        
        Note that in both figures, it does not matter what $(d_r,w_r)$ are because even if $w_r >0$, this does not affect the path formed between $x$ and $y$. In fact, it creates another path connecting nodes of different colors from $u$ to $z$. Therefore, we have shown that if one critical arrow was formed by enclosing $k$ tiles, then no critical arrows are created when enclosing $k+1$ tiles. Hence, by induction, we have proven that if $C_1$ encloses $n$ tiles, then $\underline{v}$ satisfies the criticality condition which establishes the base case of our induction on the number of steps of our algorithm. \allowbreak  \vspace{1em}
        
        Suppose that up the $i^{\text{th}}$ step of our algorithm, each $\underline{v}$ satisfies the criticality condition. We now aim to show that after removing $C_{i+1}$ to create $\underline{v}$, we still satisfy the criticality condition. To do this, we again induct on the number of tiles enclosed by cycle $C_{i+1}$. Suppose that $C_{i+1}$ enclosed a single tile $j$. Note that we cannot create any additional critical arrows if $j$ was enclosed in a previous cycle as $v_{_j} = 2$, so we suppose that $v_{_j} = 1$. Again, either 0 or 1 critical arrows were created with $(\underline{v}, \underline{d})$. In either case, this reduces to the argument made in the base case for $C_1$ with the only difference being that $v_{m} = 2$ for some vertices $m$. But note that this will not affect the analysis of creating critical arrows because the definition of critical arrows require that $v_{m} = 0,1$. \allowbreak  \vspace{1em}
        
        Suppose that $C_{i+1}$ encloses $k$ tiles and that the associated vector satisfies the criticality condition. Now, we aim to show that if $C_{i+1}$ encloses $k+1$ tiles, then the resulting vector $\underline{w}$ still satisfies the criticality condition. As in the base case above, note that we cannot create any additional critical arrows if $j$ was enclosed in a previous cycle as $v_{j} = 2$, so we suppose that $v_{j} = 1$. Similarly, note that even if some $v_{m} = 2$, this will not affect the analysis of creating critical arrows because the definition of critical arrows require that $v_{m} = 0,1$. So, this case reduces to the base case of the inductive step even if $w_{j-2} > 0$. \allowbreak  \vspace{1em}
        
        So, we see that by definition of the mixed dimer configuration $D$ being node-monochromatic that the criticality condition is satisfied by $\underline{v}$. By induction, enclosing $m$ tiles with $C_{i+1}$ results in $\underline{v}$ that satisfies Tran's condition (3). This gives that after each step of our algorithm, the resulting vector $\underline{v}$ must satisfy Tran's condition (3). Therefore, the resulting vector $\underline{e}$ must satisfy all of Tran's conditions. 

     \end{proof}
     
     We now show that these maps described in Theorems \ref{thm:EtoD} and \ref{thm:DtoE} are indeed inverses of one another. For an example of this, see that Examples \ref{ex:etoD} and \ref{ex:Dtoe} are inverses of each other. This completes the proof of Theorem \ref{thm:biject}.
     
     \begin{proof}
     In order to show that these maps are inverses of each other, first note that by definition of the algorithm in Theorem \ref{thm:EtoD}, we see that the number of steps in the algorithm exactly equals $|e|$, where $\underline{e}$ is the resulting vector associated to $D$. Therefore, it makes sense to induct on $|e|$.\allowbreak  \vspace{1em}
     
     For the base case, suppose that $|e| = 0$, i.e. we do not perform any steps of the algorithm in Theorem \ref{thm:EtoD}. Then, by conventions of both algorithms for Theorems \ref{thm:DtoE} and \ref{thm:EtoD}, we have that $\underline{e}$ is associated to $M_-$ and vice versa. \allowbreak  \vspace{1em}
     
     Suppose up to $k \in \nn$, when $|e| = k$, i.e. we perform $k$ steps of our algorithm in Theorem \ref{thm:EtoD}, these maps are inverses of one another. Let $\underline{e}$ such that $|e| = k+1$ where we added 1 to the $i^{\text{th}}$ entry of $\underline{e}'$ where $|e'| = |e|-1 = k$. Suppose that by Theorem \ref{thm:EtoD}, we associated the mixed dimer configuration $D'$ corresponding to $\underline{e}'$ and $D$ is the mixed dimer configuration associated to $\underline{e}$. Note that as $|e| = k+1$, we had to perform $k+1$ flips from $M_-$ to obtain $D$ and by assumption, we had to perform one more flip at tile $i$ to obtain $D$ from $D'$. \allowbreak  \vspace{1em}
     
     We aim the show that using the algorithm in Theorem \ref{thm:DtoE}, we have that our initial vector $\underline{e}$ is the vector associated to $D$. If we take the superimposition of $D \sqcup D'$, note that will consist of all 2-cycles and exactly one 4-cycle enclosing tile $i$. This implies that $D \sqcup M_-$ will also have at least one cycle containing $i$. Moreover, $D \sqcup M_-$ will have exactly one more cycle containing $i$ that that of $D' \sqcup M_-$. Hence, the corresponding vector obtained by performing the algorithm in Theorem \ref{thm:DtoE} must be $\underline{e}' + \underline{e_i}$ which is exactly $\underline{e}$. \allowbreak  \vspace{1em}
     
     Now suppose that given a mixed dimer configuration $D$, using the algorithm in Theorem \ref{thm:DtoE}, we obtain an $\underline{e}$ such that $|e|=k+1$. Let $D'$ be the mixed dimer configuration associated to $\underline{e}' = \underline{e} - \underline{e_i}$, i.e. subtracting 1 from the $i^{\text{th}}$ entry of $\underline{e}$. By the prescription of the algorithm in Theorem \ref{thm:DtoE}, we must have enclosed the tile $i$ $e_i$ number of times when looking at $D$ and $(e_i-1)$ number of 
     times when looking at $D'$. By our inductive assumption, let $\underline{e}'$ be associated to $D'$. Note that the order in which we flip tiles in the algorithm in Theorem \ref{thm:EtoD} does not matter, so to obtain the mixed dimer configuration $D$ from $D'$, it suffices to flip the tile $i$. Therefore, we see that the mixed dimer configuration associated to $\underline{e}$ prescribed by the algorithm in Theorem \ref{thm:EtoD} is exactly the mixed dimer configuration $D$ we began with.\allowbreak  \vspace{1em}
     
     Therefore, we see that these maps are indeed inverses of each other. Thus, our graphical model bijects to that of Tran's.
     \end{proof}
     \end{subsubsection}
     
     \begin{subsubsection}{Coefficients}
     To show that our model gives the exact $F$-polynomial associated to $\underline{d} \in \Phi_+$, all that is left to do is show that our model gives the right coefficients, i.e. show that the coefficient of $u_0^{e_0}\cdots u_{n-1}^{e_{n-1}}$ is $2^c$ where $c$ is the number of cycles in the mixed dimer configuration associated to $\underline{e}$. Note that when we refer to cycles, we insist the length of the cycle is larger than 2. To show our model yields the correct coefficients, we show that the number of cycles in $D$ corresponds to the number of connected components $C$ of $S$ with $\nu(C) = 0$. \allowbreak  \vspace{1em}
     
     First note that we cannot have a cycle on a tile that is a dimer configuration, i.e. we need $d_i = 2$ in order for $i$ to potentially be enclosed by a cycle. Also, note that there are no cycles in $M_-$ by the convention of the black and white coloring in the definition of $M_-$, so in order for tile $i$ to be enclosed by a cycle, we must have that $e_i \geq 1$. This tells us that the set of all tiles that can potentially be enclosed by a cycle are contained within $S = \{i \in Q~:~ (d_i,e_i) = (2,1)\}$.\allowbreak  \vspace{1em}
     
    Fix $D$ associated to $(\underline{d}, \underline{e})$. Let $C$ be a connected component of $S$. Suppose that $\nu(C) \neq 0$, that is, there is some $c \in C$ such that $c$ is the vertex of a critical arrow. Namely, this $c$ must be connected to a tile outside of $S$ by definition of the $d,e$ coordinates of the other vertex of the critical arrow. Let $c'$ be the other vertex of the critical arrow involving $c$. Note that if $c \to c'$ about the edge $\alpha$, then $(d_{c'}, e_{c'}) = (1,0)$ and we have that $n_{c,c'} = \max(d_c-d_{c'},0) + (e_{c'}-e_c) = 1-1=0$. If $c' \to c$ about the edge $\alpha$, then $(d_{c'}, e_{c'}) = (1,1)$ and we have that $n_{c',c} = \max(d_c'-d_{c},0) + (e_{c}-e_{c'}) = 0$. Therefore, by Lemma \ref{lemma:MijNij}, the edge $\alpha$ cannot have an edge from $D$ which tells us that no cycle in $D$ can enclose tile $c$. This tells us that if a cycle encloses any tile or collection of tiles in $C$, then $\nu(C) = 0$.\allowbreak  \vspace{1em}
 
    Now, suppose that $C$ is a connected component of $S$ with $\nu(C) = 0$, i.e. no critical arrows involve any vertex $c$ from $C$. Suppose that $C$ is comprised of the tiles $i, \dots, m$ where $i$ is the minimal tile in $C$ and $m$ is the maximal tile in $C$ with respect to the indexing in $Q$. We aim to show that there exists a cycle in $D$ enclosing all of $C$. Note that for $i \leq j \leq m-1$, we have that $(d_j,e_j) = (2,1)$, so there are no edges in $D$ on the edge straddling tiles $j$ and $j+1$ by Lemma \ref{lemma:MijNij} since $n_{j,j+1} = 0$. This implies that if we can show that all the boundary edges of $C$ are in $D$, that there is a cycle enclosing all of $C$ in $D$. \allowbreak  \vspace{1em}

    Note that as $e_j =1$ for all $i \leq j \leq m$, we have that each of the tiles in $C$ have been flipped exactly once from $M_-$ using our bijection of adding 1's to the $\underline{e}$-vector coinciding with flipping the corresponding tile from Theorem \ref{thm:DtoE}. Note that if a boundary edge $\alpha$ on tile $t$ is oriented black to white clockwise with respect to $G_2$, then by definition of $M_-$, $\alpha$ must have two edges distinguished in $M_-$. After one flip at tile $t$, $w(\alpha)$ decreases by 1. This gives that this edge now has exactly one edge in $D$. On the other hand, if $\alpha$ is oriented white to black clockwise with respect to $G_2$, then by definition of $M_-$, $\alpha$ has no edges distinguished in $M_-$. After performing a flip on tile $t$, we have that $w(\alpha)$ increases by 1. This gives that $\alpha$ now has exactly one edge distinguished in $D$. Hence, all the edges of the cycle enclosing $C$ that are on the boundary of $G$ have exactly one edge on them in $D$. \allowbreak  \vspace{1em}

    Now, we need to show that the cycle closes up on the interior edges of $G$, i.e. if $i > 0$ and/or $m < n-1$, we need to verify that the edges straddling tiles $i,i-1$ and $m,m+1$ have one edge distinguished in $D$. Note that as $i,m \in C$ with $\nu(C) = 0$, no arrow involving $i$ or $m$ is critical. By structure of $\underline{d} \in \Phi_+$, we have that $d_{i-1} \geq 1$ and $d_{m+1} \geq 1$. Seeking a contradiction, suppose that $n_{i, i-1} = 0$. Then, $\max(d_i-d_{i-1},0) = e_i-e_{i-1}$, i.e. $\max(2-d_{i-1},0) = 1-e_{i-1}$. Since $e_{i-1},d_{i-1} \in \{0,1,2\}$, this only occurs if either $d_{i-1} = 1$ giving that $e_{i-1} = 0$ or if $d_{i-1} = 2$ giving that $e_{i-1} = 1$. In the first case, this gives that the arrow $i \to i-1$ is critical which contradicts that $\nu(C) = 0$. In the second case, $i-1 \in C$ which contradicts the minimality of $i$. Hence, $n_{i,i-1} >0$ and by Lemma \ref{thm:DtoE}, this means there must be an edge straddling tiles $i, i-1$ in $D$. Note that this same argument holds for showing that there must be an edge in $D$ straddling tiles $m,m+1$. Therefore, we see that one cycle is formed around all of $C$ when $\nu(C) = 0$. Hence, the number of cycles in $D$ is exactly the number of connected components $C$ of $S$ with $\nu(C) = 0$. Therefore, the coefficients from our graphical model match those in Tran's.

    \begin{ex}
          \label{ex:coeff} Note that in Examples \ref{ex:etoD} and \ref{ex:Dtoe}, the mixed dimer configuration $D$ has a cycle enclosing the tile 2. From Example \ref{d6example}, we see the term in the $F$-polynomial associated to this dimer configuration $D$ has the coefficient of $2^1$.
    \end{ex} 
     \end{subsubsection}
\end{subsection}
\begin{subsection}{Direct Connection to Quiver Grassmannian}
Now that we have shown the connection between our model and that of Thao Tran, we can state the direct connection between our model and the quiver Grassmannian. 

\begin{cor}\label{thm: reptheory}
Let $Q$ be an acyclic quiver of type $D_n$ and $\underline{d}$ be a positive root in the root system. Let $M$ be the corresponding indecomposable representation of $Q$ of dimension $\underline{d}$. Then

$$\sum_{D \in P} 2^{\#\{\text{cycles in }D\}} y(D) = \sum_{\substack{\underline{e} \in \zz^n \text{ such that }\\ 0 \leq e_i \leq d_i}} \chi_{\underline{e}}(M) y_1^{e_1} \cdots y_n^{e_n}$$

\noindent where $\chi_{\underline{e}}(M)$ is the Euler Poincar\'e characteristic of the Quiver Grassmannian $Gr_{\underline{e}}(M)$.
\end{cor}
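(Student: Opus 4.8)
The plan is to deduce this corollary by combining the main theorem (Theorem \ref{thm:main}) with the classical description of $F$-polynomials as generating functions of Euler characteristics of quiver Grassmannians, so that essentially no new work is required.

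First I would recall that, for an acyclic quiver $Q$ and the indecomposable representation $M$ of dimension vector $\underline d$, the corresponding $F$-polynomial satisfies
$$F_{\underline d}(y_1,\dots,y_n) \;=\; \sum_{\underline e\,:\,0\le e_i\le d_i} \chi_{\underline e}(M)\, y_1^{e_1}\cdots y_n^{e_n},$$
under the identification of the formal $F$-polynomial variables $u_i$ with the $y_i$. This is the Caldero--Chapoton formula in Dynkin type \cite{caldero2006cluster}, in the form due to Derksen--Weyman--Zelevinsky \cite{DWZ2}; it is exactly the input used in Tran's proof of Theorem \ref{thm:Tran}, as noted in the remark following that theorem. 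Thus the right-hand side of the claimed identity is literally $F_{\underline d}$.

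Next I would invoke Theorem \ref{thm:main}, which rewrites $F_{\underline d}$ as $\sum_{D\in P} 2^{c(D)} u_0^{t_0(D)}\cdots u_{n-1}^{t_{n-1}(D)}$, with $t_i(D)$ the number of flips of tile $i$ used to produce $D$ from $M_-$ and $c(D)$ the number of face-enclosing cycles of $D$. It then remains only to match the two sides monomial by monomial, i.e. to verify that the configuration $D\in P$ contributing $2^{c(D)} u_0^{t_0(D)}\cdots u_{n-1}^{t_{n-1}(D)}$ on the left contributes $2^{c(D)} y(D)$ with $y(D)=y_1^{e_1(D)}\cdots y_n^{e_n(D)}$ on the right, where $\underline e(D)$ is the vector attached to $D$ by Theorem \ref{thm:biject}. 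This is immediate from the explicit form of that bijection: by Theorem \ref{thm:EtoD} the configuration associated to a vector $\underline e$ is obtained by flipping tile $i$ exactly $e_i$ times, so the flip-count vector of $D$ coincides with $\underline e(D)$, giving $t_i(D)=e_i(D)$ and hence $u_0^{t_0}\cdots u_{n-1}^{t_{n-1}} = y(D)$ after the substitution $u_i=y_i$.

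Finally I would assemble the pieces. For $\underline e$ lying in the image of the bijection of Theorem \ref{thm:biject} (equivalently, satisfying Tran's conditions), the monomial $y^{\underline e}$ appears on the left with coefficient $2^{c(D)}$ for the corresponding $D$, and on the right with coefficient $\chi_{\underline e}(M)$; these agree because $\chi_{\underline e}(M)=2^c$ with $c$ the number of connected components $C$ of $S$ satisfying $\nu(C)=0$ by Theorem \ref{thm:Tran}, and that number equals $c(D)$ by the coefficient-matching argument already carried out in the Coefficients subsection of Section \ref{sec:proof}. For the remaining $\underline e$ with $0\le e_i\le d_i$, Tran's conditions fail, so $\chi_{\underline e}(M)=0$ and $y^{\underline e}$ occurs on neither side. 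Summing over all $\underline e$ yields the identity. I do not expect a genuine obstacle here --- the mathematical content was already established en route to Theorem \ref{thm:main}; the only point needing care is aligning the range of the right-hand sum (over all $\underline e$ with $0\le e_i\le d_i$, including those with vanishing Euler characteristic) with the range of the left-hand sum (over $P$, i.e. over the vectors with nonvanishing coefficient), which is precisely what the bijection of Theorem \ref{thm:biject} controls.
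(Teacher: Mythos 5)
Your proposal is correct and follows essentially the route the paper intends: the paper states this corollary as an immediate consequence of Theorem \ref{thm:main} together with the fact (noted in the remark after Theorem \ref{thm:Tran}) that Tran's coefficients are exactly the Euler--Poincar\'e characteristics $\chi_{\underline{e}}(M)$ via the Caldero--Chapoton/Derksen--Weyman--Zelevinsky formula, with the monomial and coefficient matching supplied by Theorems \ref{thm:EtoD}, \ref{thm:DtoE} and the Coefficients subsection. Your only additions are to make explicit the identification $t_i(D)=e_i(D)$ and the vanishing of $\chi_{\underline{e}}(M)$ when Tran's conditions fail, both of which are exactly the intended reading.
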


Loosely speaking, an acceptable flip at tile $i$ corresponds to increasing the dimension of the vector space at vertex $i \in Q$ in a subrepresentation $N$ of $M$. Moreover, our notion of node-connectivity corresponds to avoiding conflicts in choices of 1-dimensional subspaces of 2-dimensional vector spaces. Paths between nodes of different colors indicate inconsistent choices of subspaces on the vertices in $N$. 
\end{subsection}
\end{section}

\begin{section}{Computation of $g$-vectors from our model}
\label{sec:$g$-vectors}

Since we have established that our dimer configuration model gives a graph-theoretic interpretation of the $F$-polynomial, it is natural to want an interpretation of the $g$-vector from our model. By \cite{ca4}, the data of $F$-polynomials and $g$-vectors is equivalent to the data of Laurent expansions of cluster variables. In particular, this data is in a sense richer because it yields the Laurent expansions of the cluster variables decorated with principal coefficients.\allowbreak  \vspace{1em}

To give the interpretation of the $g$-vector, we must establish an edge weighting on the minimal matching $M_-$ associated to some acyclic quiver $Q$ of type $D_n$ and a positive root $\underline{d} \in \Phi_+$. Note that this is a separate notion from the definition of ``weighted flip" given in Section \ref{defn:weightedflip}. Following the edge weights of \cite{ms}, we define the edge weights of the base graph associated to $Q$ as follows:

    \begin{enumerate}
        \item For any internal edge $e$, define $\text{wt}(e) = 1$.
        \item For any arrow $j \to i$, on tile $j$, weight a white to black clockwise with respect to $j$ boundary edge by $x_i$. On tile $i$, weight a black to white clockwise with respect to $i$ boundary edge by $x_j$.
        
        \item Otherwise, weight the edge 1. 
    \end{enumerate}
    
    \begin{rmk}
    In step 2 of the definition of the edge weights from above, there is an ambiguity. Namely if there are multiple black to white clockwise with respect to tile $j$ boundary edges $\alpha, \beta$ of $j$, we can choose either $\alpha$ or $\beta$ to assign weight of $x_i$. It turns out that this choice does not affect the edge-weighting of matchings given the proof of Theorem \ref{thm:gvec} below.
    \end{rmk}

Define the weight of the minimal matching $M_- = M_-(Q,\underline{d})$, denoted $\text{wt}(M_-)$, to be the product of the edge weights distinguished in $M_-$. Let's see this definition in the following example:

\begin{ex}
\label{ex:g-vec}
   Let $Q$ be the $D_6$ quiver in Example \ref{ex:etoD}. Then, we assign the following weights to the edges of the base graph:
   
   \begin{center}
       \includegraphics[scale=.35]{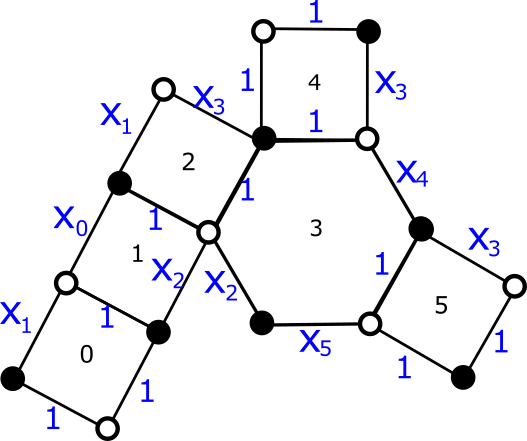}
   \end{center}
   
   If we consider the positive root $\underline{d} = (1,1,2,2,1,1)$, then we obtain the following minimal matching which has $\text{wt}(M_-) = x_1^3x_2^2x_3^2$.
   
   \begin{center}
       \includegraphics[scale=.35]{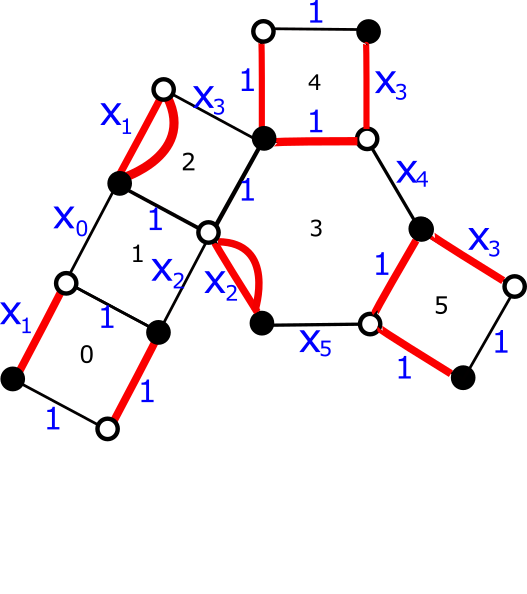}
   \end{center}
   
\end{ex}
 
With this choice of weighting, we have the following result:

\begin{thm}
\label{thm:gvec}
The $g$-vector associated to $Q$ and a positive root $\underline{d} \in \Phi_+$, denoted $\underline{g} = \underline{g}(Q, \underline{d})$, is given by 

$$\underline{g} = \deg\left(\frac{\text{wt}(M_-)}{\underline{x}^{\underline{d}}}\right)$$

\noindent where $\deg(x_0^{a_0}x_1^{a_1} \cdots x_{n-1}^{a_{n-1}}) := (a_0,a_1, \dots, a_{n-1})$.
\end{thm}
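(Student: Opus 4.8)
The plan is to reduce the statement to the known $g$-vector formula for type $D_n$ (equivalently, to a computation via Tran's model or via the combinatorics of positive roots) by showing that the weighted minimal matching $\text{wt}(M_-)$ already encodes the correct numerator data of the $x$-monomial $\underline{x}^{\underline g}$ attached to the cluster variable. Concretely, recall that the cluster variable with denominator vector $\underline d$ has Laurent expansion $x_{\underline d} = F_{\underline d}(\hat y)\,\underline x^{\underline g}$, so that $\underline g$ is determined by the ``constant term'' of $F_{\underline d}(\hat y)\underline x^{\underline g}$, i.e. by the monomial obtained from the minimal element $M_-$ of $P$. Since Theorem \ref{thm:main} identifies $M_-$ with the constant monomial $1$ of $F_{\underline d}$, the term of $x_{\underline d}$ corresponding to $M_-$ is exactly $\underline x^{\underline g}$. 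The strategy is therefore to show directly that $\text{wt}(M_-)/\underline x^{\underline d}$ equals $\underline x^{\underline g}$, by computing $\text{wt}(M_-)$ explicitly from the construction of $M_-$ in Section \ref{sec:min_match} and comparing with the classical formula for $g$-vectors of type $D_n$ cluster variables attached to positive roots.

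The first step is to nail down $\text{wt}(M_-)$ combinatorially. By construction, $M_-$ consists of $M_1$ (the black-to-white clockwise boundary edges of $G_1$) together with $M_2$ (the analogous boundary edges of $G_2$). All internal edges get weight $1$ by definition of the edge-weighting, so $\text{wt}(M_-)$ is a product over boundary edges that appear in $M_1 \sqcup M_2$. For a tile $i$ with $d_i \geq 1$, each arrow $i \to j$ or $j \to i$ incident to $i$ contributes (via the unique specially-weighted boundary edge on $j$, carrying weight $x_i$) to $\text{wt}(M_-)$ precisely when that boundary edge is black-to-white clockwise and hence selected; the multiplicity is $1$ if that tile is in $G_1 \setminus G_2$ and $2$ if it is in $G_2$. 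One then reads off that $\text{wt}(M_-) = \prod_i x_i^{m_i}$ where $m_i$ counts, with the $d$-weighted multiplicity, the arrows out of (or into, depending on orientation) vertex $i$ in $Q$ whose associated boundary edge lies on the boundary of $G_1$ resp.\ $G_2$. I would organize this as a lemma computing $m_i$ in terms of $Q$, $\underline d$, and the local orientation, using the same black/white bookkeeping already established in Lemmas \ref{lemma:MijNij} and \ref{lemma:MinfinityN} (indeed, $\text{wt}(M_-)$ is essentially a repackaging of the edge-multiplicities $n_{i,j}$ and $n_{i,\infty}$ at $\underline e = 0$, now decorated with the variable $x_i$).

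The second step is to match this against the classical $g$-vector. Here I would invoke the known description (e.g.\ via the $c$- and $g$-vector formulas for acyclic type $D_n$, or via Tran's setup which already computes Laurent expansions) that the $g$-vector of the cluster variable with denominator $\underline d$ is $\underline g = \underline m - \underline d$ where $\underline m$ is exactly the exponent vector of the ``minimal'' numerator monomial. Since we showed $\text{wt}(M_-) = \underline x^{\underline m}$ with this same $\underline m$, we conclude $\underline g = \deg(\text{wt}(M_-)/\underline x^{\underline d})$. Alternatively — and this is the cleaner route if the paper wants a self-contained argument — apply the separation-of-additions formula (Theorem \ref{thm:sep_add}) together with Theorem \ref{thm:main}: the term of $F_{\underline d}(\hat y_1,\dots,\hat y_n)\,\underline x^{\underline g}$ coming from $D = M_-$ (the monomial $1$ in $F_{\underline d}$) is literally $\underline x^{\underline g}$, while the corresponding term in the dimer expansion of the cluster variable (weighting each matching by its edge-product divided by $\underline x^{\underline d}$, in the standard snake/plabic-graph fashion) is $\text{wt}(M_-)/\underline x^{\underline d}$; equating gives the result.

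The main obstacle I anticipate is justifying that the edge-weighted generating function $\sum_{D \in P}\text{wt}(D)/\underline x^{\underline d}$ (with appropriate powers of $2$ for cycles) genuinely equals the cluster variable $x_{\underline d}$, rather than merely having the right $F$-polynomial after the substitution $\hat y_i = x_{n+i}\prod x_j^{b_{ji}}$. In other words, one must check that a single weighted flip of tile $i$ multiplies $\text{wt}(D)$ by exactly $\hat y_i$ (equivalently, that the local weight change across a flip is $\prod_{j\to i} x_j \big/ \prod_{i \to j} x_j$, matching $\hat y_i$ up to the principal-coefficient variables which the weighting suppresses since we are not tracking them). This is a local computation at a single tile, comparing the two matchings of a square (or the double-flip of a tile) against the edge-weights defined in Section \ref{sec:g-vectors}; the orientation conventions (``see white on the right'') are rigged precisely to make this work, and the ambiguity flagged in the Remark (which boundary edge of $j$ carries $x_i$) drops out because distinct matchings of a single tile differ only in internal edges of weight $1$. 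Once this local compatibility is verified, an induction on $|\underline e|$ along allowable flips — exactly the induction scheme already used in the proofs of Theorems \ref{thm:EtoD} and \ref{thm:DtoE} — upgrades it to the global statement $\sum_{D\in P}\text{wt}(D) = \text{wt}(M_-)\cdot F_{\underline d}(\hat y)$, and dividing by $\underline x^{\underline d}$ and extracting degrees yields $\underline g = \deg(\text{wt}(M_-)/\underline x^{\underline d})$.
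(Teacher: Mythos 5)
Your primary route is essentially the paper's own proof: the paper also establishes Theorem \ref{thm:gvec} by quoting Tran's $g$-vector formula (\cite[Theorem 4.4]{tran}), rewriting it as the degree of a monomial ratio, and observing that by the construction of $M_-$ and the edge-weighting the numerator monomial is exactly $\text{wt}(M_-)$ (each arrow incident to a tile $j$ in the support contributes its weighted boundary edge with multiplicity $d_j$, precisely the $n_{i,\infty}$ data at $\underline e = 0$ that you point to). One caution about your ``cleaner, self-contained'' alternative: as sketched it is circular. The local flip computation plus induction only yields the generating-function identity $\sum_{D\in P}\mathrm{wt}(D)\,y(D) = \mathrm{wt}(M_-)\,F_{\underline d}(\hat y)$; to extract $\underline g$ from this you must already know that this sum equals the cluster variable $F_{\underline d}(\hat y)\,\underline x^{\underline g}\,$ (equivalently, that the $M_-$-term of the dimer expansion is $\underline x^{\underline g}$), which is exactly the content being proved. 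Closing that loop requires an independent identification of the Laurent expansion — e.g.\ the Caldero--Chapoton/Tran expansion with principal coefficients — at which point you are back to the first route; indeed the paper proves its Expansion Formula (Theorem \ref{thm:expansion}) only after, and using, Theorem \ref{thm:gvec}.
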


Before proving this result, we return to Example \ref{ex:g-vec}. 

\begin{ex}
      Through direct computation, the associated $g$-vector is given by \newline $(-1,2,0,0,-1,-1)$ and using Theorem \ref{thm:gvec}, we see that
      $$\deg\left(\frac{\text{wt}(M_-)}{\underline{x}^{\underline{d}}}\right) = \deg\left(\frac{x_1^3x_2^2x_3^2}{x_0x_1x_2^2x_3^2x_4x_5}\right) = \deg(x_0^{-1}x_1^2x_2^0x_3^0x_4^{-1}x_5^{-1}) = (-1,2,0,0,-1,-1).$$
      
\end{ex}

\begin{proof}
Theorem 4.4 in \cite{tran} gives an interpretation of the $g$-vector in terms of the data $(\underline{e}, \underline{d})$ where $\underline{e}$ is a vector that again satisfies conditions (1)-(3) in Theorem \ref{thm:Tran}. Namely it says that given a classical type acyclic quiver $Q$, the  $\underline{g}$-vector $\underline{g}_{\underline{d}}$ corresponding to $\underline{d} \in \Phi_+$ is

$$-\sum_{i=1}^n d_i \underline{e_i} + \sum_{i,j=1}^n d_i \max(-b_{ji}, 0) \underline{e_j}$$
where $b_{ji}$ denotes the $(j,i)^{\text{th}}$ entry of the exchange matrix associated to the quiver $Q$. Note that we can re-express this statement for the $\underline{g}-$vector in terms of the quiver $Q$ and the positive root $\underline{d} \in \Phi_+$:

$$\underline{g} = \deg\left(\frac{\displaystyle \prod_{i \in Q_0} \prod_{j \to i} x_j^{d_i}}{\underline{x}^{\underline{d}}}\right).$$

This is because the denominator of the above expression coincides with Tran's term $-\sum_{i=1}^n d_i \underline{e_i}$ and since we weight the black to white clockwise orientation with $x_i$ with $j \to i$ which coincides with Tran's term $\sum_{i,j=1}^n d_i \max(-b_{ji}, 0) \underline{e_j}$. Hence, we can re-express Tran's Theorem 4.4 in this way. Moreover, by definition of $M_-$, this expression is the same as 

$$\underline{g} = \deg\left(\frac{\text{wt}(M_-)}{\underline{x}^{\underline{d}}}\right).$$
\end{proof} 

\begin{cor}
\label{cor:laurent}
Theorem \ref{thm:main} and Theorem \ref{thm:gvec} give the full Laurent expansions of cluster variables. Namely, for an acyclic quiver $Q$ of type $D_n$, and positive root $\underline{d} \in \Phi_+$, we have that 

$$x_{Q, \underline{d}} = F_{\underline{d}}(\hat{y_0}, \hat{y_1}, \dots, \hat{y_{n-1}})x_0^{g_0}x_1^{g_1}\cdots x_{n-1}^{g_{n-1}},$$
where $\hat{y_i} = y_i \prod_{j=0}^{n-1} x_j^{-b_{ji}^0}$.
\end{cor}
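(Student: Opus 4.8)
The plan is to assemble the two main results of the paper and feed them into the defining identity for $F$-polynomials and $g$-vectors. Recall from Section \ref{sec:background}---either directly from the definition of $F$-polynomial and $g$-vector, or as the specialization of the separation-of-additions formula (Theorem \ref{thm:sep_add}) to the principal-coefficient cluster algebra $\mathcal{A}_\bullet(B^0,t_0)$---that every cluster variable factors as
$$x_{\ell;t} = F_{\ell;t}^{B^0;t_0}(\hat{y}_1,\dots,\hat{y}_n)\, x_1^{g_1}\cdots x_n^{g_n}.$$
First I would specialize this identity to the cluster variable $x_{Q,\underline{d}}$ whose denominator vector is $\underline{d}\in\Phi_+$; its existence and uniqueness is exactly the root-to-cluster-variable bijection of \cite{ca2} recalled earlier. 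Then I would substitute the combinatorial expression for $F_{\underline{d}}$ supplied by Theorem \ref{thm:main} in place of $F_{\ell;t}$, and the combinatorial expression $\underline{g} = \deg\!\big(\mathrm{wt}(M_-)/\underline{x}^{\underline{d}}\big)$ supplied by Theorem \ref{thm:gvec} in place of $(g_1,\dots,g_n)$. This yields precisely the asserted formula for $x_{Q,\underline{d}}$.

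The only content beyond citing the two theorems is to confirm that the hatted variables $\hat{y}_i = y_i \prod_{j} x_j^{-b_{ji}^0}$ appearing in the statement coincide with the hatted variables $\hat{y}_j = x_{n+j}\prod_i x_i^{b_{ij}^0}$ from the definition of $F$-polynomials. Under the identification $y_i = x_{n+i}$ of the principal-coefficient variables, together with the skew-symmetry $b_{ij}^0 = -b_{ji}^0$ of $B^0$ (valid since type $D_n$ is simply laced and $Q$ is an honest quiver), these two expressions agree. Since Theorem \ref{thm:gvec} was itself established by matching our edge weighting to Tran's Theorem~4.4 of \cite{tran}, and since the quiver convention ``$i \to j$ iff $b_{ij}^0>0$'' was used consistently in defining both the base graph and its weighting, no further reconciliation of conventions is needed; I would double-check this against the running $D_6$ example of Example \ref{ex:g-vec} as a sanity check.

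I do not expect a genuine obstacle here: the corollary is a formal consequence of Theorems \ref{thm:main} and \ref{thm:gvec} together with the definition of $F$-polynomials and $g$-vectors. The one thing worth stating carefully in the write-up is which exchange-matrix sign convention is in force, so that the exponents $-b_{ji}^0$ in $\hat{y}_i$ are unambiguous. As a closing remark, combining this Laurent expansion with Theorem \ref{thm:pos} re-derives the nonnegativity and integrality of its coefficients, which is already visible in our rank-generating function over $P$ through the bound $2^c \ge 1$.
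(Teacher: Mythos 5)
Your proposal is correct and is essentially the argument the paper intends: the corollary is stated as a formal consequence of Theorem \ref{thm:main}, Theorem \ref{thm:gvec}, and the defining identity $x_{\ell;t} = F_{\ell;t}(\hat{y}_1,\dots,\hat{y}_n)\,x_1^{g_1}\cdots x_n^{g_n}$ (equivalently Theorem \ref{thm:sep_add} with principal coefficients), and the paper gives no further proof. Your reconciliation of $\hat{y}_i = y_i\prod_j x_j^{-b_{ji}^0}$ with the earlier convention via skew-symmetry and $y_i = x_{n+i}$ is exactly the only point that needed checking.
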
 

To see what the Laurent expansions for the cluster variables are, let's look at an example.
\begin{ex}
\label{ex:laurent}
Let $\underline{d} = (1,1,2,1,1) \in \Phi_+$ and let $Q$ be
\begin{center}
\begin{tikzpicture}
\node at (0,0) (0){$0$};
\node at (1,0) (1){$1$};
\node at (2,0) (2){$2$};
\node at (3,.5) (3){$3$};
\node at (3,-.5) (4){$4$};
\draw[->, thick] (1) -- (0);
\draw[->, thick] (2) -- (1);
\draw[->, thick] (3) -- (2);
\draw[->, thick] (2) -- (4);
\end{tikzpicture}
\end{center}
Then our $F$-polynomial is given by
 \begin{align*}
            F_{\underline{d}} &= 1+u_0+u_4+u_0u_1+u_0u_4+u_2u_4+u_0u_1u_4+ u_0u_1u_2+u_0u_2u_4\\
            &+2u_0u_1u_2u_4+u_0u_1u_2u_3u_4 + u_0u_1u_2^2u_4+ u_0u_1u_2^2u_3u_4
    \end{align*}
and our $g$-vector is $\underline{g} = (-1,0,0,-1,-1)$. In order to get our cluster variables, we evaluate the $F$-polynomial at $\hat{y_i}$ and then multiply by $\underline{x}^{\underline{g}}$. Note that 

$$\hat{y_0} = y_0x_1^{-1}~~~,~~~ \hat{y_1} = y_1x_0x_2^{-1}~~~,~~~\hat{y_2} = y_2x_1x_3^{-1}x_4~~~,~~~\hat{y_3} = y_3x_2~~~,~~~\hat{y_4} = y_4x_2^{-1}$$

This gives that the $F$-polynomial is

 \begin{align*}
            F_{\underline{d}}(\hat{y_0}, \hat{y_1}, \hat{y_2}, \hat{y_3}, \hat{y_4}) &= 1+y_0x_1^{-1}+y_4x_2^{-1}+\frac{y_0y_1x_0}{x_1x_2}+\frac{y_0y_4}{x_1x_2}+\frac{y_2y_4x_1x_4}{x_2x_3}+\frac{y_0y_1y_4x_0}{x_1x_2^2}\\
            &+ \frac{y_0y_1y_2x_0x_4}{x_3}+\frac{y_0y_2y_4x_4}{x_2}
            +\frac{2y_0y_1y_2y_4x_0x_4}{x_2^2x_3}+\frac{y_0y_1y_2y_3y_4x_0x_4}{x_2x_3}\\
            &+ \frac{y_0y_1y_2^2y_4x_0x_1x_4^2}{x_2^2x_3^2}+ \frac{y_0y_1y_2^2y_3y_4x_0x_1x_4^2}{x_2x_3^2}.
    \end{align*}
Therefore, the cluster variable is given by 
\begin{align*}
x_{\underline{d}} &= \frac{x_3}{x_0x_4}(1+\frac{y_0}{x_1}+\frac{y_4}{x_2}+\frac{y_0y_1x_0}{x_1x_2}+\frac{y_0y_4}{x_1x_2}+\frac{y_2y_4x_1x_4}{x_2x_3}+\frac{y_0y_1y_4x_0}{x_1x_2^2}\\
&+ \frac{y_0y_1y_2x_0x_4}{x_2x_3}+\frac{y_0y_2y_4x_4}{x_2x_3} +\frac{2y_0y_1y_2y_4x_0x_4}{x_2^2x_3}+\frac{y_0y_1y_2y_3y_4x_0x_4}{x_2x_3}\\
&+ \frac{y_0y_1y_2^2y_4x_0x_1x_4^2}{x_2^2x_3^2} + \frac{y_0y_1y_2^2y_3y_4x_0x_1x_4^2}{x_2x_3^2} ).
\end{align*}

The cluster variable's Laurent expansion can be seen term by term using the edge weighting in the poset in Figure \ref{fig:laurent}. 

\end{ex}

This example is made precise through the following formula mimicking \cite{mswpos}.
\begin{thm}[Expansion Formula]
\label{thm:expansion}
Let $D$ be the mixed dimer configuration in $P$ obtained from flipping tiles $t_1, \dots, t_\ell$ from $M_-$. Define the height monomial of $D$ by 

$$y(D) = \prod_{i=1}^\ell y_i$$
i.e. $y(D)$ is the product of all $y_i$ for which a flip occurred at tiles $i$ to obtain $D$ from $M_-$. Define $x(D) = \prod_{e \in D} w(e)$ to be the weight of the dimer configuration $D$. Then the cluster variable associated to $(Q, \underline{d})$ is given by 

$$x_{Q, \underline{d}} = \frac{1}{\underline{x}^{\underline{d}}} \sum_{D \in P} x(D) y(D).$$

\end{thm}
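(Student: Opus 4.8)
The plan is to derive this expansion as a formal consequence of Corollary~\ref{cor:laurent}, Theorem~\ref{thm:main}, and Theorem~\ref{thm:gvec}, where the only genuine content is a monomial bookkeeping already encoded in Lemmas~\ref{lemma:MijNij} and~\ref{lemma:MinfinityN}. By Corollary~\ref{cor:laurent}, $x_{Q,\underline d} = F_{\underline d}(\hat y_0,\dots,\hat y_{n-1})\,x_0^{g_0}\cdots x_{n-1}^{g_{n-1}}$, and substituting the expansion of $F_{\underline d}$ from Theorem~\ref{thm:main} gives
\[
x_{Q,\underline d} \;=\; \underline x^{\underline g}\sum_{D\in P} 2^{c(D)}\prod_{i=0}^{n-1}\hat y_i^{\,t_i(D)},
\]
where $t_i(D)$ is the number of flips at tile $i$ (counted with multiplicity) producing $D$ from $M_-$ and $c(D)$ is the number of enclosed cycles of $D$. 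Since $\hat y_i = y_i\prod_{j}x_j^{-b_{ji}^0}$, each summand splits as $\big(\prod_i y_i^{t_i(D)}\big)\big(\prod_i\prod_j x_j^{-b_{ji}^0 t_i(D)}\big)$, and the first factor is exactly the height monomial $y(D)$. Hence it suffices to establish the Laurent-monomial identity
\[
\underline x^{\underline g}\,\prod_{i=0}^{n-1}\prod_{j=0}^{n-1} x_j^{-b_{ji}^0\, t_i(D)} \;=\; \frac{x(D)}{\underline x^{\underline d}} \qquad\text{for every }D\in P ,
\]
with $x(D)=\prod_{e\in D}w(e)$; the surviving factor $2^{c(D)}$ then matches the cycle normalization under which $x(D)$ is read in the statement (equivalently, one replaces $x(D)$ by $2^{c(D)}\prod_{e\in D}w(e)$, consistent with Theorem~\ref{thm:main}).

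To prove the monomial identity, recall that the bijection of Section~\ref{sec:proof} identifies $D\in P$ with the tuple $\underline e=(t_0(D),\dots,t_{n-1}(D))$, so $D$ is obtained by performing $t_i(D)$ flips at each tile $i$, and $x(D)=\prod_{\alpha} w(\alpha)^{m_\alpha}$ where $m_\alpha$ is the multiplicity of the edge $\alpha$ in $D$. By Lemma~\ref{lemma:MijNij} the multiplicity of an internal edge between tiles $i$ and $j$ differs from its value in $M_-$ by $t_j(D)-t_i(D)$; since internal edges carry weight $1$, they contribute nothing to $x(D)/\mathrm{wt}(M_-)$. By Lemma~\ref{lemma:MinfinityN} the multiplicity of a boundary edge $\alpha$ on tile $i$ differs from its value in $M_-$ by $-t_i(D)$ if $\alpha$ is oriented ``$i\to\infty$'' (i.e.\ black-to-white clockwise on tile $i$) and by $+t_i(D)$ if it is oriented ``$\infty\to i$''. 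By the edge-weighting rules of Section~\ref{sec:g-vectors}, the boundary edges of tile $i$ weighted by something other than $1$ are precisely one black-to-white edge weighted $x_k$ for each arrow $k\to i$ and one white-to-black edge weighted $x_k$ for each arrow $i\to k$. Multiplying over all $i$,
\[
\frac{x(D)}{\mathrm{wt}(M_-)} \;=\; \prod_{i=0}^{n-1}\Big(\prod_{k\to i} x_k^{-1}\;\prod_{i\to k} x_k\Big)^{t_i(D)} \;=\; \prod_{i,j} x_j^{-b_{ji}^0\, t_i(D)} .
\]
Since $\mathrm{wt}(M_-)=x(M_-)$ by definition and $\underline x^{\underline g}=\mathrm{wt}(M_-)/\underline x^{\underline d}$ by Theorem~\ref{thm:gvec} (an equality of monomials, as $\mathrm{wt}(M_-)$ is a monomial), this is exactly the required identity.

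Putting everything together, the $D$-term of $\underline x^{\underline g}F_{\underline d}(\hat y)$ equals $2^{c(D)}\,y(D)\,x(D)/\underline x^{\underline d}$, so summing over $D\in P$ and absorbing $2^{c(D)}$ into $x(D)$ as above yields $x_{Q,\underline d}=\underline x^{-\underline d}\sum_{D\in P}x(D)y(D)$. This is consistent with Example~\ref{ex:laurent} and the edge-weighted poset of Figure~\ref{fig:laurent}.

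The main obstacle is the middle step: one must reconcile the orientation convention ``$i\to\infty$ / $\infty\to i$ about $\alpha$'' from Lemma~\ref{lemma:MinfinityN} with ``black-to-white / white-to-black clockwise on tile $i$'' and with the ``white on the right'' convention that places $x_k$ on the correct boundary edge, and then check that this correspondence is uniform across all tile types --- interior square tiles with each neighbor orientation, the trivalent hexagon at $n-3$, and the terminal tiles $0$, $n-2$, $n-1$ --- as well as in the doubled case $d_i=2$, where a flip is performed twice. One must also observe that the choice ambiguity noted in the Remark following the weighting definition is harmless, which it is since only the \emph{product} of the edge weights enters $x(D)$. Everything else is formal manipulation of the already-established Theorems~\ref{thm:main} and~\ref{thm:gvec} and Lemmas~\ref{lemma:MijNij} and~\ref{lemma:MinfinityN}.
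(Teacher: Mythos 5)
Your proof is correct, and it rests on the same two pillars as the paper's own argument: Corollary \ref{cor:laurent} (separation of additions combined with Theorem \ref{thm:main}) and the identity $\underline{x}^{\underline{g}} = \text{wt}(M_-)/\underline{x}^{\underline{d}}$ from Theorem \ref{thm:gvec}. The execution differs in a useful way. The paper proceeds by induction on the number of flips and simply asserts that a flip at tile $i$ divides the edge-weight monomial by $\prod_{j \to i} x_j$ and multiplies it by $\prod_{i \to \ell} x_\ell$, i.e.\ rescales it by $\hat{y}_i/y_i$; you instead get the cumulative change in closed form from the multiplicity formulas of Lemmas \ref{lemma:MijNij} and \ref{lemma:MinfinityN} (internal edges carry weight $1$, each black-to-white boundary edge of tile $i$ loses $t_i(D)$ copies and each white-to-black one gains $t_i(D)$), which both makes the per-flip assertion rigorous and explains in passing why the choice ambiguity noted after the edge-weighting definition is harmless, since only the product of weights over boundary edges of a fixed orientation on tile $i$ enters $x(D)/\text{wt}(M_-)$. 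Your route also makes explicit the coefficient $2^{c(D)}$ inherited from Theorem \ref{thm:main}: as you observe, the displayed formula reproduces the cluster variable only if this factor is carried along (as it is in Corollary \ref{thm: reptheory}), a point the theorem's statement and the paper's inductive proof leave implicit. In substance the two arguments are the same bookkeeping; yours is organized globally via the already-proven lemmas rather than flip-by-flip, and is the more airtight of the two.
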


 \begin{figure}[H]
    \centering
        \includegraphics[scale=.22]{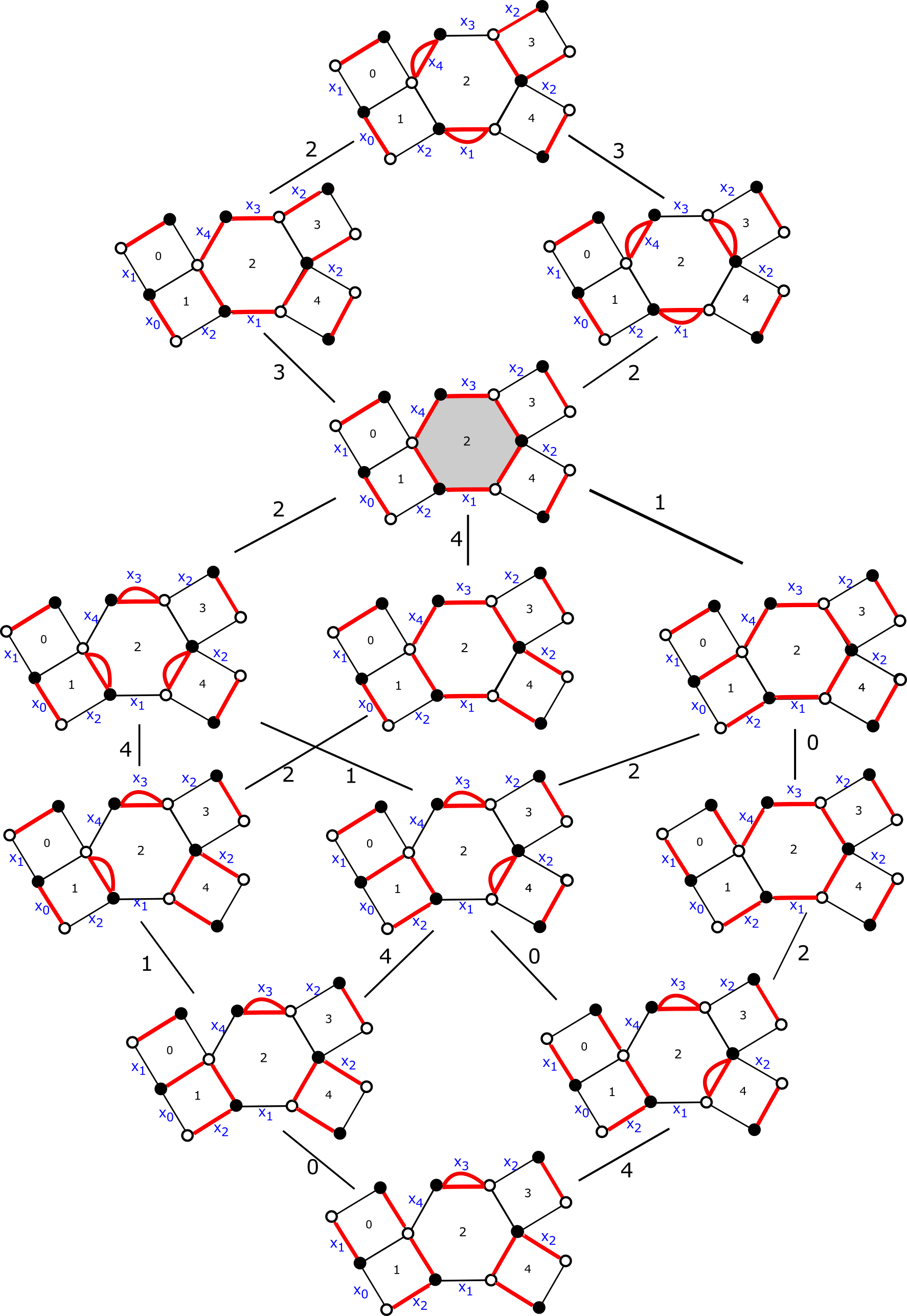}
        \caption{ }
        \label{fig:laurent}
    \end{figure}

\begin{proof}
We proceed by induction on $k = |d|$, the number of allowable flips from $M_-$ in the poset $P$. When $k=0$, our poset has one element, $M_-$. The associated $F$-polynomial is 1 and $\underline{x}^{\underline{g}} = w(M_-)$ and our expansion holds. 
Assume by induction that our claim holds up to $k \in \nn$. We aim to show that if $k+1$ flips occurred from $M_-$ i.e. $|\underline{d}| = k+1 $ to obtain the maximal mixed dimer configuration $D'$, then our expansion formula holds. It suffices to show that the term in the expansion associated to $D'$ is $\frac{1}{\underline{x}^{\underline{d}}}x(D')y(D')$. By definition of the height monomial $y(\cdot)$, we have that $y_i$'s account for flipping at each $d_i>0$ which gives that the $y_i$'s are indeed $y(D') = \underline{y}^{\underline{d}}$. Moreover, if the $k+1^{\text{st}}$ flip occurred at tile $i$, we have the weighting of the $x$'s is changed by dividing by $x_j$ where $j \to i$ in $Q$ and multiplying by $x_\ell$ when $i \to \ell$ in $Q$. Hence, the $x$ term in the term corresponding to $D'$ is indeed $x(D')$. Therefore, we have that 
 
 $$x_{Q, \underline{d}} = \frac{1}{\underline{x}^{\underline{d}}} \sum_{D \in P} x(D) y(D).$$
 \end{proof}
 
\end{section}

\begin{section}{Future Directions}
\label{sec:future}
In terms of future directions, there are a few possible avenues. For example, we would like to extend our model beyond the acyclic case. If we were to not rely on Tran's model for our proof, which requires that our quiver be acyclic, then we could potentially try to extend our dimer configuration model to apply beyond the acyclic case. Perhaps there is a proof of our model using recurrences of the $F$-polynomial or representation theory that we could utilize to use this model in less restrictive settings.\allowbreak  \vspace{1em}

Another possible direction would be to try to find a connection between our dimer configuration model and to the punctured disk model with a possible comparison to formulas for tagged arcs from \cite{mswpos}. The punctured disk model is the surface model for type $D$ cluster algebras. When working with surfaces with punctures, i.e. marked points in the interior of the surface, we can use the notion of tagged arcs in a triangulation which can then be translated into $\gamma$-symmetric matchings of an associated snake graph. We suspect that there is a way we can use a double dimer configuration model via an ``unfolding" method, which is similar to our methods from this paper, to give the connection between double dimer configurations and the surface model. In addition to this question, a paper from 2020 simplifies and unifies the approaches of \cite{mswpos} by considering ``good matchings of loop graphs" \cite{wilson2020}. These loop graphs add additional structure to the snake graphs in \cite{mswpos} in order to provide a framework to write expansion formulae for the variables corresponding to any tagged arc in a surface. With this new machinery, we wonder if using this additional structure on our dimer configurations could help find the precise connection to the punctured disk model or yield any other interesting generalizations. \allowbreak  \vspace{1em}

We also believe our interpretation is of independent combinatorial interest for the case of type $D_n$ cluster algebras, especially since these arise as cases of the recently investigated cluster structures of Schubert varieties \cite{serhiyenko2019cluster}. This new model for cluster variables has the potential to be extended to other cluster algebras - potentially shedding light on cluster structures on various Grassmannians. Namely, we believe that this mixed dimer configuration approach can improve our understanding of certain cluster variables
that arise as quadratic differences in terms of Pl\"ucker coordinates.  In particular, it is of interest to see if we could embed our base graphs into the disk to create plabic graphs, and relate them to cluster algebras from Schubert varieties.  This may dovetail with recent work in progress of Moriah Elkin \cite{Elkin-UROP}, which has been followed up by joint work in progress with the two authors \cite{EMW}. We have conjectural double and triple dimer interpretations for Laurent expansions associated to cluster variables in cluster algebras associated to Grassmannians of types $Gr(3,6)$, $Gr(3,7)$, and $Gr(3,8)$.

\end{section}

\newpage

\bibliographystyle{alpha}
\bibliography{bibliography}

\end{document}